\newtheorem{theorem}{Theorem}[section]
\newtheorem{proposition}[theorem]{Proposition}
\theoremstyle{definition}
\newtheorem{definition}[theorem]{Definition}
\theoremstyle{remark}
\newtheorem{remark}[theorem]{Remark}
\begin{document}

\title[A lower bound of the crossing number of composite knots]
{A lower bound of the crossing number of composite knots}

\author{Ruifeng Qiu}
\address{School of Mathematical Sciences, Key Laboratory of MEA(Ministry of Education) \& Shanghai Key Laboratory of PMMP, East China Normal University, Shanghai 200241, China}
\email{rfqiu@math.ecnu.edu.cn}

\author{Chao Wang}
\address{School of Mathematical Sciences, Key Laboratory of MEA(Ministry of Education) \& Shanghai Key Laboratory of PMMP, East China Normal University, Shanghai 200241, China}
\email{chao\_{}wang\_{}1987@126.com}

\subjclass[2020]{Primary 57K10; Secondary 57K30}

\keywords{crossing number, composite knot, handle structure, normal surface}

\thanks{The authors were supported by National Natural Science Foundation of China (NSFC), grant Nos. 12131009 and 12371067, and Science and Technology Commission of Shanghai Municipality (STCSM), grant No. 22DZ2229014.}

\begin{abstract}
Let $c(K)$ denote the crossing number of a knot $K$, and let $K_1\#K_2$ denote the connected sum of two oriented knots $K_1$ and $K_2$. As a famous old question in knot theory, whether $c(K_1\#K_2)=c(K_1)+c(K_2)$ is still unsolved. In this paper, we show that for any nontrivial knot $K=K_1\#\cdots\#K_n$, where $K_1,\ldots,K_n$ are oriented knots, the following inequality holds
\[c(K)>\frac{1}{16}(c(K_1)+\cdots+c(K_n)).\]
The result improves a well-known lower bound of $c(K)$ given by Lackenby.
\end{abstract}

\date{}
\maketitle

\tableofcontents


\section{Introduction}\label{sec:Intro}
Let $c(K)$ denote the crossing number of a knot $K$, and let $K_1\#K_2$ denote the connected sum of two oriented knots $K_1$ and $K_2$. A famous old question in knot theory asks whether $c(K_1\#K_2)=c(K_1)+c(K_2)$ always holds. By the definition of $K_1\#K_2$, it is easy to see that $c(K_1\#K_2)\leq c(K_1)+c(K_2)$ always holds. However, it seems that the inequality $c(K_1\#K_2)\geq c(K_1)+c(K_2)$ is quite hard to establish. At present, we know that it is true for some special classes of knots, including the alternating knots \cite{Ka, Mu, Th1}, its generalization the adequate knots and certain related ones \cite{LT, Th2, KL, KM}, and the zero deficiency knots \cite{Di}, which contains the torus knots. For the general knots, two kinds of lower bounds of $c(K_1\#K_2)$ in terms of $c(K_1)$ and $c(K_2)$ have been obtained in \cite{La} and \cite{It}, respectively, where the bound in \cite{La} is linear, while the bound in \cite{It} depends on the braid indices of $K_1$ and $K_2$. The latter is better than the former if the braid indices of $K_1$ and $K_2$ are small. On the other hand, in \cite{La}, it is actually proved that
\[c(K_1\#\cdots\#K_n)\geq\frac{1}{152}(c(K_1)+\cdots+c(K_n)),\]
where $K_1,\ldots,K_n$ are oriented knots. The aim of the present paper is to obtain a better lower bound of this kind, and this bound also improves the results in \cite{It} in certain cases. Our main result is the following.

\begin{theorem}\label{thm:main}
For a nontrivial knot $K=K_1\#\cdots\#K_n$, where $K_1,\ldots,K_n$ are oriented knots, the following inequality holds
\[c(K)>\frac{1}{16}(c(K_1)+\cdots+c(K_n)).\]
\end{theorem}

The proof of Theorem~\ref{thm:main} uses a strategy similar to the one in \cite{La}. However, to obtain the number $16$, which is much smaller than $152$, we need to change the basic settings and develop various new techniques. The ideas and results in \cite{La} are also used in \cite{La2} to show that for a satellite knot $K$ with companion knot $L$, the inequality $c(K)\geq c(L)/10^{13}$ always holds. Now, by using the methods in this paper, the results in \cite{La2} can also be improved, but the number that can replace $10^{13}$ will still be very large, while a main goal in this case is to prove $c(K)\geq c(L)$; see Problem~1.67 in Kirby's problem list \cite{Ki}. Also, note that the knot $K_1\#K_2$ is a satellite knot with companion knot $K_1$, and whether $c(K_1\#K_2)\geq c(K_1)$ always holds is still unknown. We believe that the methods in this paper will have further applications to such questions related to knot diagrams.

Below we provide the outline of the proof of Theorem~\ref{thm:main} and the organization of the paper. One can compare the outline with the subsequent sections, where we will give the detailed explanation. We work in the piecewise-linear category. Some elementary facts on 3-manifolds and knots are needed; see \cite{He,Li,Mo,Ro}.

\vspace{8pt}

\noindent {\bf Outline of the proof.} We can assume that all $K_i$ are nontrivial and prime, and $n>1$. We regard $K$ as a knot in the 3-dimensional sphere $S^3=\mathbb{R}^3\cup\{\infty\}$.

According to the nontrivial prime decomposition of $K$, there are $n-1$ pairwise disjoint decomposition spheres in $S^3$. By cutting $S^3$ along these spheres, we obtain 3-manifolds $M_1,\ldots,M_n$, which correspond to $K_1,\ldots,K_n$ as follows.

\begin{figure}[h]
\includegraphics{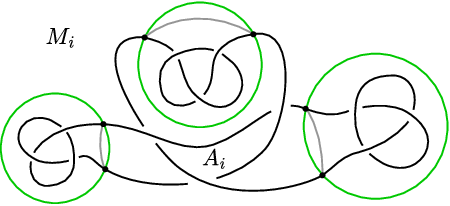}
\caption{Green circles indicate 2-spheres. The chosen arcs are grey.}\label{fig:cs4}
\end{figure}

For $1\leq i\leq n$, let $A_i=K\cap M_i$, which consists of some proper arcs in $M_i$. Each component of $\partial M_i$, which is a 2-sphere, contains exactly two points in $\partial A_i$. Then, to obtain a knot which is equivalent to some $K_j$, we only need to choose an arc in each component of $\partial M_i$ so that the arc connects the two points; see Figure~\ref{fig:cs4}. The knot is the union of the arcs and $A_i$, and it lies in $M_i$. We let $\widehat{A}_i$ denote the knot. We can assume that $\widehat{A}_i$ is equivalent to $K_i$ for $1\leq i\leq n$.

Let $D$ be a diagram of $K$ which has $c(K)$ crossings. We can assume that $D$ lies in a 2-sphere $S_0\subset S^3$, and $K$ agrees with $D$ except near the crossings, where near each crossing $K$ is a union of two arcs lying on different sides of $S_0$. By identifying a regular neighborhood of $S_0$ with $[-1,1]\times S_0$, we assume that $K\subset(-1,1)\times S_0$, and the projection $p:[-1,1]\times S_0\rightarrow S_0$ induces the projection of $K$ to $D$.

Let $\Sigma$ denote the union of the above $n-1$ decomposition spheres, and let $\mu_i$ be the union of the chosen arcs in $\partial M_i\subseteq\Sigma$ for $1\leq i\leq n$. Up to isotopy, we can also require that $\Sigma\subset(-1,1)\times S_0$, and $p(\widehat{A}_i)=p(\mu_i)\cup p(A_i)$ gives a diagram of $\widehat{A}_i$ for $1\leq i\leq n$, with respect to the projection map. We denote the diagram by $D_i$, and let $c(D_i)$ be the number of crossings in $D_i$. Then $c(K_i)\leq c(D_i)$ for $1\leq i\leq n$.

To prove the theorem, we want to choose suitable $\Sigma$ and $\mu_i$, $1\leq i\leq n$, so that
\[c(D_1)+\cdots+c(D_n)<16c(D),\]
where $c(D)$ is the number of crossings in $D$, which is equal to $c(K)$. We want that all the $D_i$ can be compared with $D$, and all the $c(D_i)$ are as small as possible. To show the above inequality and finish the proof, there are two steps.

\vspace{5pt}

\noindent {\bf Step~1.} Simplify a given $\Sigma$ so that it has a normal form with respect to $D$.

We will define a {\it handle structure} for the pair $(S^3,K)$, where the two 3-balls in $S^3\setminus(-1,1)\times S_0$ are the 3-handles, and $[-1,1]\times S_0$ is the union of the remaining handles. Each 2-handle, 1-handle, or 0-handle is a product of $[-1,1]$ with a region in $S_0$, and it corresponds to a face, an edge, or a crossing of $D$, respectively. Each 1-handle meets $K$ in an arc, and each 0-handle meets $K$ in two arcs.

Then we will define a kind of {\it normal surfaces} with respect to the above handle structure. Such a surface does not meet 3-handles, and it meets the other handles in some proper disks, which have certain shapes and are called {\it normal disks}. Each normal disk is a union of several {\it horizontal disks} and {\it vertical disks}. So the surface can be divided into the horizontal parts and the vertical parts.

\begin{figure}[h]
\includegraphics{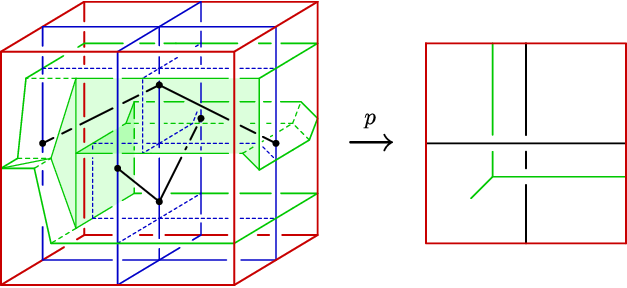}
\caption{Green disks are vertical. The other disks are horizontal.}\label{fig:example}
\end{figure}

A useful property of the normal surface is that locally if a normal disk contains vertical disks or meets $K$, then we can define a {\it diagram} of the disk by considering the projections of vertical disks and intersections with $K$, and the disk can always be reconstructed from the diagram. We also have a diagram for the surface. As an example, Figure~\ref{fig:example} shows a normal disk in a 0-handle. The disk is a union of three horizontal disks, two of which meet in an arc, and three vertical disks, which meet each other and give a figure ``$\textsf{Y}$'' in the projection diagram.

The main result in this step is that up to isotopy and a certain kind of {\it surgery}, any collection of prime decomposition spheres of $K$ can be converted into another such collection so that the union of the decomposition spheres is a normal surface. So we can assume that $\Sigma$ is a normal surface. Here a certain kind of {\it complexity} of surfaces is needed. We will also use it in Step~2 when modifying $(M_i,A_i)$.

\vspace{5pt}

\noindent {\bf Step~2.} Construct a handle structure for $(M_i,A_i)$ and use it to simplify $\mu_i$.

Let $\mathcal{H}_D$ denote the above handle structure for $(S^3,K)$. Since $\partial M_i$ is a union of normal disks, $M_i$ intersects each handle of $\mathcal{H}_D$ in some 3-balls. This will induce a {\it handle structure} $\mathcal{H}_{D,i}$ for $(M_i,A_i)$, where $j$-handles of $\mathcal{H}_{D,i}$ lie in $j$-handles of $\mathcal{H}_D$ for $0\leq j\leq 3$. Then we will modify $\mathcal{H}_{D,i}$ and construct a new handle structure for (a modified) $(M_i,A_i)$, which will be good enough to help us choose $\mu_i$.

Here the main idea is as follows. Since $\partial M_i$ meets the 2-handles of $\mathcal{H}_D$ in some disjoint normal disks, which do not meet $K$, we can require that $\mu_i$ does not meet the 2-handles of $\mathcal{H}_D$. A 1-handle or a 0-handle of $\mathcal{H}_D$ may contain a lot of normal disks in $\partial M_i$, and a normal disk may consist of many horizontal disks and vertical disks. Because we want that $c(D_i)$ is small, $\mu_i$ should avoid most of the horizontal disks and vertical disks in $\partial M_i$. For this purpose, and for the estimation of $c(D_i)$, we will cut 1-handles and 0-handles of $\mathcal{H}_{D,i}$ along $[-1,1]\times G$, where $G$ consists of certain arcs in $S_0$. So these handles will be divided into lots of {\it pieces}. Most of the pieces will be disjoint from $K$, and will be (naturally) homeomorphic to $I$-bundles over disks, where their intersections with the set $\partial M_i\cup\{\pm1\}\times S_0$ will correspond to the $\partial I$-bundles. We call them {\it parallel pieces}. The others are {\it nonparallel pieces}. Then, as in \cite{La}, each component of the union of all parallel pieces and 2-handles of $\mathcal{H}_{D,i}$ will be a trivial $I$-bundle over a sphere with holes, and it can be extended to a {\it generalized parallelity 2-handle} in $M_i$. On the other hand, the components of the union of all nonparallel pieces in a 1-handle or a 0-handle of $\mathcal{H}_{D,i}$ will be some 3-balls with peculiar shapes, and they have certain {\it models}. A 3-ball with model $X$ will be called an {\it $X$-piece}. All 1-handles and 0-handles of the new handle structure will be certain unions of these $X$-pieces. Then we will pick $\mu_i$ so that it is disjoint from the generalized parallelity 2-handles and certain new 1-handles.

\begin{figure}[h]
\includegraphics{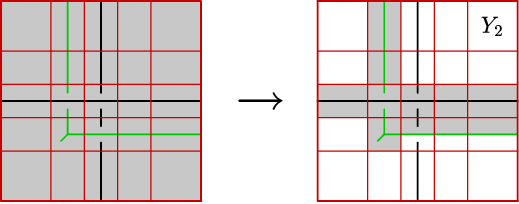}
\caption{Green arcs indicate vertical disks. Red arcs lie in $G$.}\label{fig:example2}
\end{figure}

As an example, assume that the normal disk shown in Figure~\ref{fig:example} lies in $\partial M_i$ and $M_i$ contains the upper arc. Let $H$ be the intersection of $M_i$ with the 0-handle. So $H$ is a 0-handle of $\mathcal{H}_{D,i}$. Up to isotopy, we can assume that the normal disk has a diagram as shown in Figure~\ref{fig:example2}, where the part of $G$ is also given. The shade in the left picture indicates the projection of $H$. Clearly, $[-1,1]\times G$ cuts the 0-handle of $\mathcal{H}_D$ into $25$ blocks. One can check that $H$ is divided into $37$ pieces. Among them, there are $26$ parallel pieces and $11$ nonparallel pieces. By removing all the parallel pieces from $H$, we obtain a 3-ball which meets $\partial M_i\cup\{\pm1\}\times S_0$ in two disks. We define the model of the 3-ball to be $Y_2$. The right picture in Figure~\ref{fig:example2} indicates the projection of the $Y_2$-piece. On the other hand, the parallel pieces and the adjacent 2-handles of $\mathcal{H}_D$ will be glued together. Also, when there are many parallel copies of the normal disk, all the pieces lying between the copies will be parallel.

To estimate $c(D_i)$, we will modify $\mu_i$ so that the intersections of it with all new 1-handles have {\it standard forms}, and all the possible crossings in $D_i$ come from the intersections of $\widehat{A}_i$ with those $X$-pieces in 0-handles of $\mathcal{H}_D$. Then, since we have a list of all models and we know exactly which $X$-pieces can appear in a 0-handle of $\mathcal{H}_D$ simultaneously, we will actually choose $\mu_i$, $1\leq i\leq n$, so that in each 0-handle of $\mathcal{H}_D$, $D_1\cup\cdots\cup D_n$ has at most $16$ crossings, and the number cannot be reached for all 0-handles. So $c(D_1)+\cdots+c(D_n)<16c(D)$.

Finally, we must mention the following points. Actually, when constructing the new handle structure, we will first classify the possible models. In this process, we will push $\partial M_i$ inward to eliminate some cases, and $(M_i,A_i)$ will become a smaller $(M_i',A_i')$. Then when constructing the generalized parallelity 2-handles, which will be the new 2-handles, we will need to remove some parts of $M_i'$, and $(M_i',A_i')$ will become an even smaller $(M_i'',A_i'')$ while $\widehat{A}_i''$ will still be equivalent to $\widehat{A}_i$. Also, we will need to modify $A_i''$ when estimating $c(D_i)$.

All technical details will be carefully explained in the subsequent sections.

\vspace{8pt}

\noindent {\bf Organization.} In Section~\ref{sec:MSS}, we first define {\it maximal sphere systems} of $K$. Such a system is just a collection of prime decomposition spheres of $K$, but from different points of view. Then we define an operation called {\it $D^2$-surgery}. It can convert one maximal sphere system into another and will be used throughout the proof.

In Section~\ref{sec:HS}, we first define the {\it handle structure} $\mathcal{H}_D$ for $(S^3,K)$. Then we give the definitions of {\it horizontal} and {\it vertical}, and the descriptions of the {\it normal disks}. Then we define the {\it normal surfaces} with respect to $\mathcal{H}_D$ and the {\it diagram} of such a surface. Finally, we define the {\it complexity} of certain surfaces in $[-1,1]\times S_0$. Then, in Section~\ref{sec:SF}, we prove the main result in Step~1, where we will simplify $\Sigma$ as much as possible. There will be {\it eight cases} for the part of $\Sigma$ in a 0-handle of $\mathcal{H}_D$.

In Section~\ref{sec:FDH}, we first define the {\it induced handle structure} $\mathcal{H}_{D,i}$ for $(M_i,A_i)$, and then we define {\it parallel pieces} and {\it nonparallel pieces}. Then, by examining the part of $\Sigma$ in a 1-handle of $\mathcal{H}_D$ and the above eight cases obtained in Section~4, we give the definitions and classifications of the {\it models} and {\it $X$-pieces}. Finally, we simplify the models and replace $(M_i,A_i)$ by $(M_i',A_i')$. Then, in Section~\ref{sec:GP}, we construct the {\it generalized parallelity 2-handles} and replace $(M_i',A_i')$ by $(M_i'',A_i'')$. We then define the new handle structure for $(M_i'',A_i'')$, where the generalized parallelity 2-handles are the new 2-handles, the new 1-handles consist of certain $X$-pieces, and the new 0-handles are certain $X$-pieces in the 0-handles of $\mathcal{H}_D$.

In Section~\ref{sec:EN}, we show how to choose the $\mu_i$, $1\leq i\leq n$, and finish the proof.

\newpage


\section{Maximal sphere systems and $D^2$-surgery}\label{sec:MSS}
In this section, we define maximal sphere systems of $K$ and an operation called $D^2$-surgery. We mainly consider the manifold pair having the form $(M,A)$, where $M$ is a connected compact 3-manifold embedded in $S^3$ whose boundary is a union of 2-spheres, and $A$ is a compact 1-manifold properly embedded in $M$. So we have $A\cap\partial M=\partial A$. We require that $A$ intersects any component of $\partial M$ in exactly two points, and if we add an arc in each component of $\partial M$ connecting the two points, then $A$ becomes a knot. This knot does not depend on the choices of those arcs in $\partial M$, since in a 2-sphere, any two arcs with common boundary are isotopic relative to the boundary. Below we use $\widehat{A}$ to denote this knot.

\begin{definition}\label{def:simple pair}
We call the pair $(M,A)$ {\it simple} if $\widehat{A}$ is a trivial knot. We call the pair $(M,A)$ {\it trivial} if it is simple and $\partial M$ is a 2-sphere.
\end{definition}

It is a basic fact that a 2-sphere embedded in $S^3$ is 2-sided and bounds a 3-ball on each side. Hence the complement of the interior of $M$ in $S^3$ is a union of some 3-balls. Up to isotopy, we can assume that the 3-balls are small tetrahedra, and $A$ consists of polygonal arcs whose edges are relatively much longer than those edges of the tetrahedra. Then it is easy to see that a simple pair in $S^3$ is isotopic to one of the manifold pairs shown in Figure~\ref{fig:simple pair}, where the number of small spheres can be any positive integer and $A$ lies in a straight line.

\begin{figure}[h]
\includegraphics{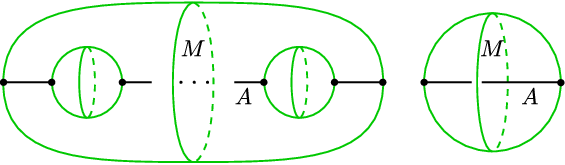}
\caption{A nontrivial simple pair and a trivial pair.}\label{fig:simple pair}
\end{figure}

Let $K=K_1\#\cdots\#K_n$ be the connected sum of oriented nontrivial prime knots $K_1,\ldots,K_n$ in $S^3$, where $n>1$. By the definition of connected sum, we have $n-1$ pairwise disjoint 2-spheres in $S^3$, where each of the spheres intersects $K$ in exactly two points. So by cutting $S^3$ along the spheres, we have pairs $(M_i,A_i)$, $1\leq i\leq n$, where $A_i=K\cap M_i$. We can assume that $\widehat{A}_i$ is equivalent to $K_i$. Then since $K_i$ is nontrivial and prime, $(M_i,A_i)$ is not simple, and if there is another 2-sphere lying in the interior of $M_i$ and intersecting $A_i$ transversely in two points, it must cut off a simple pair. So we give the following definition.

\begin{definition}\label{def:maxSS}
Let $\mathcal{S}=\{S_1,\ldots,S_m\}$ be a collection of pairwise disjoint spheres in $S^3$ so that each $S_j$ intersects $K$ transversely in two points. By cutting $S^3$ along the spheres there are pairs $(M_i,A_i)$, $1\leq i\leq m+1$. If each $(M_i,A_i)$ is nonsimple, and any properly embedded sphere in $M_i$ that meets $A_i$ transversely in two points cuts off a simple pair, then we call $\mathcal{S}$ a {\it maximal sphere system} of $K$.
\end{definition}

In general, from a prime decomposition of $K$ we can construct various maximal sphere systems. Conversely, let $\mathcal{S}$ be a maximal sphere system of $K$, then we have a decomposition $K=\widehat{A}_1\#\cdots\#\widehat{A}_{m+1}$, and each $\widehat{A}_i$ is nontrivial and prime. So by the prime decomposition theorem of knots we always have $m=n-1$, and we can assume that $\widehat{A}_i$ is equivalent to $K_i$ for $1\leq i\leq n$. Hence any $\mathcal{S}$ gives us a concrete correspondence between the pairs $(M_i,A_i)$ and the knots $K_i$.

\vspace{8pt}

Below we define an operation which can convert one system into another.

Let $\mathcal{S}=\{S_1,\ldots,S_{n-1}\}$ be a maximal sphere system of $K$. Let $(M_i,A_i)$ be the pair corresponding to $K_i$, $1\leq i\leq n$. Assume that we have a disk $D^2$ embedded in $M_i$ so that $D^2\cap\partial M_i=\partial D^2$. Then $\partial D^2$ lies in some $S_j$, and it splits $S_j$ into two disks $D^2_1$ and $D^2_2$, which give two spheres $D^2_1\cup D^2$ and $D^2_2\cup D^2$. We require that $D^2\cap A_i=\emptyset$ or $D^2$ meets $A_i$ transversely in one point in the interior of $M_i$.

\begin{figure}[h]
\includegraphics{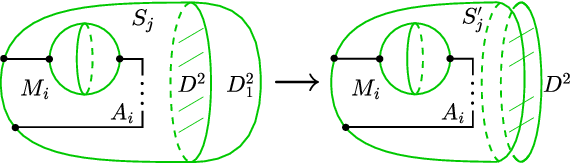}
\caption{$S_j$ can be isotoped to $S_j'$ via the $B^3$ bounded by $D^2_1\cup D^2$.}\label{fig:surgery1}
\end{figure}

\noindent {\bf Case 1.} $D^2\cap A_i=\emptyset$.

Since $S_j$ meets $K$ in two points, one of $D^2_1\cup D^2$ and $D^2_2\cup D^2$ does not meet $K$. Assume that $D^2_1\cup D^2$ does not meet $K$, then it bounds a 3-ball $B^3$ that also does not meet $K$. Hence $B^3$ does not contain $D^2_2$, and it lies in $M_i$. By doing a surgery to $S_j$ along $D^2$, we get a new sphere $S_j'$ disjoint from $D^2_1\cup D^2$. Because $S_j$ can be isotoped to $S_j'$ via $B^3$, $\mathcal{S}'=\mathcal{S}\cup\{S_j'\}\setminus\{S_j\}$ gives a new system; see Figure~\ref{fig:surgery1}, where there can be any number of small spheres and $A_i$ can be complicated.

\begin{figure}[h]
\includegraphics{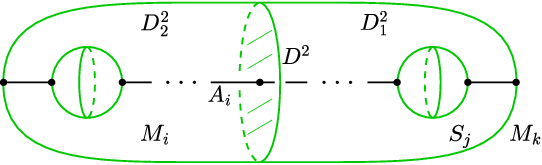}
\caption{$D^2$ splits $(M_i,A_i)$ into a nonsimple pair and a simple pair.}\label{fig:surgery2}
\end{figure}

\noindent {\bf Case 2.} $D^2\cap A_i\neq\emptyset$.

Since $S_j$ meets $K$ in two points, each of $D^2_1\cup D^2$ and $D^2_2\cup D^2$ meets $K$ in two points. Since $M_i$ lies in a 3-ball bounded by $S_j$, $D^2$ splits $(M_i,A_i)$ into two pairs. Let $(M_{i,r},A_{i,r})$ be the pair so that $M_{i,r}$ contains $D^2_r$, where $r=1,2$. By choosing suitable arcs in $\partial M_i\cup D^2$ that connect the points in $K$, it is not hard to see that $\widehat{A}_i=\widehat{A}_{i,1}\#\widehat{A}_{i,2}$. Then since $\widehat{A}_i$ is nontrivial and prime, exactly one of $(M_{i,1},A_{i,1})$ and $(M_{i,2},A_{i,2})$ is simple. Assume that $(M_{i,1},A_{i,1})$ is simple. By doing a surgery to $S_j$ along $D^2$, we get a new sphere $S_j'$ disjoint from $D^2_1\cup D^2$. Consider the pair $(M_k,A_k)$ so that $M_k\cap M_i=S_j$. It meets $(M_{i,1},A_{i,1})$ in $D^2_1$. By choosing suitable arcs as before, we see that $(M_k\cup M_{i,1},A_k\cup A_{i,1})$ is a nonsimple pair. Because $S_j'$ is essentially $D^2_2\cup D^2$, we obtain a new system $\mathcal{S}'=\mathcal{S}\cup\{S_j'\}\setminus\{S_j\}$; see Figure~\ref{fig:surgery2}, where there can be any number of small spheres and $A_i$ can be complicated.

\begin{definition}\label{def:Dsurgery}
The maximal sphere system $\mathcal{S}'$ obtained from $\mathcal{S}$ is determined by the disk $D^2$. We call this operation converting $\mathcal{S}$ into $\mathcal{S}'$ a {\it $D^2$-surgery}.
\end{definition}

\begin{remark}\label{rem:corr}
In above Case 1 and Case 2, only the pairs that meet $S_j$ will change, however, their corresponding knots do not change. Hence the $D^2$-surgery does not affect the correspondence between the pairs $(M_i,A_i)$ and the knots $K_i$.
\end{remark}

In later sections, we will usually consider the intersections between spheres in $\mathcal{S}$ and another surface. If in that surface we find a disk $D^2$ satisfying the conditions, then we will use the $D^2$-surgery to reduce the intersections.


\section{Handle structure and normal surfaces}\label{sec:HS}
In this section, we first construct a handle structure for $(S^3,K)$. Then, we give the definitions of normal surfaces and related concepts. Let $S_0$ be a fixed 2-sphere in $S^3$, and let $D\subset S_0$ be a diagram of $K$ which has $c(K)$ crossings. Here we view $D$ as a 4-valent graph in $S_0$ with over/under marks at each vertex. So the vertices represent the crossings. We fix a regular neighborhood of $S_0$ in $S^3$, and identify it with $[-1,1]\times S_0$. Then, we can assume that $K\subset(-1,1)\times S_0$, and the projection $p:[-1,1]\times S_0\rightarrow S_0$ induces the projection of $K$ to $D$.

\vspace{8pt}

Let $B_-$ and $B_+$ be the two 3-balls in $S^3\setminus(-1,1)\times S_0$ where $\partial B_-=\{-1\}\times S_0$ and $\partial B_+=\{1\}\times S_0$. The handle structure for $(S^3,K)$ will have these two 3-balls as the 3-handles, and $[-1,1]\times S_0$ will be the union of all the 2-handles, 1-handles, and 0-handles. Before defining these handles, we adjust $D$ and $K$ as follows.

\begin{figure}[h]
\includegraphics{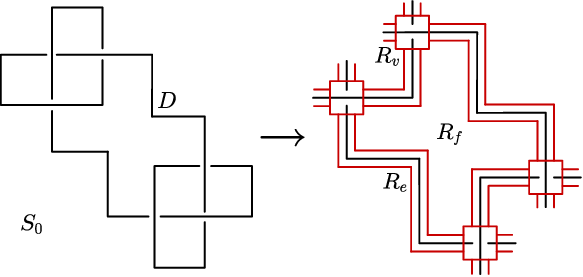}
\caption{The diagram $D\subset S_0$ and the regions $R_v$, $R_e$, and $R_f$.}\label{fig:diagram}
\end{figure}

We can identify $S_0$ with $\mathbb{R}^2\cup\{\infty\}$. Then, we can require that $D\subset\mathbb{R}^2$ and it is a union of straight arcs parallel to the $x$-axis or $y$-axis, where two of the arcs meet if and only if they meet in their endpoints. Consider the distance on $\mathbb{R}^2$ given by
\[\mathrm{dist}((x_1,y_1),(x_2,y_2))=\max\{|x_1-x_2|,|y_1-y_2|\}.\]
We can require that two disjoint straight arcs in $D$ have distance at least $3$. Then for each crossing $v$ in $D$, we have a square region in $S_0$ defined by
\[R_v=\{r\in\mathbb{R}^2\mid\mathrm{dist}(r,v)\leq 1\}.\]
For each polygonal arc $e$ in $D$ that connects two adjacent crossings $v_1$ and $v_2$, let
\[R_e=\{r\in\mathbb{R}^2\mid\mathrm{dist}(r,e)\leq \frac{1}{2},\, \mathrm{dist}(r,v_1)\geq 1,\, \mathrm{dist}(r,v_2)\geq 1\}.\]
Let $N$ be the union of all the $R_v$ and $R_e$. For each component $f$ of $S_0\setminus D$, let $R_f$ be the closure of $f\setminus N$. Then $S_0$ is decomposed into all these regions $R_v$, $R_e$, and $R_f$; see Figure~\ref{fig:diagram}. Then we can require that $K\cap [-1,1]\times R_e\subset D$, and each of the two arcs in $K\cap [-1,1]\times R_v$ consists of two straight arcs which meet at $(-1/2,v)$ or $(1/2,v)$. Let $A_e=K\cap [-1,1]\times R_e$ and $A_v=K\cap [-1,1]\times R_v$.

\begin{definition}\label{def:handlestr}
Let $\mathcal{H}^3$, $\mathcal{H}^2$, $\mathcal{H}^1$, and $\mathcal{H}^0$ denote the following four collections
\[\{B_-,B_+\},\{[-1,1]\times R_f\},\{([-1,1]\times R_e,A_e)\},\{([-1,1]\times R_v,A_v)\},\]
respectively, whose elements are defined to be 3-handles, 2-handles, 1-handles, and 0-handles. These handles together give a handle structure for $(S^3,K)$, and we call it a {\it $D$-structure} for $(S^3,K)$. We denote this $D$-structure by $\mathcal{H}_D$.

We call the handlebody $[-1,1]\times N$ the {\it mainbody} associated to $\mathcal{H}_D$ and denote it by $H_D$. We call $[-1,1]\times D$ the {\it skeleton} of $H_D$ and denote it by $S_D$.
\end{definition}

Figure~\ref{fig:handle} gives pictures of a 1-handle and a 0-handle, together with the parts of the skeleton in them. Note that in general $A_e$ is not a straight arc and $R_e$ is not a rectangle. However, it is not hard to construct a specific homeomorphism from $R_e$ to $[-1/2,1/2]\times[-1,1]$, mapping $A_e$ to $\{0\}\times[-1,1]$. So we will usually consider a 1-handle as the product $([-1,1]\times[-1/2,1/2],\{(0,0)\})\times[-1,1]$.

\begin{figure}[h]
\includegraphics{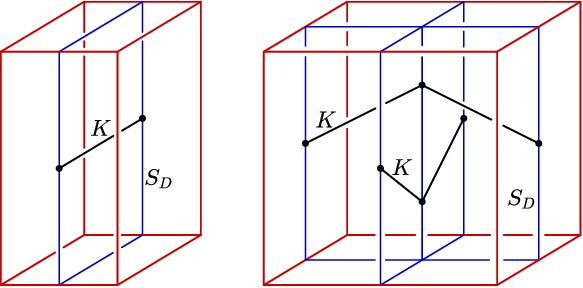}
\caption{A 1-handle, a 0-handle, and the parts of $S_D$ in them.}\label{fig:handle}
\end{figure}

By the construction of $\mathcal{H}_D$, it is easy to see that each 1-handle meets 0-handles in two disks, each 2-handle meets $H_D$ in an annulus, and two handles in the same $\mathcal{H}^i$ are disjoint. Since $D$ realizes $c(K)$, we also have the following.

\begin{proposition}\label{prop:goodhandle}
If two handles of $\mathcal{H}_D$ meet, then they meet in exactly one disk.
\end{proposition}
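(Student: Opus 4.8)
The plan is to verify the statement by a direct case analysis over all pairs of distinct handles, computing each intersection from the product descriptions. Two handles in the same collection $\mathcal{H}^i$ are already known to be disjoint, and the two $3$-handles $B_+=\{1\}\times[0,L]^3$ and $B_-=\{-1\}\times[0,L]^3$ lie over the levels $\{1\}$ and $\{-1\}$ and so never meet; hence it suffices to treat a $3$-handle against a side handle, and two side handles of different index. For $B_+$ against any of $[-1,1]\times R_v$, $[-1,1]\times R_e$, $[-1,1]\times R_f$, the intersection is the single top copy $\{1\}\times R_v$, $\{1\}\times R_e$, or $\{1\}\times R_f$, so the only point is that each of $R_v$, $R_e$, $R_f$ is a disk; the case of $B_-$ is identical. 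Here $R_e$ is a rectangular band around the edge $e$ and $R_v$ a square around the crossing $v$, both disks, and since $D$ is connected every component $f$ of $S_0\setminus D$ is an open disk, so $R_f=\overline{f\setminus N}$ is again a disk. This disposes of all the $3$-handle cases.

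For two side handles the intersection is $[-1,1]\times(R_\bullet\cap R_\circ)$, which is a disk exactly when the planar set $R_\bullet\cap R_\circ\subset S_0$ is a single arc. The proposition thus reduces to three planar statements: $R_f\cap R_e$, $R_e\cap R_v$, and $R_f\cap R_v$ are each a single arc whenever nonempty. The boundary of the band $R_e$ has two long sides, facing the two faces adjacent to $e$, and two short sides, facing the $0$-handles at the endpoints of $e$. So $R_f\cap R_e$ is one long side, hence one arc, provided the two faces adjacent to $e$ are \emph{distinct}; were they equal, $e$ would be a bridge of $D$, impossible since a connected $4$-regular graph is Eulerian and therefore bridgeless. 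Likewise $R_e\cap R_v$ is one short side, hence one arc, provided $v$ is an endpoint of $e$ only once, i.e. $e$ is not a loop. Loops are excluded: a loop joining two opposite half-edges at $v$ would, after a small push-off, cross the closed curve $D$ exactly once, which is impossible in a sphere, while a loop joining two adjacent half-edges admits a push-off disjoint from the connected graph $D$ that bounds an empty disk, i.e.\ a monogon, giving a Reidemeister~I reduction and contradicting the minimality of $D$.

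The delicate case is $R_f\cap R_v$. The four half-edges at $v$ cut $\partial R_v$ into four corner-arcs, one in each quadrant, and $R_f\cap R_v$ is the union of those corner-arcs whose quadrants belong to $f$; so I must show that each face occupies at most one quadrant at $v$. Two adjacent quadrants share a half-edge, and their coincidence would again make an edge a bridge, which is excluded. Two opposite quadrants cannot be the same face either: an arc in $f$ joining a point of one to a point of the other, closed up through $v$, would be a simple closed curve meeting $D$ only at the crossing $v$, so $v$ would be nugatory, which a diagram realizing $c(K)$ does not possess. Hence each face meets $R_v$ in a single corner-arc, $R_f\cap R_v$ is an arc, and the proposition follows.

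I expect the main obstacle to be exactly this last case, the exclusion of a single face touching a crossing from two opposite sides. The bridge and loop phenomena are ruled out on purely graph-theoretic grounds, but the opposite-quadrant configuration is a genuine nugatory crossing and can be excluded only by using that $D$ is minimal; this is the one essential appeal to minimality, and it is what the remark preceding the proposition is flagging.
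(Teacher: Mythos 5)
Your proof is correct and follows essentially the same route as the paper's: reduce to the planar regions $R_f$, $R_e$, $R_v$ and rule out double intersections by the minimality of $D$ (no monogons, no nugatory crossings), which is exactly the paper's argument. You merely fill in the steps the paper labels as "clear" (the bridgeless/Eulerian justification for $R_f\cap R_e$ and the parity exclusion of the opposite-half-edge loop), so the content is the same with more detail.
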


\begin{proof}
Clearly a 3-handle meets any other handle in at most one disk. Handles in $\mathcal{H}^2$, $\mathcal{H}^1$, and $\mathcal{H}^0$ correspond to regions in $S_0$. Clearly $R_f$ cannot meet $R_e$ twice. If $R_f$ meets $R_v$ twice, then in $R_f\cup R_v\subset S_0$ there exists a circle which meets $D$ only in $v$. So the crossing $v$ can be removed. Since $D$ is minimal, this does not happen. Similarly, since $D$ is minimal, $R_e$ cannot meet $R_v$ twice.
\end{proof}

Below we define the normal surfaces with respect to $\mathcal{H}_D$. Such a surface lies in $(-1,1)\times S_0$ and consists of normal disks, which are unions of horizontal disks and vertical disks. We first define these different kinds of disks.

\begin{definition}\label{def:Hor and Ver}
An arc in $(-1,1)\times S_0$ is called {\it horizontal} if under the projection $p:[-1,1]\times S_0\rightarrow S_0$ it is locally homeomorphic to its image. It is called {\it vertical} if it lies in $[-1,1]\times\{s\}$ for some point $s$ in $S_0$.

Similarly, a surface in $(-1,1)\times S_0$ is called {\it horizontal} if under the projection $p$ it is locally homeomorphic to its image. It is called {\it vertical} if it lies in $[-1,1]\times G$, where $G$ is some embedded graph in $S_0$.
\end{definition}

Consider an embedded disk $D^2$ in an $i$-handle $H^i$, where $0\leq i\leq 2$. We require that $D^2\cap\partial H^i=\partial D^2$, $\partial D^2\cap\{\pm1\}\times S_0=\emptyset$, and either $D^2\cap K=\emptyset$ or $D^2$ meets $K$ transversely in one point in the interior of $H^i$. In the following cases, we define certain classes of such disks. They will give the normal disks.

\vspace{5pt}

\noindent {\bf Case 1.} $i=2$ and $H^2$ is $[-1,1]\times R_f$.

There is one class, called the {\it flat disks}. The $D^2$ is horizontal, where $\partial D^2$ meets each $[-1,1]\times R_e$ in $\{t_e\}\times (R_f\cap R_e)$ for some $t_e$, and $\partial D^2\cap[-1,1]\times R_v$ is either empty or a union of two straight arcs. A typical example is $\{t\}\times R_f$.

\vspace{5pt}

\noindent {\bf Case 2.} $i=1$ and $H^1$ is identified with $([-1,1]\times[-1/2,1/2],\{(0,0)\})\times[-1,1]$.

There are two classes, called the {\it flat disks} and the {\it curved disks}. These disks do not meet $K$, and have the form $\alpha\times[-1,1]$ for some arc $\alpha$ in $[-1,1]\times[-1/2,1/2]$. The arc $\alpha$ is a union of three straight arcs. The $D^2$ is flat if $\alpha\times\{0\}$ is horizontal. The $D^2$ is curved if $\{\pm1\}\times[-1/2,1/2]$ and $\{(0,0)\}$ lie on different sides of $\alpha$, and $\alpha\times\{0\}$ is a union of one vertical arc and two horizontal arcs. See Figure~\ref{fig:surin1} for the examples of flat disks and curved disks, where the green disks are vertical. We can divide the flat disks into two subclasses according to whether $D^2\cap S_D$ is above or below $K$. The curved disks can also be divided into two smaller classes.

\begin{figure}[h]
\includegraphics{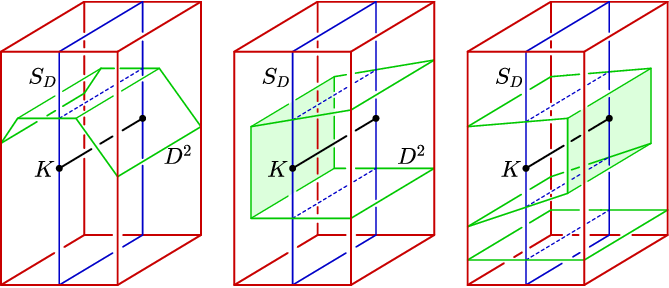}
\caption{A flat disk, a curved disk, and another two possibilities.}\label{fig:surin1}
\end{figure}

Note that in general $[-1,1]\times R_e$, the $D^2$ is not a product, but those horizontal disks and vertical disks in $D^2$ are preserved by the identification.

\vspace{5pt}

\noindent {\bf Case 3.} $i=0$ and $H^0$ is $([-1,1]\times R_v,A_v)$. Additionally, $D^2\cap K=\emptyset$.

There are three classes, called the {\it flat disks}, {\it curved disks}, and {\it twisted disks}. On certain pairs of these disks, there is also a {\it band sum} operation, which can produce infinitely many subclasses of these disks. The disks consist of horizontal disks and vertical disks, and their boundaries consist of the polygonal arcs which come from those flat disks and curved disks defined in Cases 1 and 2.

The $D^2$ is flat if it is horizontal. According to whether it meets $[-1,1]\times\{v\}$ in a point in $[-1,-1/2]\times\{v\}$, $[-1/2,1/2]\times\{v\}$, or $[1/2,1]\times\{v\}$, we can divide the flat disks into three subclasses which are denoted by $F_-$, $F_0$, and $F_+$, respectively. See Figure~\ref{fig:surF} for the examples of flat disks in these subclasses.

\begin{figure}[h]
\includegraphics{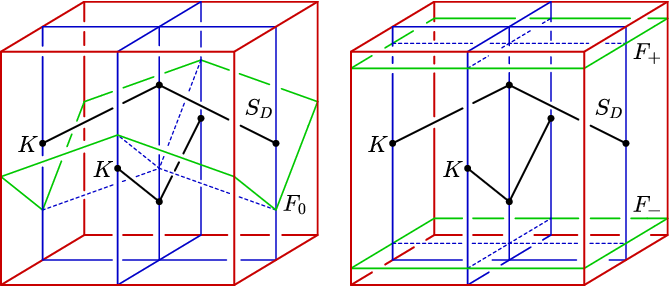}
\caption{Examples of flat disks in subclasses $F_-$, $F_0$, and $F_+$.}\label{fig:surF}
\end{figure}

The $D^2$ is curved if up to Euclidean isometries of $([-1,1]\times R_v,A_v)$, $D^2$ has the form
shown in one of the pictures in Figure~\ref{fig:surC}. The disk is a union of one vertical disk and two horizontal disks, where the vertical one is an $I$-bundle over a straight arc in $R_v$, and $\{\pm1\}\times R_v$ and some arc of $A_v$ lie on different sides of $D^2$. In fact, these properties can be used to define a curved disk. According to the positions of the intersections of $D^2$ with $[-1,1]\times\{v\}$, we can also divide the curved disks into three subclasses which are denoted by $C_-$, $C_0$, and $C_+$, respectively. In Figure~\ref{fig:surC}, an isometry interchanging the two arcs of $A_v$ will send the disk in $C_+$ to a disk in $C_-$. As in Case 2, we can further divide $C_0$ into four even smaller classes, and the isometries permute the disks in them. Similarly, we can divide $C_+$ (resp. $C_-$) into two smaller classes, which are differ by a $\pi$-rotation about $[-1,1]\times\{v\}$.

\begin{figure}[h]
\includegraphics{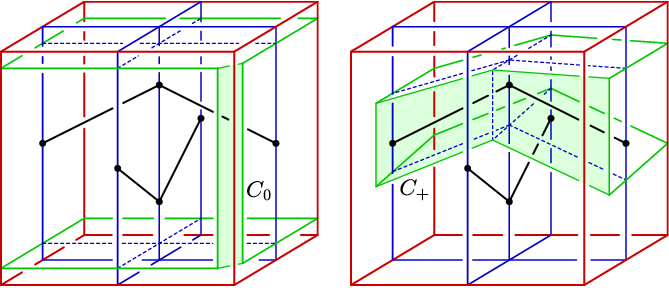}
\caption{Examples of curved disks in subclasses $C_0$ and $C_+$.}\label{fig:surC}
\end{figure}

Note that actually $\partial D^2$ in the left picture of Figure~\ref{fig:surC} cannot come from those disks defined in Cases 1 and 2. To ensure that $\partial D^2$ consists of required polygonal arcs, we should draw the vertical rectangle wider so that the vertical edges can lie in the adjacent 1-handles. The picture indicates that we can obtain the disk in $C_0$ from a disk in $F_-$ and a disk in $F_+$ by adding a band. Below there will be similar situations. The pictures will mainly show the structures of the disks.

The $D^2$ is twisted if up to Euclidean isometries of $([-1,1]\times R_v,A_v)$, it has the form shown in the left picture of Figure~\ref{fig:surT}. The disk is a union of three horizontal disks and three vertical disks. Two of the vertical ones are $I$-bundles over straight arcs in $R_v$, while the other vertical one is a triangle in the interior of $D^2$. Each of the $I$-bundles meets the vertical edge of the triangle in a fiber, and the three disks correspond to the three branches of a figure ``$\textsf{Y}$'', under $p:[-1,1]\times S_0\rightarrow S_0$. Two of the horizontal ones meet in a straight arc. It connects the vertex of the triangle not in the vertical edge to a point in $\partial D^2$. Also, $\{1\}\times R_v$ and $(-1/2,v)$ lie on the same side of $D^2$, while $\{-1\}\times R_v$ and $(1/2,v)$ lie on the other side. In fact, these properties can be used to define a twisted disk. Note that the union of those three vertical disks in $D^2$ is also a vertical disk. Then, according to its shape, we divide the twisted disks into two subclasses which are denoted by $T_-$ and $T_+$. The $D^2$ in the left picture of Figure~\ref{fig:surT} lies in $T_-$, while any orientation-reversing isometry of $([-1,1]\times R_v,A_v)$ will send it to a disk in $T_+$.

\begin{figure}[h]
\includegraphics{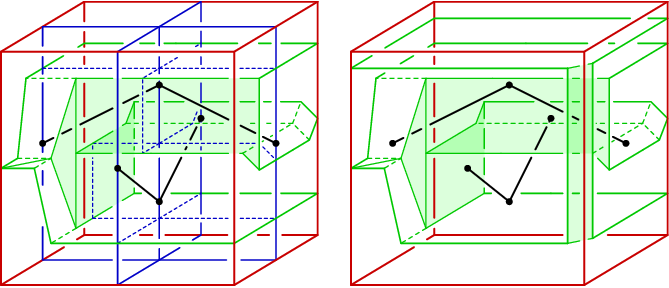}
\caption{A twisted disk in $T_-$ and its band sum with a disk in $F_+$.}\label{fig:surT}
\end{figure}

As in the case of $C_0$, we can add a band between a disk in $T_-$ and a disk in $F_+$ as shown in
the right picture of Figure~\ref{fig:surT}. Here a band is a vertical $I$-bundle over a straight arc in $R_v$ near a corner. Similarly, we can add a band between a disk in $T_-$ and a disk in $F_-$. Then we can add a disk in $F_-$ or $F_+$ further. In general, we can add $r$ disjoint disks in $F_+$ and $s$ disjoint disks in $F_-$ to the disk in $T_-$ by such band sum operations, provided that $|r-s|\leq 1$. We also regard this resulting disk as a twisted disk. Then such disks provide another subclass of twisted disks which is denoted by $T_-^{r,s}$. In a similar way, we also have a subclass $T_+^{r,s}$.

Note that each of the subclasses $T_-$, $T_+$, $T_-^{r,s}$, and $T_+^{r,s}$ can be divided into two smaller classes which are differ by a $\pi$-rotation about $[-1,1]\times\{v\}$, as in the cases of $C_-$ and $C_+$. Also, since boundaries of the twisted disks should be unions of the polygonal arcs that come from those disks defined in Cases 1 and 2, the horizontal disks in Figure~\ref{fig:surT} should be suitably isotoped and the band should be wider.

\vspace{5pt}

\noindent {\bf Case 4.} $i=0$ and $H^0$ is $([-1,1]\times R_v,A_v)$. Additionally, $D^2\cap K\neq\emptyset$.

There are two classes, called the {\it flat disks} and the {\it curved disks}. The disks have shapes similar to those in $F_{\pm}$, $C_0$, or $C_{\pm}$, and they do not meet $[-1/2,1/2]\times\{v\}$. As in Case 3, their boundaries consist of the polygonal arcs coming from the disks defined in Cases 1 and 2. The $D^2$ is flat if it is horizontal and the two intersection points of $D^2$ with $[-1,1]\times\{v\}$ and $K$ lie on different sides of $S_0$. Then according to whether it meets $[-1,-1/2]\times\{v\}$ or $[1/2,1]\times\{v\}$, we can divide the flat disks into two subclasses, denoted by $F_-^+$ and $F_+^-$, respectively. The $D^2$ is curved if it is a union of one vertical disk and two horizontal disks, where these disks satisfy the properties of the curved disks in Case 3, and each of the horizontal disks can only meet $[-1,1]\times\{v\}$ and $K$ on different sides of $S_0$. Then, according to the shape of $D^2$ and the position of the point $D^2\cap K$, we can divide the curved disks into four subclasses, denoted by $C_0^+$, $C_0^-$, $C_-^+$, and $C_+^-$, respectively.

\begin{figure}[h]
\includegraphics{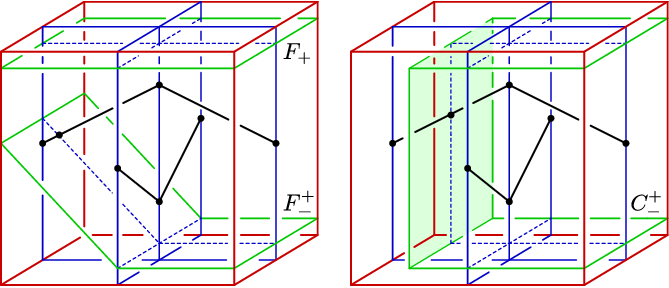}
\caption{A flat disk in $F_-^+$ and a curved disk in $C_-^+$.}\label{fig:surP}
\end{figure}

The left picture of Figure~\ref{fig:surP} shows a disk in $F_-^+$ and a disk in $F_+$. By adding a band between them as in the left picture of Figure~\ref{fig:surC}, we can obtain a disk in $C_0^+$. The right picture of Figure~\ref{fig:surP} shows a disk in $C_-^+$. An isometry interchanging the two arcs of $A_v$ will send a disk in $F_-^+$, $C_0^+$, or $C_-^+$ to some disk in $F_+^-$, $C_0^-$, or $C_+^-$, respectively. Each of the subclasses $F_-^+$, $F_+^-$, $C_-^+$, and $C_+^-$ can be divided into two smaller classes which are differ by a $\pi$-rotation about $[-1,1]\times\{v\}$, as in the cases of $C_-$ and $C_+$, and according to the position of the band, each of $C_0^+$ and $C_0^-$ can be divided into four smaller classes, as in the case of $C_0$. Finally, note that for $D^2$ in $C_-^+$ (resp. $C_+^-$), the point $D^2\cap K$ may lie in the lower (resp. upper) horizontal disk. For simplicity, we delete such $D^2$ from $C_-^+$ (resp. $C_+^-$).

\begin{definition}\label{def:normalS}
We call those flat, curved, and twisted disks defined in Cases 1-4 the {\it normal disks} with respect to $\mathcal{H}_D$.

Let $S$ be a closed surface in $S^3$, which meets $K$ transversely. Then, we call $S$ a {\it normal surface} with respect to $\mathcal{H}_D$ if it does not meet the 3-handles, and it meets any other handle in a union of disjoint normal disks.
\end{definition}

Below we first provide some properties of the normal disks. Then we define the diagrams of the normal disks and normal surfaces. Finally, we define a complexity of certain surfaces. To avoid tedious case-by-case arguments, we omit some proofs and only give the ideas. This does not affect the proof of Theorem~\ref{thm:main}.

\begin{proposition}\label{pro:distin}
Let $D^2_1$ and $D^2_2$ be normal disks in a handle $[-1,1]\times R$, where $R$ is some $R_e$ or $R_v$. If $D^2_1$ and $D^2_2$ do not lie in the same class (resp. subclass or smaller class), then $\partial D^2_1$ and $\partial D^2_2$ are not isotopic in $([-1,1]\times\partial R)\setminus K$.
\end{proposition}

To prove the proposition, we can use two elements in the fundamental group of $([-1,1]\times\partial R)\setminus K$ to represent $\partial D^2_1$ and $\partial D^2_2$, respectively, and show that the two elements are not conjugate in the free group. We can also try to choose a circle in $([-1,1]\times\partial R)\setminus K$ so that it has different geometric intersection numbers with the two circles $\partial D^2_1$ and $\partial D^2_2$. As an example, to distinguish two twisted disks $D^2_1$ and $D^2_2$, we can compute the geometric intersection numbers of $\partial D^2_1$ and $\partial D^2_2$ with the boundary circle of some disk in $F_-^+$ or $F_+^-$.

\begin{proposition}\label{pro:isotop}
Let $D^2_1$ and $D^2_2$ be normal disks in a handle $[-1,1]\times R$, where $R$ is some $R_e$ or $R_v$. If $D^2_1$ and $D^2_2$ lie in the same subclass and the same smaller class, then they are isotopic via a family of normal disks in the same subclass and the same smaller class, where the isotopy is ambient and keeps $K$ invariant.
\end{proposition}

To get a required isotopy of $[-1,1]\times R$, we can first adjust the vertical disks of $D^2_1$, so that their projections in $R$ coincide with the projections of vertical disks of $D^2_2$. Then, we can adjust the horizontal disks of $D^2_1$, keeping the projections fixed, so that $D^2_1$ coincides with $D^2_2$. We say that such $D^2_1$ and $D^2_2$ are {\it equivalent}. Then, by above propositions, there is a bijective correspondence between the equivalence classes of the normal disks in $[-1,1]\times R$ and the isotopy classes of the boundaries of normal disks in $([-1,1]\times\partial R)\setminus K$.

There is a similar correspondence for unions of normal disks. Let $\Psi_1$ and $\Psi_2$ be unions of disjoint normal disks in a handle $[-1,1]\times R$, where $R$ is some $R_e$ or $R_v$. For $i=1,2$, each component of $\partial\Psi_i$ in $([-1,1]\times\partial R)\setminus K$ defines an isotopy class. Let $\Phi_i$ be the set (with multiplicity) of isotopy classes given by the components of $\partial\Psi_i$. It is a basic fact that $\Phi_i$ determines $\partial\Psi_i$ up to isotopy of $([-1,1]\times\partial R)\setminus K$. Similar to above propositions, we have the following.

\begin{proposition}\label{pro:multiD}
The two sets (with multiplicity) $\Phi_1$ and $\Phi_2$ are the same if and only if the $\Psi_1$ and $\Psi_2$ are isotopic via a family of unions of disjoint normal disks, where the isotopy is ambient and keeps $K$ invariant.
\end{proposition}

The ``if'' part is clear. To show the ``only if'' part, we can get a required isotopy of $[-1,1]\times R$ as in the case of Proposition~\ref{pro:isotop}. Here when $R$ is some $R_e$, the case is easy, since the projections of the vertical disks are some parallel arcs; when $R$ is some $R_v$, we can show that the projections of the vertical disks in $\Psi_1$ and $\Psi_2$ give the same diagram in $R_v$; see Proposition~\ref{pro:diag}. Actually, we can first determine all the possible $\Phi_1=\Phi_2$ or, equivalently, the subclasses and smaller classes of normal disks in $\Psi_i$. For simplicity, we only provide the relations between the normal disks in $[-1,1]\times R_v$ which do not contain bands, as follows.

\begin{proposition}\label{prop:NdiskR}
We regard $F_-$, $F_0$, $F_+$, $C_-$, $C_+$, $T_-$, $T_+$, $F_-^+$, $F_+^-$, $C_-^+$, $C_+^-$ as $11$ vertices. If in a 0-handle, there are two disjoint disks belonging to two different vertices, respectively, then we add an edge between the two vertices. Then

(1) $F_-$ is adjacent to any other vertex, and so is $F_+$;

(2) the remaining edges are given in the graph in Figure~\ref{fig:graph1};

(3) for each triangle in the graph in Figure~\ref{fig:graph1}, there are three mutually disjoint disks in a 0-handle, which belong to the three vertices, respectively.
\end{proposition}

\begin{remark}
We can replace the subclasses other than $F_-$, $F_0$, and $F_+$ by smaller classes in them. Then we get $19$ vertices and the results still hold, except that the graph in Figure~\ref{fig:graph1} should be replaced by the graph in Figure~\ref{fig:graph2}. Here we use the symbol and its image under a $\pi$-rotation to denote the two smaller classes lying in the same subclass, since such smaller classes differ by a $\pi$-rotation.
\end{remark}

\begin{figure}[h]
\includegraphics{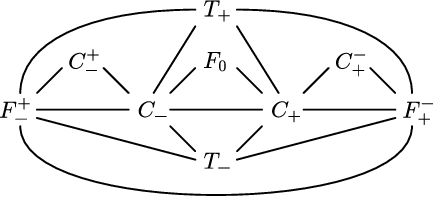}
\caption{Edges between the subclasses other than $F_-$ and $F_+$.}\label{fig:graph1}
\end{figure}

\begin{figure}[h]
\includegraphics{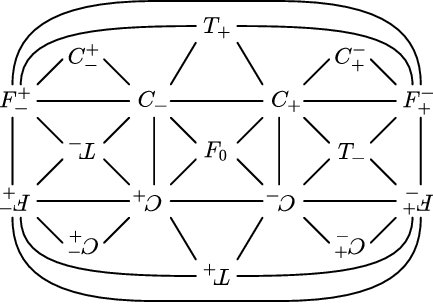}
\caption{Edges between the smaller classes together with $F_0$.}\label{fig:graph2}
\end{figure}

The existence of the edges can be checked case-by-case. The nonexistence of an edge can be proved by computing the geometric intersection numbers between the boundary circles of normal disks, as in the case of Proposition~\ref{pro:distin}. So, we see that if $\Psi_i$ contains no bands, then the possible $\Phi_i$ can be classified. Note that when we add a band between two normal disks, one of the disks belongs to $F_+$ or $F_-$. This means that, starting from such an uppermost disk in $F_+$ or lowermost disk in $F_-$, we can remove the possible bands in $\Psi_i$ inductively until it contains no bands and we meet some previous case of $\Phi_i$. Then, by reversing this process, the possible $\Phi_i$ when $\Psi_i$ contains bands can also be classified. We will provide parts of the results in the last step of the proof of Theorem~\ref{thm:normalForm}.

Now consider the vertical disks in $\Psi_i$ and $K\cap\Psi_i$. Let $G_i$ be the image of their union in $R$ under the projection $p:[-1,1]\times S_0\rightarrow S_0$, and assume that $G_i\neq\emptyset$. If $R$ is some $R_e$, then $G_i$ can only be a union of parallel arcs lying on one side of $A_e$. So below we mainly consider the case when $R$ is some $R_v$. If $K\cap\Psi_i\neq\emptyset$, then we mark the points $p(K\cap\Psi_i)$ in $G_i$. Then $G_i$ has the following form.

If $\Psi_i$ contains no bands, then by Proposition~\ref{prop:NdiskR}, $G_i$ can only be certain unions of the following graphs: the arcs coming from disks in $C_{\pm}$, the figures ``$\textsf{Y}$'' coming from disks in $T_{\pm}$, the points coming from disks in $F_-^+$ or $F_+^-$, and the arcs coming from disks in $C_-^+$ or $C_+^-$, where the arc contains a marked point. See Figure~\ref{fig:example3} for two examples of $G_i$ where we also give the over/under marks between the straight arcs indicating curved disks, the figure ``$\textsf{Y}$'', and the diagram $p(A_v)\subset D$.

\begin{figure}[h]
\includegraphics{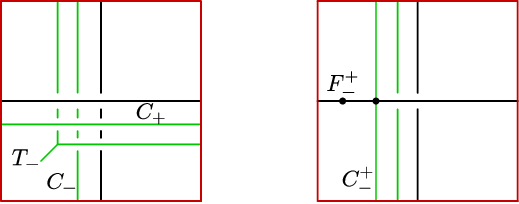}
\caption{Two examples of $G_i$. Black dots indicate $K\cap\Psi_i$.}\label{fig:example3}
\end{figure}

Note that the points in $G_i\cap\partial R_v$ correspond to the vertical arcs in $\partial\Psi_i$. These arcs also lie in adjacent 1-handles, so they have different images in $G_i\cap \partial R_v$. The intersection patterns between the above possible graphs in $G_i$ together with $p(A_v)$ can be determined by the positions of these vertical arcs and $K\cap\Psi_i$.

If $\Psi_i$ contains bands, then we can obtain $\Psi_i$ from some previous case by adding the bands inductively. Each time we add a band, a new straight arc will appear in $R_v$ near a corner. So $G_i$ can only be certain unions of above graphs together with some parallel straight arcs near the corners of $R_v$. See Figure~\ref{fig:example4} for two examples of $G_i$ in this case, where over/under marks are given as above.

\begin{figure}[h]
\includegraphics{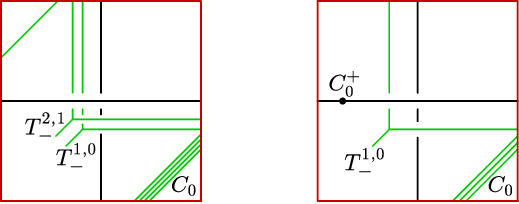}
\caption{Two examples of $G_i$ when $\Psi_i$ contains bands.}\label{fig:example4}
\end{figure}

Note that in each of the pictures in Figure~\ref{fig:example4}, there are more than one straight arcs near the bottom right corner of $R_v$. Only the outermost one indicates a band in some disk in $C_0$, while the innermost one indicates a band in some disk in $T_-^{1,0}$. The other ones indicate the bands in disks in $T_-^{2,1}$ and $C_0^+$, respectively.

\begin{proposition}\label{pro:diag}
Let $\Psi_i$, $\Phi_i$, $G_i$ be as above, $i=1,2$, and $R$ is some $R_e$ or $R_v$. If $\Phi_1=\Phi_2$, then there is an isotopy of $R$, which keeps $A_e$ or $p(A_v)$ and four sides of $R$ invariant, sending $G_1$ to $G_2$. Moreover, during this isotopy, the graphs in $G_1$ can always represent normal disks, and the over/under marks can be preserved.
\end{proposition}

The case when $R$ is some $R_e$ is clear. To show the case when $R$ is some $R_v$, we can use the classification of $\Phi_i$ described above. So $G_i$ can only be a certain union of the straight arcs, figures ``$\textsf{Y}$'', and marked points, which represent certain kinds of normal disks in $\Psi_i$. When there are no bands, the intersection pattern between these graphs in $G_i$, together with $p(A_v)$, can be determined by the vertical arcs in $\partial\Psi_i$ and the points in $K\cap\Psi_i$. Then, since $\Phi_i$ can determine the relative positions of these vertical arcs and points, $\Phi_i$ determines $G_i$, up to isotopy. When there are bands, the case is similar. Bands give the ``outer'' vertical arcs in $\partial\Psi_i$.

With Proposition~\ref{pro:diag}, we can finish the proof of Proposition~\ref{pro:multiD} as in the case of Proposition~\ref{pro:isotop}. We say that such $\Psi_1$ and $\Psi_2$ are {\it equivalent}. So we can classify all equivalence classes of unions of normal disks in $[-1,1]\times R$, as described above. By Proposition~\ref{pro:diag}, we see that if $\Psi_1$ and $\Psi_2$ are equivalent, then $G_1$ and $G_2$ are essentially the same. In many cases, the converse is also true.

According to the above discussion about $G_i$, we can regard $G_i$ as a graph in $R$, whose vertices have degree at most $4$. Those endpoints of straight arcs and figures ``$\textsf{Y}$'' are the vertices of degree $1$. The branch points of figures ``$\textsf{Y}$'' give the vertices of degree $3$. The marked points $p(K\cap\Psi_i)$ are those vertices of degree $0$ or $2$. The intersection points are the vertices of degree $4$. Then we have the following.

\begin{proposition}\label{pro:recon}
If the two graphs $G_1$ and $G_2$ coincide, then after removing all the flat disks in $\Psi_i$ which do not meet $K$, $\Psi_1$ and $\Psi_2$ are equivalent. Moreover, $\Psi_i$ can contain disks in $F_0$ only if $G_i=\emptyset$ or $G_i$ only represents disks in $C_{\pm}$.
\end{proposition}

Similar to Proposition~\ref{prop:NdiskR}, we can show that disks in $F_0$ and normal disks with bands must intersect. So, by Proposition~\ref{prop:NdiskR}, the ``moreover'' part holds. From $G_i$ we can first recover the straight arcs, figures ``$\textsf{Y}$'', and marked points, where those arcs indicating bands can be identified. If there are no bands, then we can recover all the normal disks in $\Psi_i$ except the flat disks which do not meet $K$. When there exist bands, the possible flat disks in $\Psi_i$ belong to $F_{\pm}$. We can recover the normal disks obtained from $\Psi_i$ by removing the bands, up to disks in $F_{\pm}$. Then according to the arcs indicating bands, we can add the bands inductively starting from some ``innermost'' one. The way to add each band is essential unique.

Note that when $R$ is some $R_v$, from the graph $G_i$ together with the over/under mark at $v$, we can also recover all the over/under marks between the straight arcs indicating disks in $C_{\pm}$, the figures ``$\textsf{Y}$'', and the diagram $p(A_v)$. If the straight arc (or a branch of ``$\textsf{Y}$'') intersects an arc in $p(A_v)$ with over (resp. under) mark, then it only has under (resp. over) marks; see Figure~\ref{fig:example3} and Figure~\ref{fig:example4}.

\begin{definition}\label{def:diagram}
Let $\Psi$ be a union of disjoint normal disks in $[-1,1]\times R$, where $R$ is some $R_e$ or $R_v$, and let $G$ be the graph induced by the union of vertical disks in $\Psi$ and $K\cap\Psi$.
We call $G\subset R$ with the over/under marks a {\it diagram} of $\Psi$.

Let $S$ be a normal surface with respect to $\mathcal{H}_D$. Then $S$ intersects each possible $[-1,1]\times R$ in a union of disjoint normal disks. Let $G_S$ denote the union of all the diagrams of $S\cap[-1,1]\times R$. We call $G_S\subset S_0$ a {\it diagram} of $S$.
\end{definition}

Now let $\mathcal{C}$ denote the class of closed surfaces $S\subset S^3$ which satisfy the following conditions: $S$ does not intersect 3-handles of $\mathcal{H}_D$; $S$ intersects each $[-1,1]\times R_f$ in a union of disjoint normal disks; $S$ intersects any $[-1,1]\times A_e$ transversely in some horizontal arcs; and $S$ intersects any $[-1,1]\times\{v\}$ transversely in some points. We use $I_f(S)$, $I_e(S)$, and $I_v(S)$ to denote the numbers of the normal disks, horizontal arcs, and points, respectively. Then we give the following definition.

\begin{definition}\label{def:complex}
For a closed surface $S\subset S^3$ which lies in $\mathcal{C}$, let
\[I_2(S)=\sum I_f(S),\quad I_1(S)=\sum I_e(S),\quad I_0(S)=\sum I_v(S),\]
where the three sums are over all $f$, $e$, and $v$, respectively. Then we call the triple $(I_2(S),I_1(S),I_0(S))$ in $\mathbb{Z}^3$ the {\it complexity} of $S$ and denote it by $\mathcal{I}(S)$. Here we use the dictionary order on $\mathbb{Z}^3$, which is denoted by ``$\preceq$''.
\end{definition}

Clearly all normal surfaces lie in $\mathcal{C}$. In later sections, we will use the complexity many times when we simplify a maximal sphere system of $K$. We will convert the union of spheres in the system into a normal surface with good properties.


\section{Simplifications of maximal sphere systems of $K$}\label{sec:SF}
Let $K$, $D$, and $\mathcal{H}_D$ be as in previous sections. Let $\Sigma$ be the union of spheres in a maximal sphere system $\mathcal{S}=\{S_1,\ldots,S_{n-1}\}$ of $K$ where $n>1$. The main goal of this section is to prove the following theorem.

\begin{theorem}\label{thm:normalForm}
Up to isotopy and $D^2$-surgery, any $\mathcal{S}$ can be converted into a new maximal sphere system of $K$ where the corresponding $\Sigma$ becomes a normal surface with respect to $\mathcal{H}_D$. Also, $\mathcal{I}(\Sigma)$ does not increase in the process when $\Sigma$ lies in $\mathcal{C}$.
\end{theorem}

The proof consists of three parts, which are shown in Sections~\ref{subsec:eleSimp}, \ref{subsec:Moves}, and \ref{subsec:ParaofD}, respectively. In Section~\ref{subsec:eleSimp}, we modify $\Sigma$ so that it lies in $\mathcal{C}$, it meets 1-handles of $\mathcal{H}_D$ in normal disks, and it meets the blocks obtained by cutting each $[-1,1]\times R_v$ along $S_D$ in certain types of disks. In Sections~\ref{subsec:Moves} and \ref{subsec:ParaofD}, we simplify these disks and glue them together to give normal disks in the 0-handles.


\subsection{Preliminary simplifications}\label{subsec:eleSimp}
We first modify $\Sigma$ so that it has the required form except in 0-handles of $\mathcal{H}_D$, and in the 0-handles it is relatively simple.

\vspace{5pt}

\noindent {\bf Step 1.} We modify $\Sigma$ so that it lies in $\mathcal{C}$.

In each of $B_-$ and $B_+$, we can find a small 3-ball disjoint from $\Sigma$. Then we can have an isotopy of $S^3$, so that it sends the two 3-balls to $B_-$ and $B_+$, respectively, and it keeps the points outside some small neighborhood of $B_-\cup B_+$ fixed. Then, by restricting this isotopy to $\Sigma$, we can push $\Sigma$ off the 3-handles of $\mathcal{H}_D$.

For each $[-1,1]\times R_f$, we choose a point $r$ in the interior of $R_f$. We can require that $\Sigma$ meets $[-1,1]\times\{r\}$ transversely, and near each intersection point, $\Sigma$ lies in some $\{t\}\times R_f$. Then we can find a small disk $D^2$ centered at $r$ such that $\Sigma$ meets $[-1,1]\times D^2$ in the disks having the form $\{t\}\times D^2$. We can have an isotopy of $S_0$, so that it sends $D^2$ to $R_f$, and it keeps the points outside a small neighborhood of $R_f$ fixed. This gives an isotopy of $[-1,1]\times S_0$, and we can extend it to an isotopy of $S^3$. Then by restricting the isotopy to $\Sigma$, we can isotope $\Sigma$ so that it meets the 2-handles of $\mathcal{H}_D$ in unions of disjoint normal disks.

We identify $([-1,1]\times R_e, A_e)$ with $([-1,1]\times[-1/2,1/2],\{(0,0)\})\times[-1,1]$. For each 1-handle of $\mathcal{H}_D$, we can require that $\Sigma$ meets $[-1,1]\times\{0\}\times\{0\}$ transversely in some points other than $(0,0,0)$. The complement of a small neighborhood of
\[(\partial([-1,1]\times[-1/2,1/2])\times[-1,1])\cup([-1,1]\times[-1/2,1/2]\times\{0\})\]
in the $1$-handle is a union of two 3-balls. So we can have an isotopy of $S^3$, so that it sends the two 3-balls into the adjacent 0-handles, respectively, it fixes the other handles, and it keeps $K$ invariant. By restricting this isotopy to $\Sigma$, we can isotope $\Sigma$ so that it meets $[-1,1]\times A_e$ in some horizontal arcs.

We can further isotope the part of $\Sigma$ in each $([-1,1]\times R_v,A_v)$ so that it meets $[-1,1]\times\{v\}$ transversely in some points other than $(\pm1/2,v)$. So, the resulting $\Sigma$ lies in $\mathcal{C}$, and the complexity $\mathcal{I}(\Sigma)$ can be defined.

Note that if $\Sigma$ already lies in $\mathcal{C}$, then in this process $\mathcal{I}(\Sigma)$ does not change, and the intersections $\Sigma\cap[-1,1]\times A_e$ and $\Sigma\cap[-1,1]\times\{v\}$ will avoid the points in $K$. Also, we will use $D^2$-surgeries below, where $\mathcal{I}(\Sigma)$ does not increase.

\vspace{5pt}

\noindent {\bf Step 2.} We modify $\Sigma$ so that it meets each 1-handle in disjoint normal disks.

Under the above identification, for each 1-handle of $\mathcal{H}_D$, we can further require that $\Sigma$ meets $[-1,1]\times[-1/2,1/2]\times\{0\}$ transversely and has the form $\alpha\times(-\epsilon,\epsilon)$ near the rectangle, where $\epsilon>0$ and $\alpha$ is a union of disjoint circles and proper arcs in $[-1,1]\times[-1/2,1/2]$. If there are circles, then in $\alpha\times\{0\}$, we have an innermost circle which bounds a disk $D^2$. By applying the $D^2$-surgery, we see that all circles can be eliminated. So by repeating Step~1, we have a new $\Sigma$ lying in $\mathcal{C}$, where the corresponding $\alpha$ can only contain disjoint proper arcs. See Figure~\ref{fig:arcin1}.

\begin{figure}[h]
\includegraphics{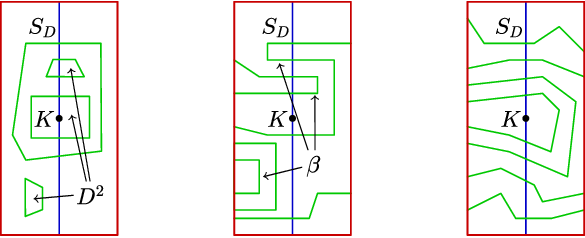}
\caption{Examples of $\alpha\times\{0\}$ in $[-1,1]\times[-1/2,1/2]\times\{0\}$.}\label{fig:arcin1}
\end{figure}

Note that $S_D$ splits $[-1,1]\times[-1/2,1/2]\times\{0\}$ into two rectangles, and it splits an arc in $\alpha\times\{0\}$ into several arcs. Let $\beta=\gamma\times\{0\}$ be an outermost arc in one of the rectangles, where $\partial\beta$ lies in one edge. If $\partial\beta$ lies in the boundary of the handle, then there are two parallel normal disks adjacent to $\beta$. We can isotope $\gamma\times(-\epsilon,\epsilon)$ across the region between the two disks. This reduces $I_2(\Sigma)$. If $\partial\beta$ lies in $S_D$, and the two points lie on the same side of $(0,0,0)$, then we can isotope $\beta$ across $S_D$ so that it lies in the other rectangle. By repeating Step~1, this reduces $I_1(\Sigma)$. Hence, we can require that the two points in $\partial\beta$ lie on different sides of $(0,0,0)$.

Then, since there are no circles in $\alpha$, only one of the two rectangles can contain arcs whose two endpoints lie in the same edge. So, $\alpha\times\{0\}$ contains at most three types of arcs, as shown in the right picture of Figure~\ref{fig:arcin1} up to mirror image. Then we can isotope $\alpha\times\{0\}$ so that each proper arc in it consists of three straight arcs as described before Figure~\ref{fig:surin1}. By suitably modifying normal disks in 2-handles, we can also require that each horizontal arc lies in $\{t\}\times[-1/2,1/2]\times\{0\}$ for some $t$. Then, by repeating Step~1, we can get the required normal disks.

Finally, we note that while the isotopy reducing $I_1(\Sigma)$ does not affect the parts of $\Sigma$ in other 1-handles, the isotopy reducing $I_2(\Sigma)$ does. If $\Sigma$ meets a 1-handle in disjoint normal disks, then these disks can be destroyed when we simplify the part of $\Sigma$ in another 1-handle. So for the first 1-handle, we need to repeat the previous process. Until $\mathcal{I}(\Sigma)$ cannot be reduced, we can finish Step~2.

\vspace{5pt}

\noindent {\bf Step 3.} We simplify the intersection $\Sigma\cap S_D\cap[-1,1]\times R_v$ for each $v$.

The arc $[-1,1]\times\{v\}$ splits $S_D\cap[-1,1]\times R_v$ into four rectangles, and $K$ splits each of these rectangles into two quadrilaterals. Since $\Sigma$ intersects $K$ transversely, we can require that $\Sigma$ meets all these quadrilaterals transversely in disjoint circles and proper arcs. Then by applying the $D^2$-surgery as above, we can eliminate the circles. Because with points in $K$ marked, the four rectangles in $S_D$ are isometric, below it suffices to consider one of them. For example, consider the left one in the right picture of Figure~\ref{fig:handle}. Let $\alpha$ be the intersection of this rectangle with $\Sigma$. Then $K$ splits the components of $\alpha$ into several arcs. See Figure~\ref{fig:arcinSD}.

\begin{figure}[h]
\includegraphics{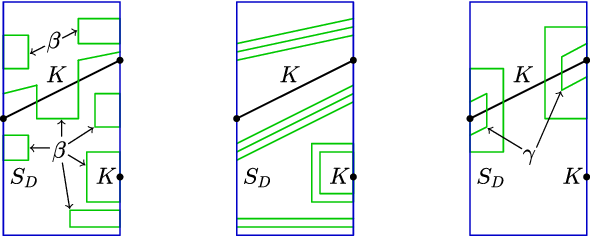}
\caption{Some examples of $\alpha$, $\beta$, and $\gamma$ in a rectangle in $S_D$.}\label{fig:arcinSD}
\end{figure}

Let $\beta$ be an outermost arc in one of the two quadrilaterals where $\partial\beta$ lies in one edge. See the left picture of Figure~\ref{fig:arcinSD} for some possible $\beta$. If $\partial\beta$ lies in $K$, then $\beta$ cuts off a disk whose boundary is a connected summand of $K$. Since it contradicts the assumption that $\mathcal{S}$ is a maximal sphere system of $K$, this case cannot happen. If $\partial\beta$ lies in the boundary of the handle, then $\beta$ meets two normal disks, which lie in the adjacent 1-handle and may be flat or curved. There is an $I$-bundle over the corresponding $A_e$ lying between the two disks. Along the $I$-bundle, we can isotope a small neighborhood of $\beta$ in $\Sigma$ across the 1-handle. This reduces $I_1(\Sigma)$. Then, by repeating Step~2, we get a new $\Sigma$ that meets 1-handles in disjoint normal disks. If $\partial\beta$ lies in one of the three components of $[-1,1]\times\{v\}\setminus K$, then we can isotope $\beta$ across $[-1,1]\times\{v\}$ so that it lies in another rectangle. This reduces $I_0(\Sigma)$.

After reducing $\mathcal{I}(\Sigma)$, we eliminate the circles in the quadrilaterals as before. So we can require that $\partial\beta\subset[-1,1]\times\{v\}$, and the two points lie on different sides of some point in $K$. Now if $\alpha\cap K=\emptyset$, then $\alpha$ contains at most four types of arcs, as shown in the middle picture of Figure~\ref{fig:arcinSD}. Since we have seen that a component of $\alpha$ can intersect $K$ at most once, $\alpha$ consists of proper arcs.

Let $\gamma$ be an outermost arc in the rectangle, where $\gamma\cap K\neq\emptyset$ and $\partial\gamma$ lies in one edge. See the right picture of Figure~\ref{fig:arcinSD} for two possible $\gamma$. Then, as in the case of $\beta$, if $\partial\gamma$ lies in the boundary of the handle, then $\gamma$ meets either one curved disk or two flat disks. There is an $I$-bundle in the adjacent 1-handle, and along it, we can isotope a small neighborhood of $\gamma$ in $\Sigma$ across the 1-handle to reduce $I_1(\Sigma)$. Then by repeating Step~2, we get a new $\Sigma$ as before. If $\gamma$ meets $(-1/2,1/2)\times\{v\}$, then we can isotope $\gamma$ across $[-1,1]\times\{v\}$ so that it lies in another rectangle, as before. This reduces $I_0(\Sigma)$. After reducing $\mathcal{I}(\Sigma)$, we eliminate the circles and certain arcs in the quadrilaterals as in previous processes. So we can require that the arcs in $\alpha$ that meet $K$ have the four types shown in Figure~\ref{fig:arcT1234}.

Note that only the arc of type-2 has endpoints lying in the same edge; only the arc of type-3 meets $(-1/2,1/2)\times\{v\}$; and the arc of type-1 and the arc of type-4 can be distinguished according to whether the intersections of the arc with $K$ and $[-1,1]\times\{v\}$ lie on different sides of $S_0$. Also, if two different types of arcs appear simultaneously, then the case must be one of the three shown in Figure~\ref{fig:arcT1234}. Hence by combining the possible arcs in $\alpha$ which do not meet $K$, $\alpha$ contains at most five different types of arcs. The possible cases can be listed easily.

\begin{figure}[h]
\includegraphics{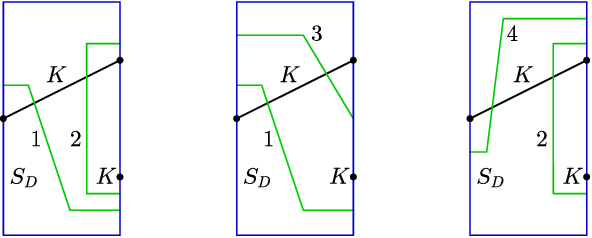}
\caption{The four types of the arcs in $\alpha$ that meet $K$.}\label{fig:arcT1234}
\end{figure}

Finally, we note that as in Step~2, the isotopies reducing $I_1(\Sigma)$ and $I_0(\Sigma)$ affect the parts of $\Sigma$ in other handles and the intersections of $\Sigma$ with other rectangles in $S_D$. Until $\mathcal{I}(\Sigma)$ cannot be reduced, we can finish Step~3.

\vspace{5pt}

\noindent {\bf Step 4.} We simplify the parts of $\Sigma$ in the components of $[-1,1]\times R_v\setminus S_D$.

The skeleton $S_D$ splits $[-1,1]\times R_v$ into four blocks. With points in $K$ marked, the four blocks differ by $\pi$-rotations about $[-1,1]\times\{v\}$ or reflections across those rectangles in $S_D$. So it suffices to consider one of them. For example, consider the one in front of the left rectangle in Figure~\ref{fig:handle}. Let $S^2$ be its boundary sphere. Since $\Sigma$ intersects $S^2$ transversely, we can find a neighborhood of $S^2$ in the block, which can be identified with $S^2\times[0,1)$, so that it meets $\Sigma$ in $(\Sigma\cap S^2)\times[0,1)$. Hence, $\Sigma$ meets $S^2\times\{1/2\}$ transversely in disjoint circles, and by applying $D^2$-surgeries, we can eliminate them. In this process, the circles in $\Sigma\cap S^2$ will either be eliminated or bound some disks in $S^2\times[0,1/2)$. Since the resulting $\Sigma$ cannot meet the 3-ball bounded by $S^2\times\{1/2\}$ in the block, it meets the block only in the disks.

Consider an arc lying in some edge of a quadrilateral in $S^2$ so that either it lies in $(-1/2,1/2)\times\{v\}$ or it does not meet $(-1/2,1/2)\times\{v\}$. If there is such an arc intersecting a component of $\Sigma\cap S^2$ at least twice, then it must contain a subarc $\delta$ so that $\delta\cap(\Sigma\cap S^2)=\partial\delta$ and $\delta$ meets only one component of $\Sigma\cap S^2$.

In the block, the component meeting $\delta$ bounds a disk in $\Sigma$. Let $D^2$ be the disk, let $\delta_1$ be a close parallel copy of $\delta$ in the quadrilateral so that $\delta_1\cap\partial D^2=\partial\delta_1$, and let $\delta_2$ be a proper arc in $D^2$ so that $\partial\delta_2=\partial\delta_1$. Note that $D^2$ splits the block into two 3-balls. In one of the 3-balls, $\delta_1\cup\delta_2$ bounds a properly embedded disk $D^2_1$, so that $D^2_1\cap\Sigma=\delta_2$. So we can isotope $\delta_2$ across $D^2_1$ to change the pattern of $\Sigma\cap S^2$ in the quadrilateral. See the left picture of Figure~\ref{fig:pushdisk}, where the pattern of $\Sigma\cap S^2$ in the larger quadrilateral has been changed.

This isotopy provides an outermost arc $\beta$ in the quadrilateral, whose endpoints lie in the same edge. By repeating Step~3, we see that $\partial\beta$ (and $\delta$) cannot lie in $K$, and $\mathcal{I}(\Sigma)$ is reduced. Then by repeating the previous process, we can get a new $\Sigma$ which meets the block in disjoint disks. So, we can require that there is no such $\delta$. It is convenient to view the longer edge in $[-1,1]\times\{v\}$ as two edges meeting at a point in $K$. Then a component of $\Sigma\cap S^2$ can intersect any edge of a quadrilateral at most once. In particular this means that each component of $\Sigma\cap S^2$ meets each smaller quadrilateral in at most one arc.

\begin{figure}[h]
\includegraphics{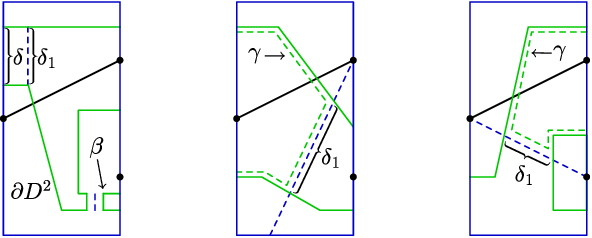}
\caption{The pattern of $\Sigma\cap S^2$ can be changed by an isotopy.}\label{fig:pushdisk}
\end{figure}

If a component of $\Sigma\cap S^2$ meets a larger quadrilateral in at least two arcs, then it must be one of the two cases shown in the right two pictures of Figure~\ref{fig:pushdisk}. This is because the endpoints of those arcs must lie in different edges, where the longer edge in $[-1,1]\times\{v\}$ is viewed as two edges. Then, the arc meeting $K$ must give a type-3 or type-4 arc in the rectangle. Now, consider a straight arc as shown in the pictures. It divides the four edges into two pairs of edges. So it must intersect the component. Up to isotopy, we can require that the arcs from $\Sigma\cap S^2$ have minimal intersections with it. Since it intersects the component twice, it has a subarc $\delta_1$ so that $\delta_1\cap(\Sigma\cap S^2)=\partial\delta_1$ and $\delta_1$ intersects only one component of $\Sigma\cap S^2$. Then $\delta_1$ meets two arcs from the component with the same types as before. As in previous processes, we can change the pattern of $\Sigma\cap S^2$ in the quadrilateral, and $\mathcal{I}(\Sigma)$ can be reduced. The only difference is that now the isotopy provides an outermost arc $\gamma$ in the rectangle, where $\gamma\cap K\neq\emptyset$ and $\partial\gamma$ lies in one edge.

Then, by repeating the previous processes, we can require that each component of $\Sigma\cap S^2$ meets each quadrilateral in at most one arc. Finally, note that as in the previous steps, the isotopies reducing $\mathcal{I}(\Sigma)$ affect those parts of $\Sigma$ in other handles or blocks. Until $\mathcal{I}(\Sigma)$ cannot be reduced, we can finish Step~4.

\vspace{5pt}

\noindent {\bf Step 5.} We classify the possible types of the components of $\Sigma\cap S^2$.

Note that $S^2$ consists of two squares meeting 3-handles, two rectangles meeting 2-handles, and two rectangles in $S_D$. For simplicity, we view the block from above so that in a picture, the square meeting $B_+$ is larger than the square meeting $B_-$; see the figures below. Let $\Delta$ be a component of $\Sigma\cap S^2$. Then, by Step~4, $\Delta$ meets each quadrilateral in at most one arc; by Step~2, in a rectangle meeting 2-handles, the arcs in $\Delta$ come from adjacent normal disks. So, we can list the possible $\Delta$, up to isotopy. According to the number of points in $\Delta\cap K$, we have three cases.

Case 1. $|\Delta\cap K|=2$.

Since $\Delta$ cannot meet an arc in $K\cap S^2$ twice, it meets each of the two arcs, and in the rectangles in $S_D$, it must have one of the four types shown in Figure~\ref{fig:arcT1234}. So according to the types, there are four possibilities, as shown in Figure~\ref{fig:cirT2}.

In the block, $\Delta$ bounds a disk in $\Sigma$. Let $D^2$ be the disk, and assume that it lies in the sphere $S_j\subseteq\Sigma$. Then $S_j$ meets $K$ in $\Delta\cap K$ and it splits $K$ into two arcs $E_1$ and $E_2$. Let $\mu$ be a proper arc in $D^2$ which connects the two points in $\Delta\cap K$. We have two knots $\widehat{E}_i=E_i\cup\mu$ where $i=1,2$, and $K=\widehat{E}_1\#\widehat{E}_2$; see the right picture of Figure~\ref{fig:cirT2} for an example. Up to isotopy we can require that $\mu$ is a straight arc. So for $i=1,2$, by projecting $\widehat{E}_i$ to $S_0$, we can obtain a diagram $Q_i$. If $\Delta$ does not contain arcs of type-3, then $Q_1$ or $Q_2$ is not minimal. If $\Delta$ contains arcs of type-3, then the crossing in the 0-handle is no longer a crossing in $Q_1$ and $Q_2$. So in each case we can construct a diagram of $K$, with fewer crossings than $D$, from those of $\widehat{E}_1$ and $\widehat{E}_2$. This contradiction means that Case~1 cannot happen.

\begin{figure}[h]
\includegraphics{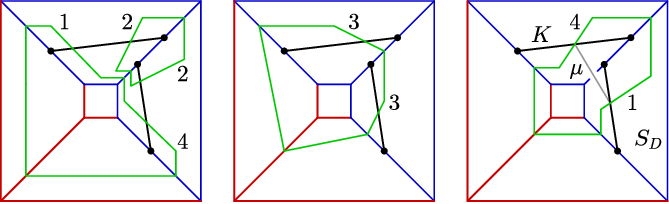}
\caption{The four possible $\Delta$ with $|\Delta\cap K|=2$.}\label{fig:cirT2}
\end{figure}

Case 2. $|\Delta\cap K|=1$.

First assume that $\Delta$ meets the upper arc of $K\cap S^2$. Then corresponding to the four types of arcs shown in Figure~\ref{fig:arcT1234}, there are also four possibilities, as shown in Figure~\ref{fig:cirT1}, and we use $1^+$, $2^+$, $3^+$, and $4^+$ to denote their types, respectively. The pictures in Figure~\ref{fig:cirT1} also give all those cases when two components with different types appear simultaneously. Similarly, there are types $1^-$, $2^-$, $3^-$, and $4^-$ for the components which meet the lower arc of $K\cap S^2$. Also, note that there can be two components which meet different arcs of $K\cap S^2$. All the possible sets of the types are $\{1^+,1^-\}$, $\{3^+,3^-\}$, $\{3^+,4^-\}$, $\{4^+,3^-\}$, and $\{4^+,4^-\}$.

\begin{figure}[h]
\includegraphics{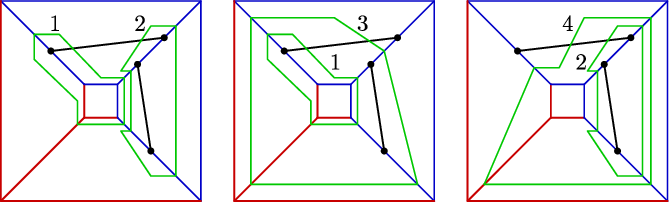}
\caption{The four possible $\Delta$ with $|\Delta\cap K|=1$.}\label{fig:cirT1}
\end{figure}

Case 3. $|\Delta\cap K|=0$.

Note that $\Delta\cap[-1,1]\times\{v\}$ must be nonempty and it has at most three points. According to the number of points in it we have three cases, which will be related to the flat, curved, and twisted disks, respectively.

Let $(t,v)$ denote a point in $[-1,1]\times\{v\}\setminus K$. It has one of the types $-$, $0$, or $+$ if $t<-1/2$, $|t|<1/2$, or $t>1/2$, respectively. Since $\Delta$ meets those quadrilaterals in at most four types of arcs as shown in the middle picture of Figure~\ref{fig:arcinSD}, it is not hard to give all possible patterns of $\Delta$. If $\Delta$ meets $[-1,1]\times\{v\}$ in one point, then there are three types $\mathrm{I}_-$, $\mathrm{I}$, $\mathrm{I}_+$, which correspond to those three types of the point, respectively. If $\Delta$ meets $[-1,1]\times\{v\}$ in two points, then we also have three types $\mathrm{II}_-$, $\mathrm{II}$, $\mathrm{II}_+$, which correspond to the three sets of types, $\{-,0\}$, $\{-,+\}$, $\{0,+\}$, of the two points, respectively. If $\Delta$ meets $[-1,1]\times\{v\}$ in three points, then there is only one type $\mathrm{III}$. See Figure~\ref{fig:cirT0} for the seven types of $\Delta$.

\begin{figure}[h]
\includegraphics{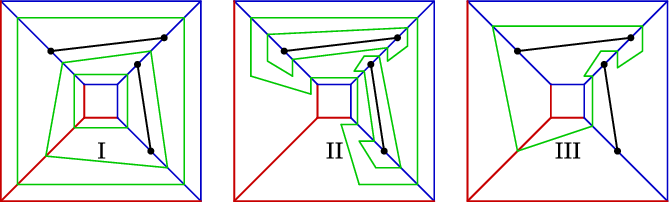}
\caption{The seven possible $\Delta$ with $|\Delta\cap K|=0$.}\label{fig:cirT0}
\end{figure}

Note that any two of the types $\mathrm{I}$, $\mathrm{II}$, and $\mathrm{III}$ cannot appear simultaneously, and each of the types $\mathrm{I}_-$, $\mathrm{I}_+$, $\mathrm{II}_-$, and $\mathrm{II}_+$ can appear simultaneously with other types. The relations between the types in Case~2 and Case~3 can also be listed easily.

\vspace{5pt}

\noindent {\bf Summary.} By Steps~1-5 we can require that $\Sigma$ does not meet 3-handles, it meets 2-handles and 1-handles in disjoint normal disks, and it meets the blocks obtained by cutting 0-handles along $S_D$ in some disjoint disks, where the boundaries of the disks have types $1^{\pm}$, $2^{\pm}$, $3^{\pm}$, $4^{\pm}$, $\mathrm{I}_{\pm}$, $\mathrm{II}_{\pm}$, $\mathrm{I}$, $\mathrm{II}$, and $\mathrm{III}$.

\begin{definition}\label{def:tauP}
We call a disk component of the intersection of $\Sigma$ with a block a {\it $\tau$-disk} if its boundary circle has type $\tau$. If such a $\tau$-disk intersects $K$, then we call the intersection point in $K$ a {\it $\tau$-point}.
\end{definition}


\subsection{Moves of the $\tau$-points}\label{subsec:Moves}
Now we isotope $\Sigma$ so that the $\tau$-points move along $K$. Then the types will change. There are two ways to define such an isotopy, and we can use it to eliminate $1^-$, $2^-$, $3^{\pm}$, $4^{\pm}$, or $1^+$, $2^+$, $3^{\pm}$, $4^{\pm}$, respectively.

\vspace{5pt}

First note that if a $\tau$-disk $D^2_1$ intersects $K$, then it intersects the rectangle that contains the $\tau$-point in an arc whose type determines $\tau$. Then, since the rectangle is shared by two blocks, the arc must also lie in another $\tau$-disk $D^2_2$. So, we can get a larger disk $D^2_1\cup D^2_2$, which intersects $K$ transversely in the $\tau$-point. We call this disk an {\it enlarged $\tau$-disk}. We will isotope it to move the $\tau$-point.

To define the isotopy, we first choose a vertical direction of $[-1,1]\times S_0$, namely ``up'' or ``down''. Then we can locally push $\Sigma$ along the direction. The two choices will give parallel consequences, so we just choose ``up''.

There will be two kinds of isotopies, called the {\it 0-moves} and {\it 1-moves}. A 0-move will happen in a 0-handle, while a 1-move will pass through a 1-handle. Note that by Proposition~\ref{prop:goodhandle}, each 1-handle meets two 0-handles. The 1-move will affect the parts of $\Sigma$ in the 1-handle together with the adjacent two 0-handles.

To give the moves, it suffices to focus on the intersection $\Sigma\cap S_D$. Let $D^2$ be an enlarged $\tau$-disk in some $[-1,1]\times R_v$, and let $P=D^2\cap K$. Let $\Pi$ be the rectangle in $S_D$ that contains $P$, and let $\Lambda=D^2\cap\Pi$. Then a 0-move (resp. 1-move) can be applied if $\tau$ is one of $3^+$, $2^-$, or $4^-$ (resp. $4^+$, $1^-$, or $3^-$), and $\Lambda$ cuts off a disk $D^2_0$ from the lower quadrilateral in $\Pi$ so that $D^2_0\cap B_-=\emptyset$ and $D^2_0\cap\Sigma\subset\Lambda$.

\vspace{5pt}

\noindent {\bf Case 1.} The conditions for the 0-move hold.

In $[-1,1]\times R_v$, there exists a rectangle $\Pi'$ which differs from $\Pi$ by a $\pi$-rotation about $[-1,1]\times\{v\}$. To apply the 0-move, we isotope the part of $\Lambda$ that lies in the lower quadrilateral across $[-1,1]\times\{v\}$ via the disk $D^2_0$. Then $P$ is moved into $\Pi'$, and one intersection point in $\Sigma\cap[-1,1]\times\{v\}$ is ``moved up''; see Figure~\ref{fig:0move} for an example, where $\tau$ is $2^-$. Note that if $\tau$ is $3^+$, $2^-$, or $4^-$, then the $\tau$-disk in each of the blocks containing $\Pi$ becomes a $\mathrm{I}_+$-disk, $\mathrm{II}_+$-disk, or $\mathrm{I}$-disk, respectively. In the blocks containing $\Pi'$, the two disks from $\Sigma$ which meet $D^2_0$ become two new disks, and each of the new disks intersects $K$.

\begin{figure}[h]
\includegraphics{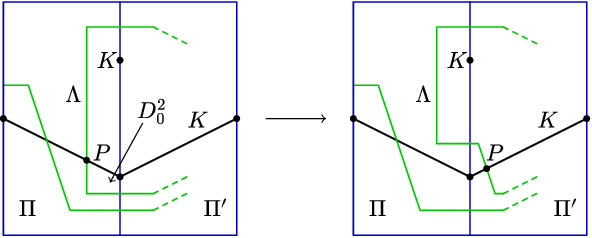}
\caption{An example of a 0-move where the type $\tau$ is $2^-$.}\label{fig:0move}
\end{figure}

Now consider a block containing $\Pi'$. Let $D^2_1$ be the disk from $\Sigma$ that meets $D^2_0$, and let $D^2_2$ be the new disk. So, by applying the 0-move, $D^2_1$ becomes $D^2_2$. Denote by $\Pi_0$ the other rectangle in $S_D$ that lies in the block. Then $D^2\cap\Pi_0$ gives an arc in $\partial D^2_1$ which does not meet $K$. So $\partial D^2_1\cap\Pi_0\cap K=\emptyset$. If $\partial D^2_1\cap\Pi'\cap K\neq\emptyset$, then $\partial D^2_1\cap\Pi'$ cuts off a disk from the low quadrilateral in $\Pi'$ where the disk meets $D^2_0$ in an arc. So their union is a disk, which gives a trivial connected summand of $K$. The contradiction means that $\partial D^2_1\cap K=\emptyset$. So, the type of $\partial D^2_1$ is one of $\mathrm{I}_{\pm}$, $\mathrm{II}_{\pm}$, $\mathrm{I}$, $\mathrm{II}$, and $\mathrm{III}$. Then we can find the type of $\partial D^2_2$ case-by-case.

If $\tau$ is $3^+$, then $\partial D^2_1$ has type $\mathrm{II}_-$ or $\mathrm{I}$. So, corresponding to the two types, $\partial D^2_2$ has type $2^+$ or $4^+$, respectively. If $\tau$ is $2^-$, then $\partial D^2_1$ has type $\mathrm{I}_-$ or $\mathrm{II}$. Then $\partial D^2_2$ has type $3^-$ in the case of $\mathrm{I}_-$. The case of $\mathrm{II}$ cannot happen, because there are no circles in the intersection of $\Sigma$ with a square in $[-1,1]\times\partial R_v$. Similarly, if $\tau$ is $4^-$, then $\partial D^2_1$ has type $\mathrm{I}_-$ or $\mathrm{II}$, and $\partial D^2_2$ has type $3^-$ in the case of $\mathrm{I}_-$. In the case of $\mathrm{II}$, $\partial D^2_2$ meets the larger quadrilateral in $\Pi'$ in two arcs. By Step~4, $\mathcal{I}(\Sigma)$ can then be reduced. So, by repeating previous steps, we can require that the case does not happen. Hence $P$ becomes some $\tau'$-point, and we have the following.

\begin{proposition}\label{prop:0move}
A 0-move converts a $\tau$-point into some $\tau'$-point, so that

(1) if $\tau$ is $3^+$, then $\tau'$ is $2^+$ or $4^+$;

(2) if $\tau$ is $2^-$, then $\tau'$ is $3^-$;

(3) if $\tau$ is $4^-$, then $\tau'$ is $3^-$.
\end{proposition}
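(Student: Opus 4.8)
The plan is to prove Proposition~\ref{prop:0move} by a direct inspection of the local picture in the $0$-handle, carried out separately for each admissible starting type. I would fix the model $[-1,1]\times R_v$ in which the vertical arc $[-1,1]\times\{v\}$ cuts $S_D$ into four rectangles meeting along it like the pages of a book, with $\Pi$ and its $\pi$-rotated image $\Pi'$ forming one antipodal pair; recall that $\Pi$ and $\Pi'$ are each shared by two of the four blocks, and that $K$ divides each rectangle into an upper and a lower quadrilateral. By hypothesis a $0$-move is available, so $\Lambda=D^2\cap\Pi$ cuts off a disk from the lower quadrilateral of $\Pi$ avoiding $B_-\cup(\Sigma\setminus\Lambda)$, and the move pushes the subarc of $\Lambda$ lying in that quadrilateral across $[-1,1]\times\{v\}$ into $\Pi'$, carrying the $\tau$-point $P$ with it.

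As recorded just before the statement, in the two blocks adjacent to $\Pi$ the point $P$ departs, so the two half-disks of $D^2$ lose their intersection with $K$ and acquire the Case~3 boundary types $\mathrm{I}_+$, $\mathrm{II}_+$, $\mathrm{I}$ for $\tau$ equal to $3^+$, $2^-$, $4^-$ respectively. The assertion of the proposition concerns the two blocks adjacent to $\Pi'$, into which $P$ arrives: there the disk that previously met the subarc of $\Lambda$ now picks up $P$, and I must read off the type $\tau'$ of the new boundary from the classification of Figure~\ref{fig:arcT1234}. The core of the argument is therefore to push $\Lambda$ into $\Pi'$ in each case and compare the resulting arc, together with its position relative to $K$ and to $(-1/2,1/2)\times\{v\}$, against the four model types.

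I would then run the three cases. When $\tau$ is $2^-$ or $4^-$, the endpoint data of the pushed arc forces a type-$3$ configuration in $\Pi'$, and inspecting which arc of $K\cap S^2$ the new component meets shows the superscript stays $-$, so $\tau'=3^-$; the one delicate point, occurring only for $\tau=4^-$, is when the disk met by the pushed subarc has type $\mathrm{II}$, in which case the move would leave some quadrilateral met in more than one arc, so that $\mathcal{I}(\Sigma)$ can be reduced by Step~4 and this configuration may be assumed not to occur. When $\tau$ is $3^+$, the far endpoint of the pushed arc can come to rest in two combinatorially distinct positions, producing either an arc with both endpoints on one edge (type~$2$) or an arc distinguished by its subarcs in the quadrilaterals (type~$4$), and in each case the superscript remains $+$, giving the two outcomes $2^+$ and $4^+$.

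I expect the main obstacle to be the $3^+$ case: verifying that \emph{exactly} the types $2^+$ and $4^+$ arise and nothing else, in particular ruling out a return to type~$3^+$ or a jump to $1^+$, and confirming that the superscript is preserved rather than flipped under the push. This rests on the distinguishing features isolated in Step~5 --- that type-$2$ arcs are characterized by having both endpoints on a single edge and type-$3$ arcs by meeting $(-1/2,1/2)\times\{v\}$ --- together with the coexistence constraints among the types listed there, and it is here that the bookkeeping of edge incidences before and after the isotopy must be done with care. The remaining verifications reduce to reading the figures.
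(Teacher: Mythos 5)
Your proposal is correct and follows essentially the same route as the paper: the paper's own proof is exactly a case-by-case inspection of the local move, organized by first noting that the old disk in the blocks adjacent to $\Pi'$ cannot meet $K$ and cannot have type $\mathrm{II}$, hence must have type $\mathrm{II}_-$, $\mathrm{I}$, or $\mathrm{I}_-$, and then reading off $\tau'$ in each case. Your bookkeeping by landing positions of the pushed arc is just a reparametrization of that same enumeration (the $2^+$/$4^+$ dichotomy for $\tau=3^+$ corresponds to the possible types of the old disk the pushed subarc merges with), and you correctly isolate the type-$\mathrm{II}$ exclusion via Step~4 for $\tau=4^-$.
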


\noindent {\bf Case 2.} The conditions for the 1-move hold.

Let $([-1,1]\times R_e, A_e)$ be the 1-handle meeting $\Pi$, and let $([-1,1]\times R_{v'}, A_{v'})$ be the other 0-handle adjacent to this 1-handle. In $[-1,1]\times R_{v'}$, there exists another rectangle $\Pi'\subset S_D$ meeting $[-1,1]\times R_e$. In $[-1,1]\times A_e$, there exists an arc $\omega\subset\Sigma$ meeting $\Lambda$, and between $\omega$ and $A_e$, we have a disk $D^2_e$ with $D^2_e\cap\Sigma=\omega$. To apply the 1-move, we isotope the part of $\Lambda$ that lies in the lower quadrilateral across the 1-handle via the disk $D^2_0\cup D^2_e$. Then $P$ is moved into $\Pi'$, and the arc $\omega$ is ``moved up''; see Figure~\ref{fig:1move} for two examples when $\tau$ is $1^-$. Note that if $\tau$ is $4^+$, $1^-$, or $3^-$, then the $\tau$-disk in each of the two blocks containing $\Pi$ becomes a $\mathrm{I}_+$-disk, $\mathrm{I}_+$-disk, or $\mathrm{I}$-disk, respectively. In the blocks containing $\Pi'$, the disks from $\Sigma$ that meet $D^2_e$ become new disks, and each of the new disks intersects $K$.

\begin{figure}[h]
\includegraphics{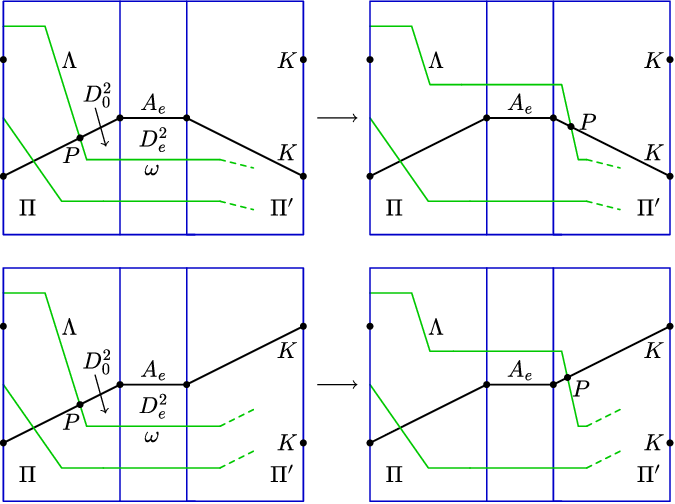}
\caption{Two examples of 1-moves when the type $\tau$ is $1^-$.}\label{fig:1move}
\end{figure}

Now consider a block containing $\Pi'$. Let $D^2_1$ be the disk from $\Sigma$ that meets $D^2_e$, and let $D^2_2$ be the new disk. So, by applying the 1-move, $D^2_1$ becomes $D^2_2$. Denote by $\Pi_0$ the other rectangle in $S_D$ that lies in the block. If $\partial D^2_1\cap\Pi_0$ meets $K$, then $\partial D^2_1$ has type $1^+$, $3^-$, or $4^-$, and $\partial D^2_2$ gives some $\Delta$ discussed in Case~1 of Step~5. Since the case does not happen, we have $\partial D^2_1\cap\Pi_0\cap K=\emptyset$. Similar to the case of the 0-move, we also have $\partial D^2_1\cap\Pi'\cap K=\emptyset$. So $\partial D^2_1\cap K=\emptyset$ and the type of $\partial D^2_1$ is one of $\mathrm{I}_{\pm}$, $\mathrm{II}_{\pm}$, $\mathrm{I}$, $\mathrm{II}$, and $\mathrm{III}$. Then we can get the type of $\partial D^2_2$ case-by-case.

Note that the two arcs from $K$ that lie in $\Pi$ and $\Pi'$ may be ``upper'' or ``lower''. So, according to whether the types of the two arcs are the same or different, there are two kinds of 1-moves, called the {\it 1s-moves} and {\it 1d-moves}, respectively.

First consider the 1s-moves. If $\tau$ is $4^+$, then $\partial D^2_1$ has type $\mathrm{I}_-$ or $\mathrm{I}$. So, $\partial D^2_2$ has type $1^+$ or $3^+$, respectively. If $\tau$ is $1^-$, then $\partial D^2_1$ has type $\mathrm{I}_-$ or $\mathrm{III}$. In the case of $\mathrm{I}_-$, $\partial D^2_2$ has type $4^-$. For the case of $\mathrm{III}$, $\partial D^2_2$ meets the larger quadrilateral in $\Pi'$ in two arcs, so $\mathcal{I}(\Sigma)$ can be reduced by Step~4. Hence by repeating previous steps, we can require that the case does not happen. Similarly, when $\tau$ is $3^-$, the type of $\partial D^2_1$ is $\mathrm{I}_-$ or $\mathrm{III}$, and we can require that the case of $\mathrm{III}$ does not happen. So $\partial D^2_2$ has type $4^-$. The discussion for the 1d-moves is similar. So we omit it. Hence, we see that $P$ becomes some $\tau'$-point, and we have the following.

\begin{proposition}\label{prop:1move}
A 1s-move converts a $\tau$-point into some $\tau'$-point, so that

(1) if $\tau$ is $4^+$, then $\tau'$ is $1^+$ or $3^+$;

(2) if $\tau$ is $1^-$, then $\tau'$ is $4^-$;

(3) if $\tau$ is $3^-$, then $\tau'$ is $4^-$.

In contrast, a 1d-move converts a $\tau$-point into some $\tau'$-point, so that

(1) if $\tau$ is $4^+$, then $\tau'$ is $4^-$;

(2) if $\tau$ is $1^-$, then $\tau'$ is $1^+$ or $3^+$;

(3) if $\tau$ is $3^-$, then $\tau'$ is $1^+$ or $3^+$.
\end{proposition}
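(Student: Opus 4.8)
The plan is to mirror the proof of Proposition~\ref{prop:0move}, separating the computation of $\tau'$ into its superscript (the over/under sign) and its subscript (the number $1$--$4$).

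I would first settle the sign. Since the $1$-move drags $P$ along $A_e$ through the $1$-handle, the point $P$ remains on the single strand of $K$ carried by the edge $e$. At the crossing $v$ this strand appears as an upper or a lower arc, and at the crossing $v'$ it again appears as an upper or a lower arc; by the definition of the two kinds of $1$-moves these agree exactly for a $1s$-move and disagree for a $1d$-move. Consequently a $1s$-move keeps $P$ on an arc of the same kind, so the superscript of $\tau'$ equals that of $\tau$, whereas a $1d$-move reverses it. This already produces the $+$ superscripts in the $1s$ case and the $-$ superscripts in the $1d$ case recorded in the statement.

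For the subscript I would argue as in the $0$-move. After the subarc of $\Lambda$ is pushed across the $1$-handle, in each of the two blocks meeting $\Pi'$ the disk of $\Sigma$ through the parallel copy of $A_e$ is replaced, and the boundary of the old disk does not meet $K$. By the paragraph preceding the proposition we may discard type $\mathrm{III}$, so the old boundary is one of the Case~3 types $\mathrm{I}$, $\mathrm{I}_+$, $\mathrm{I}_-$, $\mathrm{II}$, $\mathrm{II}_+$, $\mathrm{II}_-$. Adjoining the incoming arc to such a disk yields a disk meeting $K$ whose boundary pattern, read off in the rectangle $\Pi'$, is one of the types $1$, $3$, or $4$; matching each admissible old type to the resulting pattern gives the subscript. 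The outcome is that, for a $1s$-move, the starting type $4$ produces the ambiguity $\{1,3\}$ while the starting types $1$ and $3$ both yield $4$, and for a $1d$-move it is instead $1$ and $3$ that produce $\{1,3\}$ while $4$ yields $4$; together with the sign rule this yields all six assertions.

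The main obstacle is this final matching. One must determine, for each candidate old Case~3 type, which ones are geometrically compatible with the arc brought in by the band and what boundary pattern their union presents in $\Pi'$; in particular one must verify that exactly one starting type yields two compatible old disks (hence the pair $\{1,3\}$) while the others yield a unique old disk, and that the two kinds of $1$-move interchange which starting type is the ambiguous one. As in Proposition~\ref{prop:NdiskR} and in the exclusions of Steps~3 and~4, I would carry this out as a finite check, distinguishing the candidate disks by the isotopy classes of their boundary patterns in $(-1,1)\times\partial R_v$ and discarding any configuration that meets a quadrilateral twice, since Step~4 then lowers $\mathcal{I}(\Sigma)$.
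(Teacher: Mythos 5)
Your proposal is correct and takes essentially the same route as the paper: the paper likewise reduces everything to identifying the admissible types of the replaced disk whose boundary does not meet $K$ (it pins them down to $\mathrm{I}$ or $\mathrm{I}_-$, which is exactly what your compatibility check on the six Case~3 candidates would yield) and then verifies the six assertions by a finite case-by-case check. Your preliminary observation that a $1s$-move preserves the superscript while a $1d$-move reverses it is a clean way to organize that check, but it is not a genuinely different method.
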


\vspace{3pt}

\noindent {\bf Step 6.} We remove the $\tau$-points with the types $1^-$, $2^-$, $3^{\pm}$, and $4^{\pm}$.

By the above discussion, if a 0-move or a 1-move is applied and $\mathcal{I}(\Sigma)$ cannot be reduced, then $\Sigma$ still meets those 2-handles and 1-handles in disjoint normal disks, and it meets those blocks in disjoint $\tau$-disks, while $\mathcal{I}(\Sigma)$ stays the same. However, since in this process, either some point in $\Sigma\cap[-1,1]\times\{v\}$ is ``moved up'' or some arc in $\Sigma\cap[-1,1]\times A_e$ is ``moved up'', the move can only be applied finitely many times. For example, we can use the number of arcs in all $\Sigma\cap[-1,0]\times A_e$ and the number of points in all $\Sigma\cap[-1,1/2]\times\{v\}$ and $\Sigma\cap[-1,-1/2]\times\{v\}$ to bound the number of 1-moves and the number of 0-moves, respectively.

If there exist $3^+$-points or $2^-$-points, then some 0-move can be applied. If there exist $4^+$-points or $1^-$-points, then some 1-move can be applied. So when there are no 0-moves and 1-moves that can be applied, there are no $\tau$-points with types $1^-$, $2^-$, $3^+$, and $4^+$. Then, there are also no $3^-$-points and $4^-$-points. So, by applying the moves as much as possible, we only have $1^+$-points and $2^+$-points.

As a consequence, in each $[-1,1]\times R_v$ only the two rectangles that contain the upper arcs from $K$ can contain $\tau$-points. Below we show that actually only one of the two rectangles can contain $\tau$-points.

\begin{figure}[h]
\includegraphics{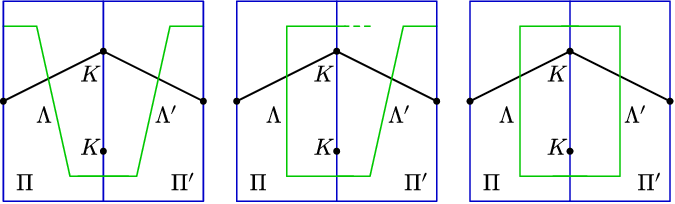}
\caption{Possible cases according to the types of $\Lambda$ and $\Lambda'$.}\label{fig:Tpoint}
\end{figure}

Assume that $\Pi$ and $\Pi'$ are the two rectangles in a $[-1,1]\times R_v$ that contain the upper arcs from $K$, and both of them contain $\tau$-points. Then, there are some arcs in $\Sigma\cap\Pi$ and $\Sigma\cap\Pi'$ passing through those $\tau$-points, respectively. Since $\tau$ is $1^+$ or $2^+$, all these arcs intersect $[-1,-1/2]\times\{v\}$. Let $\Lambda\subset\Pi$ (resp. $\Lambda'\subset\Pi'$) be the arc whose intersection with $[-1,-1/2]\times\{v\}$ is the uppermost, among those arcs in $\Pi$ (resp. $\Pi'$) that contain $\tau$-points. Then, in $[-1,-1/2]\times\{v\}$, the intersections of $\Lambda$ and $\Lambda'$ coincide, since an arc in $\Sigma\cap\Pi$ or $\Sigma\cap\Pi'$ that meets $[-1/2,1/2]\times\{v\}$ also meets $[-1,-1/2]\times\{v\}$. Then according to those possible types of $\Lambda$ and $\Lambda'$, there are essentially three cases, as shown in Figure~\ref{fig:Tpoint}.

Note that the arcs in $\Sigma\cap\Pi$ and $\Sigma\cap\Pi'$ which meet $[-1/2,1/2]\times\{v\}$ give some circles in $\Pi\cup\Pi'$ surrounding $(-1/2,v)$. So, if there exist such arcs, then $\mathcal{I}(\Sigma)$ can be reduced by applying the $D^2$-surgery. Then by repeating previous steps, we can require that there are no such arcs. So, in each case shown in Figure~\ref{fig:Tpoint}, there is a disk $D^2\subset\Pi\cup\Pi'$, such that $\partial D^2$ consists of an arc $A\subset K$ and an arc $\mu\subset\Lambda\cup\Lambda'$, $D^2\cap\Sigma=\mu$, and $(-1/2,v)\in D^2$. Then $K$ has a connected summand which has at least two components, $A\cup\mu$ and the one passing $(-1/2,v)$. It is impossible.

\vspace{5pt}

\noindent {\bf Summary.} By Step~6 we can further require that in each block the boundaries of $\tau$-disks have types $1^+$, $2^+$, $\mathrm{I}_{\pm}$, $\mathrm{II}_{\pm}$, $\mathrm{I}$, $\mathrm{II}$, and $\mathrm{III}$, and in each 0-handle at most one rectangle in $S_D$ can contain points in $\Sigma\cap K$.

\begin{remark}\label{rem:updown}
If we choose the direction ``down'' instead of ``up'', then we can get a similar definition of 0-moves and 1-moves, and by a similar process we can require that the only $\tau$-points are $1^-$-points and $2^-$-points.
\end{remark}


\subsection{Gluing of the $\tau$-disks}\label{subsec:ParaofD}
Now, for each $[-1,1]\times R_v$, we glue those $\tau$-disks in the four blocks together and simplify $\Sigma$ further in this process. Then $\Sigma$ also meets 0-handles in disjoint normal disks. This finishes the proof of Theorem~\ref{thm:normalForm}.

\vspace{5pt}

First we need to know the relations between the $\tau$-disks of the nine types. Also, we need certain parameters for unions of $\tau$-disks in a block, so that we can do the gluing easier. For simplicity, we denote $1^+$ and $2^+$ by $\tau_1$ and $\tau_2$, respectively. The following proposition can be checked case-by-case easily.

\begin{proposition}\label{prop:TypeR}
We regard $\mathrm{I}_+$, $\mathrm{I}_-$, $\mathrm{I}$, $\mathrm{II}_+$, $\mathrm{II}_-$, $\mathrm{II}$, $\mathrm{III}$, $\tau_1$, $\tau_2$ as $9$ vertices. If in a block, there are two disjoint $\tau$-disks having two different types, respectively, then we add an edge between the corresponding vertices. Then

(1) $\mathrm{I}_+$ is adjacent to any other vertex, and so are $\mathrm{I}_-$ and $\mathrm{II}_-$;

(2) the remaining edges are given in the graph in Figure~\ref{fig:graph3}.
\end{proposition}

\begin{figure}[h]
\includegraphics{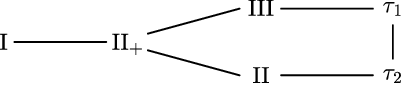}
\caption{Edges between the types other than $\mathrm{I}_+$, $\mathrm{I}_-$ and $\mathrm{II}_-$.}\label{fig:graph3}
\end{figure}

Let $B$ denote a block in $[-1,1]\times R_v$. Then $\Sigma\cap B$ is a union of disjoint $\tau$-disks with the nine types. We define seven parameters for $\Sigma\cap B$ as follows: we let $I_{v,+}$, $I_{v,0}$, and $I_{v,-}$ be those numbers of intersection points of $\Sigma\cap B$ with $[1/2,1]\times\{v\}$, $[-1/2,1/2]\times\{v\}$, and $[-1,-1/2]\times\{v\}$, respectively; we let $h_{1,+}$ and $h_{1,-}$ denote those numbers of intersection points of $\Sigma\cap B$ with the two edges of $B$ which meet the upper and the lower arcs from $K$, respectively, and the adjacent 1-handles; we let $h_2$ be the number of intersection points of $\Sigma\cap B$ with the edge of $B$ which lies in the adjacent 2-handle; also, we let $\kappa$ denote the number of points in $\Sigma\cap B\cap K$. In Table~\ref{tab:Para}, we list these parameters for each single $\tau$-disk.

Let $\mathfrak{T}$ be the vector space generated by the nine types over $\mathbb{R}$. Then $\Sigma\cap B$ can be uniquely represented by a vector $\Xi$ in $\mathfrak{T}$, up to isotopy of the $\tau$-disks. Let
\[\Xi=x_+\mathrm{I}_++x\mathrm{I}+x_-\mathrm{I}_-+ y_+\mathrm{II}_++y\mathrm{II}+y_-\mathrm{II}_-+ z\mathrm{III}+t_1\tau_1+t_2\tau_2,\]
where $x_+$, $x$, $x_-$, $y_+$, $y$, $y_-$, $z$, $t_1$, $t_2$ are nonnegative integers. The parameters are now linear functions of the coordinates if we identify $\mathfrak{T}$ with $\mathbb{R}^9$. From them there are six useful functions defined below, where $I_v=I_{v,+}+I_{v,0}+I_{v,-}$.
\begin{align*}
&a_+=\frac{I_v-h_{1,-}}{2},\, a_-=\frac{I_v-h_{1,+}}{2},\\
&h_+=\frac{h_{1,+}-h_2}{2},\, h_-=\frac{h_{1,-}-h_2}{2},\\
&\eta=I_{v,+}+I_{v,-}-\frac{h_{1,+}+h_{1,-}}{2},\\
&\sigma=\frac{I_{v,+}-I_{v,0}+I_{v,-}-h_2}{2}.
\end{align*}
In Table~\ref{tab:Dfunc}, we list the values of these derived functions at the generators.

\begin{table}[h]
\caption{Seven parameters for one $\tau$-disk.}\label{tab:Para}
\centerline{
\begin{tabular}{l|ccccccccc}
& $\mathrm{I}_+$ & $\mathrm{I}$ & $\mathrm{I}_-$ & $\mathrm{II}_+$ & $\mathrm{II}$ & $\mathrm{II}_-$ & $\mathrm{III}$ & $\tau_1$ & $\tau_2$\\
\hline
$I_{v,+}$ & $1$ & $0$ & $0$ & $1$ & $1$ & $0$ & $1$ & $0$ & $1$\\
$I_{v,0}$ & $0$ & $1$ & $0$ & $1$ & $0$ & $1$ & $1$ & $0$ & $0$\\
$I_{v,-}$ & $0$ & $0$ & $1$ & $0$ & $1$ & $1$ & $1$ & $1$ & $1$\\
$h_{1,+}$ & $1$ & $1$ & $1$ & $2$ & $2$ & $0$ & $1$ & $1$ & $0$\\
$h_{1,-}$ & $1$ & $1$ & $1$ & $0$ & $2$ & $2$ & $1$ & $1$ & $2$\\
$h_2$     & $1$ & $1$ & $1$ & $0$ & $0$ & $0$ & $1$ & $1$ & $0$\\
$\kappa$  & $0$ & $0$ & $0$ & $0$ & $0$ & $0$ & $0$ & $1$ & $1$
\end{tabular}}
\end{table}

\begin{table}[h]
\caption{Six derived linear functions on $\mathfrak{T}$.}\label{tab:Dfunc}
\centerline{
\begin{tabular}{l|ccccccccc}
& $\mathrm{I}_+$ & $\mathrm{I}$ & $\mathrm{I}_-$ & $\mathrm{II}_+$ & $\mathrm{II}$ & $\mathrm{II}_-$ & $\mathrm{III}$ & $\tau_1$ & $\tau_2$\\
\hline
$a_+$    & $0$ & $0$  & $0$ & $1$ & $0$ & $0$ & $1$ & $0$ & $0$\\
$a_-$    & $0$ & $0$  & $0$ & $0$ & $0$ & $1$ & $1$ & $0$ & $1$\\
$h_+$    & $0$ & $0$  & $0$ & $1$ & $1$ & $0$ & $0$ & $0$ & $0$\\
$h_-$    & $0$ & $0$  & $0$ & $0$ & $1$ & $1$ & $0$ & $0$ & $1$\\
$\eta$   & $0$ & $-1$ & $0$ & $0$ & $0$ & $0$ & $1$ & $0$ & $1$\\
$\sigma$ & $0$ & $-1$ & $0$ & $0$ & $1$ & $0$ & $0$ & $0$ & $1$
\end{tabular}}
\end{table}

\begin{proposition}\label{prop:PdetD}
Up to isotopy, the union of $\tau$-disks $\Sigma\cap B$ is determined by the seven parameters $I_{v,+}$, $I_{v,0}$, $I_{v,-}$, $h_{1,+}$, $h_{1,-}$, $h_2$, and $\kappa$.
\end{proposition}

\begin{proof}
Let $\Xi$ be the vector representing $\Sigma\cap B$ as above. Then, we need to find its coordinates. Note that $\kappa(\Xi)=t_1+t_2$, $\eta(\Xi)=-x+z+t_2$, $\sigma(\Xi)=-x+y+t_2$.

First, assume that $\kappa(\Xi)=0$. By Proposition~\ref{prop:TypeR}, we have three cases.

Case 1. $\eta(\Xi)<0$.

Then $x\neq 0$. So $y=z=0$ and $x=-\eta(\Xi)=-\sigma(\Xi)$. Then $y_+=a_+(\Xi)=h_+(\Xi)$ and $y_-=a_-(\Xi)=h_-(\Xi)$. Then by $I_{v,+}(\Xi)=x_++y_+$ and $I_{v,-}(\Xi)=x_-+y_-$ we can obtain $x_+$ and $x_-$. So $\Xi$ can be determined.

Case 2. $\eta(\Xi)>0$.

Then $z\neq 0$. So $x=y=0$ and $z=\eta(\Xi)$. Now $y_+=h_+(\Xi)$ and $y_-=h_-(\Xi)$. So by $I_{v,+}(\Xi)=x_++y_++z$ and $I_{v,-}(\Xi)=x_-+y_-+z$, we can obtain the numbers $x_+$ and $x_-$. Hence $\Xi$ can be determined.

Case 3. $\eta(\Xi)=0$.

Then $x=z$. So $x=z=0$ and $y=\sigma(\Xi)$. Now $y_+=a_+(\Xi)$ and $y_-=a_-(\Xi)$. So by $I_{v,+}(\Xi)=x_++y_++y$ and $I_{v,-}(\Xi)=x_-+y_-+y$, we can obtain the numbers $x_+$ and $x_-$. Hence $\Xi$ can be determined.

Then, assume that $\kappa(\Xi)>0$. So $t_1>0$ or $t_2>0$. Then, by Proposition~\ref{prop:TypeR}, we have $x=y_+=0$. So $y=h_+(\Xi)$ and $z=a_+(\Xi)$. There are also three cases.

Case 1. $a_+(\Xi)\neq 0$.

Then $y=t_2=0$ and $t_1=\kappa(\Xi)$. Now $y_-=h_-(\Xi)$. So by $I_{v,+}(\Xi)=x_++z$ and $I_{v,-}(\Xi)=x_-+y_-+z+t_1$, we can obtain $x_+$ and $x_-$, and determine $\Xi$.

Case 2. $h_+(\Xi)\neq 0$.

Then $z=t_1=0$ and $t_2=\kappa(\Xi)$. So, by $a_-(\Xi)=y_-+t_2$, $I_{v,+}(\Xi)=x_++y+t_2$, and $I_{v,-}(\Xi)=x_-+y_-+y+t_2$, we can get $y_-$, $x_+$, and $x_-$, and determine $\Xi$.

Case 3. $a_+(\Xi)=h_+(\Xi)=0$.

Then $\eta(\Xi)=\sigma(\Xi)=t_2$. Then by $a_-(\Xi)=h_-(\Xi)=y_-+t_2$, $I_{v,+}(\Xi)=x_++t_2$, and $I_{v,-}(\Xi)=x_-+y_-+t_1+t_2$, together with $\kappa(\Xi)=t_1+t_2$, we can get $y_-$, $x_+$, $x_-$, and $t_1$. So $\Xi$ can be determined.

Since $\Xi$ determines $\Sigma\cap B$ up to isotopy, we finish the proof.
\end{proof}

\begin{remark}\label{rem:PsMean}
The six cases in the proof correspond to those six edges of the graph in Figure~\ref{fig:graph3}. In each case, there exist at most five types of $\tau$-disks in a block, and the number of $\tau$-disks of a given type can be computed by the functions on $\mathfrak{T}$.

The functions $a_+$, $a_-$, $h_+$, and $h_-$ also have geometric meanings. Let $\Pi$ denote the rectangle that contains the upper (resp. lower) arc from $K$. Let $\Pi'$ denote the rectangle which meets $\Pi$ and the adjacent 2-handle. Then $a_-$ (resp. $a_+$) gives the number of arcs in $\Sigma\cap\Pi$ whose ends lie in the same edge, and $h_+$ (resp. $h_-$) gives the number of arcs in $\Sigma\cap\Pi'$ whose ends lie in the same edge.
\end{remark}

\vspace{3pt}

\noindent {\bf Step 7.} We glue the $\tau$-disks in 0-handles together and simplify $\Sigma$ further. By the step, we can finish the proof of Theorem~\ref{thm:normalForm}.

In each of the four blocks in $[-1,1]\times R_v$, the part of $\Sigma$ gives a vector in $\mathfrak{T}$. Let $\Xi_1$, $\Xi_2$, $\Xi_3$, and $\Xi_4$ be the corresponding four vectors, respectively, as indicated in the left picture of Figure~\ref{fig:Paste}. It is clear that the functions $I_{v,+}$, $I_{v,0}$, and $I_{v,-}$ have the same values at the four vectors, and for $a_+$ and $a_-$ we have $a_+(\Xi_1)=a_+(\Xi_2)$, $a_+(\Xi_3)=a_+(\Xi_4)$, $a_-(\Xi_1)=a_-(\Xi_3)$, and $a_-(\Xi_2)=a_-(\Xi_4)$.

\begin{figure}[h]
\includegraphics{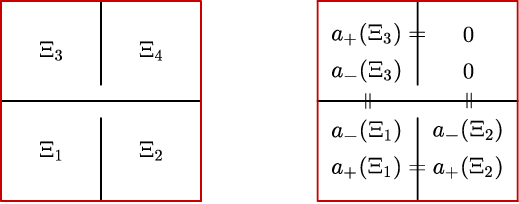}
\caption{The four vectors and their relations.}\label{fig:Paste}
\end{figure}

The four rectangles in $S_D$ form two squares which meet in $[-1,1]\times\{v\}$. Up to the reflections across these two squares, we can assume that $a_+(\Xi_1)\geq a_+(\Xi_3)$ and $a_-(\Xi_1)\geq a_-(\Xi_2)$. If $a_-(\Xi_2)>0$, then by the meaning of $a_-$ given in Remark~\ref{rem:PsMean}, there exist circles from $\Sigma$ in the square containing the upper arc from $K$. Since at most one rectangle in the square contains $\tau$-points, the circles do not meet $K$. By applying the $D^2$-surgery, $\mathcal{I}(\Sigma)$ can then be reduced. Similarly, if $a_+(\Xi_3)>0$, then $\mathcal{I}(\Sigma)$ can be reduced. So, by repeating previous steps, up to the reflections we can require that $a_-(\Xi_2)=a_+(\Xi_3)=0$. See the right picture of Figure~\ref{fig:Paste}.

Then $a_+(\Xi_4)=a_-(\Xi_4)=0$, and according to Table~\ref{tab:Dfunc}, we can assume that
\[\Xi_4=x_+\mathrm{I}_++x\mathrm{I}+x_-\mathrm{I}_-+y\mathrm{II}+t_1\tau_1.\]
If $t_1\neq 0$, then $\kappa(\Xi_2)=\kappa(\Xi_4)=t_1$ and $\kappa(\Xi_1)=\kappa(\Xi_3)=0$. By Proposition~\ref{prop:TypeR}, we have $x=y=0$. Then $I_{v,0}(\Xi_1)=I_{v,0}(\Xi_4)=0$. Then, according to Table~\ref{tab:Para}, we see that $\Xi_1$ is a linear combination of $\mathrm{I}_+$, $\mathrm{I}_-$, and $\mathrm{II}$. So $a_+(\Xi_1)=a_-(\Xi_1)=0$. So, up to the reflections, we can also require that $\kappa(\Xi_4)=0$.

In a block $B$, a $\mathrm{II}$-disk can be obtained from a $\mathrm{I}_+$-disk and a $\mathrm{I}_-$-disk by adding a band along the edge of $B$ that lies in the adjacent 2-handle. By replacing all the $\mathrm{II}$-disks in $[-1,1]\times R_v$ by pairs of $\mathrm{I}_+$-disks and $\mathrm{I}_-$-disks, we can get simpler cases. This may change the vectors $\Xi_1$, $\Xi_2$, $\Xi_3$, and $\Xi_4$, but their values of $a_+$ and $a_-$ do not change. Below we first deal with the simpler cases where there are no $\mathrm{II}$-disks, then we deal with the cases where there exist $\mathrm{II}$-disks.

\vspace{5pt}

\noindent {\bf The cases without $\mathrm{II}$-disks.} We determine all those possible coordinates of the vectors $\Xi_1$, $\Xi_2$, $\Xi_3$, and $\Xi_4$. This gives the four parts of $\Sigma$ in the four blocks up to isotopy. Then, we get all possible $\Sigma\cap[-1,1]\times R_v$, which are isotopic to unions of disjoint normal disks whose subclasses can be identified.

\vspace{5pt}

Because now $\Xi_4=x_+\mathrm{I}_++x\mathrm{I}+x_-\mathrm{I}_-$, we have $I_{v,+}(\Xi_i)=x_+$, $I_{v,0}(\Xi_i)=x$, and $I_{v,-}(\Xi_i)=x_-$ for $1\leq i\leq 4$. Since $a_-(\Xi_2)=\kappa(\Xi_2)=0$, $\Xi_2$ is a linear combination of $\mathrm{I}_+$, $\mathrm{I}$, $\mathrm{I}_-$, and $\mathrm{II}_+$. Let $y_+=a_+(\Xi_1)=a_+(\Xi_2)$. Then we have
\[\Xi_2=(x_+-y_+)\mathrm{I}_++(x-y_+)\mathrm{I}+x_-\mathrm{I}_-+y_+\mathrm{II}_+.\]
Because $a_+(\Xi_3)=0$, $\Xi_3$ is a linear combination of $\mathrm{I}_+$, $\mathrm{I}$, $\mathrm{I}_-$, $\mathrm{II}_-$, $\tau_1$, and $\tau_2$. Then according to the values of $\kappa(\Xi_1)$, $\eta(\Xi_1)$, and $a_+(\Xi_1)$, we have four cases.

\vspace{5pt}

\noindent {\bf Case 1.} $\kappa(\Xi_1)=0$ and $\eta(\Xi_1)\leq 0$.

Let $y_-=a_-(\Xi_1)=a_-(\Xi_3)$. Since $\kappa(\Xi_1)=\kappa(\Xi_3)=0$, we have
\[\Xi_3=x_+\mathrm{I}_++(x-y_-)\mathrm{I}+(x_--y_-)\mathrm{I}_-+y_-\mathrm{II}_-.\]
Note that $\eta=a_++a_--I_{v,0}$. By the proof of Proposition~\ref{prop:PdetD}, we have
\[\Xi_1=(x_+-y_+)\mathrm{I}_++(x-y_+-y_-)\mathrm{I}+(x_--y_-)\mathrm{I}_-+y_+\mathrm{II}_++y_-\mathrm{II}_-.\]
So, up to isotopy, $\Sigma\cap[-1,1]\times R_v$ consists of $x_+-y_+$ disks in $F_+$, $x_--y_-$ disks in $F_-$, $y_+$ disks in $C_+$, $y_-$ disks in $C_-$, and $x-y_+-y_-$ disks in $F_0$.

\vspace{5pt}

\noindent {\bf Case 2.} $\kappa(\Xi_1)=0$ and $\eta(\Xi_1)>0$.

We have $y_-$ and $\Xi_3$ as in Case 1. According to Table~\ref{tab:Dfunc}, now we also have
\[a_+(\Xi_1)=h_+(\Xi_1)+\eta(\Xi_1),\, a_-(\Xi_1)=h_-(\Xi_1)+\eta(\Xi_1).\]
Then, by the proof of Proposition~\ref{prop:PdetD} and $\eta=a_++a_--I_{v,0}$, we have
\[\Xi_1=(x_+-y_+)\mathrm{I}_++(x_--y_-)\mathrm{I}_-+(x-y_-)\mathrm{II}_++(x-y_+)\mathrm{II}_-+(y_++y_--x)\mathrm{III}.\]
So, up to isotopy, $\Sigma\cap[-1,1]\times R_v$ consists of $x_+-y_+$ disks in $F_+$, $x_--y_-$ disks in $F_-$, $y_++y_--x$ disks in $T_-$, $x-y_-$ disks in $C_+$, and $x-y_+$ disks in $C_-$.

\vspace{5pt}

\noindent {\bf Case 3.} $\kappa(\Xi_1)>0$ and $a_+(\Xi_1)=0$.

Now we have $\Xi_2=\Xi_4$. Since $\eta=a_++a_--I_{v,0}$, we have $\eta(\Xi_1)=\eta(\Xi_3)$. Then by the proof of Proposition~\ref{prop:PdetD} we have $\Xi_1=\Xi_3$. Let $y_-=a_-(\Xi_3)$ and $t=\kappa(\Xi_3)$. Then $\Xi_1=\Xi_3$ is given by the following combination
\[(x_+-y_-+x)\mathrm{I}_++(x_--x-t)\mathrm{I}_-+x\mathrm{II}_-+(t-y_-+x)\tau_1+(y_--x)\tau_2.\]
So, up to isotopy, $\Sigma\cap[-1,1]\times R_v$ is a union of $x$ disks in $C_-$, $x_--x-t$ disks in $F_-$, $x_+-y_-+x$ disks in $F_+$, $y_--x$ disks in $C_-^+$, and $t-y_-+x$ disks in $F_-^+$.

\vspace{5pt}

\noindent {\bf Case 4.} $\kappa(\Xi_1)>0$ and $a_+(\Xi_1)\neq 0$.

We have $y_-$, $t$, and $\Xi_3$ as in Case 3. Now by the proof of Proposition~\ref{prop:PdetD}, $\Xi_1$ is a linear combination of $\mathrm{I}_+$, $\mathrm{I}_-$, $\mathrm{II}_-$, $\mathrm{III}$, and $\tau_1$. Then there are no $\tau_2$-points. So
\[\Xi_3=x_+\mathrm{I}_++(x_--x-t)\mathrm{I}_-+x\mathrm{II}_-+t\tau_1.\]
According to Table~\ref{tab:Dfunc}, $a_-(\Xi_1)=h_-(\Xi_1)+a_+(\Xi_1)$. So $h_-(\Xi_1)=x-y_+$, and
\[\Xi_1=(x_+-y_+)\mathrm{I}_++(x_--x-t)\mathrm{I}_-+(x-y_+)\mathrm{II}_-+y_+\mathrm{III}+t\tau_1.\]
Hence, up to isotopy, $\Sigma\cap[-1,1]\times R_v$ consists of $x_+-y_+$ disks in $F_+$, $x_--x-t$ disks in $F_-$, $t$ disks in $F_-^+$, $y_+$ disks in $T_-$, and $x-y_+$ disks in $C_-$.

This finishes the discussion of the cases where there are no $\mathrm{II}$-disks.

\vspace{5pt}

\noindent {\bf The cases with $\mathrm{II}$-disks.} Starting from the normal disks described in Cases~1-4, we determine all the possible ways to add bands in the four blocks. This gives the remaining cases where there exist $\mathrm{II}$-disks, and the corresponding $\Sigma\cap[-1,1]\times R_v$ are unions of disjoint normal disks whose subclasses can be identified.

\vspace{5pt}

Note that when there exist $\mathrm{II}$-disks in a block $B$, both $h_+$ and $h_-$ have nonzero values. So, in $[-1,1]\times R_v$, if two blocks share a rectangle, then by the meaning of $h_{\pm}$ given in Remark~\ref{rem:PsMean}, only one block can contain $\mathrm{II}$-disks. Otherwise, $\Sigma$ cannot meet 1-handles in normal disks. According to Cases 1-4, we have Cases 5-8.

\vspace{5pt}

\noindent {\bf Case 5.} We add bands to the disks in $F_+$, $F_-$, $C_+$, $C_-$, and $F_0$ given in Case~1.

If there exist disks in $F_0$, then each block contains $\mathrm{I}$-disks. So, no bands can be added. If there are disks in $C_+$ or $C_-$, then the blocks that do not contain $\mathrm{I}$-disks share rectangles in $S_D$. So, we can only add bands in one block. Then in this case $\Sigma\cap[-1,1]\times R_v$ consists of normal disks in $F_+$, $F_-$, $C_+$, $C_-$, and $C_0$.

Now, assume that there are no disks in $F_0$, $C_+$, and $C_-$, and we can add bands in two blocks. Then the two blocks only meet in $[-1,1]\times\{v\}$. Then, after adding bands, the lowermost disk in $F_+$ and uppermost disk in $F_-$ become an annulus in $\Sigma$. So this annulus lies in some sphere $S_j\subseteq\Sigma$.

Consider the rectangle $\Pi$ in $[-1,1]\times R_v$ that contains $[-1,1]\times\{v\}$ and bisects each of the two blocks. Then, the annulus intersects $\Pi$ in a circle, which bounds a disk $D^2$ in $\Pi$; see the left picture of Figure~\ref{fig:AddTB}. Note that $D^2\cap S_j=\partial D^2$ and $\partial D^2$ splits $S_j$ into two disks. Then, since $D^2\cap K$ consists of two points, one of the two disks does not meet $K$. Together with $D^2$, it gives a sphere which meets $K$ in two points and splits $K$ into two arcs $E_1$ and $E_2$. Let $\mu=[-1/2,1/2]\times\{v\}$. Then we have two knots $\widehat{E}_i=E_i\cup\mu$ where $i=1,2$, and $K=\widehat{E}_1\#\widehat{E}_2$. Since the projection of $\widehat{E}_i$ to $S_0$ gives a diagram $Q_i$ of $\widehat{E}_i$ for $i=1,2$, from $Q_1$ and $Q_2$ we can obtain a new diagram of $K$ having fewer crossings than $D$. This is impossible.

So, as above, we can only add bands in one block, where the bands give normal disks in $C_0$. Also, note that up to the reflections across the two squares in $S_D$, we can assume that the block corresponding to $\Xi_1$ contains $\mathrm{II}$-disks.

\begin{figure}[h]
\includegraphics{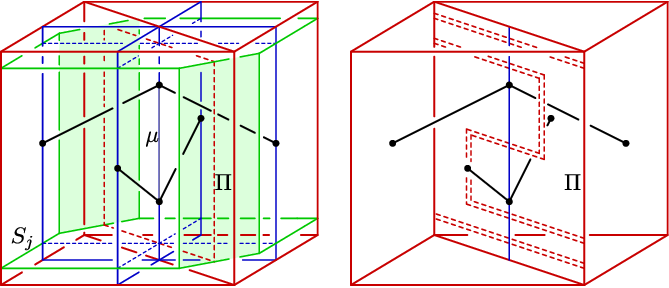}
\caption{Possible cases when we can add bands in two blocks.}\label{fig:AddTB}
\end{figure}

\vspace{5pt}

\noindent {\bf Case 6.} We add bands to the disks in $F_+$, $F_-$, $C_+$, $C_-$, and $T_-$ given in Case~2.

In Case~2, there exist disks in $T_-$. Then there are at most two blocks where we can add bands. The two blocks correspond to $\Xi_2$ and $\Xi_3$, respectively. If there are disks in $C_+$ and disks in $C_-$, then no bands can be added. Below we first consider the case where there are no disks in $C_+$ and $C_-$, then we consider the cases where there are no disks in exactly one of $C_+$ and $C_-$.

Case (a). There are no disks in $C_+$ and $C_-$.

Let $\Pi$ be the rectangle which bisects each of the two blocks, as in Case~5. Then those disks in $F_+$, $F_-$, and $T_-$ intersect $\Pi$ in some arcs which have certain shapes; see the right picture of Figure~\ref{fig:AddTB}. Now if we add bands between the disks, then in $\Pi$ we will have vertical arcs between the arcs coming from the disks. Assume that we have $t_+$, $t_-$, and $t_0$ disks in $F_+$, $F_-$, and $T_-$, respectively. In the above picture we add $b_-$ and $b_+$ bands along the left and the right edges of $\Pi$, respectively.

Since $t_0>0$, there exist integers $q_+,q_-\geq 0$ and $0\leq r_+,r_-<t_0$ such that
\[b_+=q_+t_0+r_+,\, b_-=q_-t_0+r_-.\]
Let $q=\min\{q_+, q_-\}$. Then we can add these bands in two steps. First we add $qt_0$ bands along each of the two edges. The intersections of the resulting disks with $\Pi$ have a pattern as shown in the left picture of Figure~\ref{fig:TwistA}, where the uppermost and lowermost arcs represent several horizontal arcs, and the middle part gives one arc, which represents $t_0$ parallel arcs. Then we add the remaining bands. According to whether $q_+=q_-$, $q_+>q_-$, or $q_+<q_-$, there are three cases.

Case (a1). $q_+=q_-$.

We also need to add $r_-$ bands along the left edge of $\Pi$, and $r_+$ bands along the right edge of $\Pi$. After adding the bands, we also have normal disks, as follows.

If $r_++r_-\leq t_0$, then we get $t_0-r_+-r_-$ disks in $T_-^{q,q}$, $r_+$ disks in $T_-^{q+1,q}$, and $r_-$ disks in $T_-^{q,q+1}$; and when $r_++r_->t_0$, we get $r_++r_--t_0$ disks in $T_-^{q+1,q+1}$, $t_0-r_-$ disks in $T_-^{q+1,q}$, and $t_0-r_+$ disks in $T_-^{q,q+1}$. Moreover, in each case, there are also $t_+-b_+$ disks in $F_+$ and $t_--b_-$ disks in $F_-$.

Case (a2). $q_+>q_-$.

We also need to add $r_-$ bands and $(q_+-q)t_0+r_+$ bands along the edges of $\Pi$, respectively. We first add $t_0$ bands along the right edge. This gives a new pattern of the intersections as shown in the right picture of Figure~\ref{fig:TwistA}, and we also need to add $r_+'=(q_+-q-1)t_0+r_+$ bands along the right edge of $\Pi$. After adding those remaining bands, we also have normal disks, as follows.

If $r_-\geq r_+'$, then we get $t_0-r_-$ disks in $T_-^{q+1,q}$, $r_--r_+'$ disks in $T_-^{q+1,q+1}$, and $r_+'$ disks in $T_-^{q+2,q+1}$; and if $r_-<r_+'$, then we get $t_0-r_-$ disks in $T_-^{q+1,q}$, $r_-$ disks in $T_-^{q+2,q+1}$, and $r_+'-r_-$ disks in $C_0$. Moreover, in each case we also have $t_+-b_+$ disks in $F_+$ and $t_--\max\{b_-,b_+-t_0\}$ disks in $F_-$.

Case (a3). $q_+<q_-$.

The discussion is similar to Case~(a2). In this case we have normal disks in $F_+$, $F_-$, $T_-^{q,q+1}$, $T_-^{q+1,q+1}$, and $T_-^{q+1,q+2}$, or, in $F_+$, $F_-$, $T_-^{q,q+1}$, $T_-^{q+1,q+2}$, and $C_0$.

This finishes the discussion of Case~(a).

\begin{figure}[h]
\includegraphics{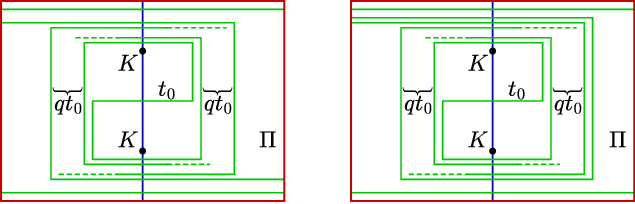}
\caption{Two intersection patterns of the disks with $\Pi$.}\label{fig:TwistA}
\end{figure}

Case (b). There are no disks in $C_-$, but there exist disks in $C_+$.

Now, the block corresponding to $\Xi_3$ cannot contain $\mathrm{II}$-disks. We can add bands only in the block corresponding to $\Xi_2$. Consider the intersection pattern of all the disks with the rectangle $\Pi$ as in Case~(a). As above, assume that we have $t_0$ disks in $T_-$, and we add $b_+$ bands along the right edge of $\Pi$. Then, as in Case~(a), after adding the bands, we also have normal disks. When $t_0\geq b_+$, we have disks in $F_+$, $F_-$, $C_+$, $T_-$, and $T_-^{1,0}$; otherwise, we have disks in $F_+$, $F_-$, $C_+$, $T_-^{1,0}$, and $C_0$.

Case (c). There are no disks in $C_+$, but there exist disks in $C_-$.

The discussion is similar to Case~(b). Now, we get normal disks in $F_+$, $F_-$, $C_-$, $T_-$, and $T_-^{0,1}$, or normal disks in $F_+$, $F_-$, $C_-$, $T_-^{0,1}$, and $C_0$.

\vspace{5pt}

\noindent {\bf Case 7.} We add bands to the disks in $F_+$, $F_-$, $C_-$, $C_-^+$, and $F_-^+$ given in Case~3.

In Case~3, there are disks in $C_-^+$ or $F_-^+$. By Proposition~\ref{prop:TypeR} and the meaning of $h_-$ given in Remark~\ref{rem:PsMean}, if there exist disks in $C_-^+$ and disks in $F_-^+$, then no bands can be added. So, there are two cases, as follows.

Case (a). There exist disks in $C_-^+$, and there are no disks in $F_-^+$.

We can add bands only in one of the two blocks that contain $\tau_2$-points, and we can only get new disks in $C_0$. Hence we get normal disks in $F_+$, $F_-$, $C_-$, $C_-^+$, and $C_0$. Note that, up to reflections across the two squares in $S_D$, we can assume that the block corresponding to $\Xi_1$ contains $\mathrm{II}$-disks.

Case (b). There exist disks in $F_-^+$, and there are no disks in $C_-^+$.

If there are disks in $C_-$, then no bands can be added. So, we assume that there are no disks in $C_-$. Now we can add bands only in one of the two blocks which do not contain $\tau_1$-points. Assume that we have $t$ disks in $F_-^+$ and add $b$ bands. Then we get normal disks in $F_+$, $F_-$, $F_-^+$, and $C_0^+$ if $t\geq b$; and we have normal disks in $F_+$, $F_-$, $C_0^+$, and $C_0$ if $t<b$. Also, up to reflections across the two squares in $S_D$, we can assume that the block corresponding to $\Xi_2$ contains $\mathrm{II}$-disks.

\vspace{5pt}

\noindent {\bf Case 8.} We add bands to the disks in $F_+$, $F_-$, $C_-$, $T_-$, and $F_-^+$ given in Case~4.

In Case 4, there are disks in $T_-$ and disks in $F_-^+$. If there are disks in $C_-$, then no bands can be added. So, we assume that there are no disks in $C_-$. Then, there is only one block where we can add bands. The block corresponds to $\Xi_2$.

Assume that there are $t_0$ disks in $T_-$ and $t$ disks in $F_-^+$, and we add $b$ bands. If $b<t_0$, then we have normal disks in $F_+$, $F_-$, $T_-$, $F_-^+$, and $T_-^{1,0}$; if $t_0\leq b<t_0+t$, then we have normal disks in $F_+$, $F_-$, $F_-^+$, $T_-^{1,0}$, and $C_0^+$; and when $b\geq t_0+t$, we have normal disks in $F_+$, $F_-$, $T_-^{1,0}$, $C_0^+$, and $C_0$.

This finishes the discussion of the cases where there exist $\mathrm{II}$-disks.

\vspace{5pt}

So, up to isotopy, we can also require that $\Sigma$ intersects the 0-handles in disjoint normal disks, and we finish the proof of Theorem~\ref{thm:normalForm}.

\begin{remark}\label{rem:NDtype}
In the proof, we see that there are at most five kinds of normal disks that can appear simultaneously in a 0-handle. In Table~\ref{tab:subclass} we summarize the cases, where $z$, $t_1$, and $t_2$ are the numbers of disks in $T_-$, $F_-^+$, and $C_-^+$, respectively.

\begin{table}[h]
\caption{The subclasses that can appear simultaneously.}\label{tab:subclass}
\centerline{
\begin{tabular}{l|l||l|l}
\hline
Case~1 & $F_+$, $F_-$, $C_+$, $C_-$, $F_0$ & Case~6 & $F_+$, $F_-$, $C_+$, $T_-$, $T_-^{1,0}$\\ \hline
Case~2 & $F_+$, $F_-$, $C_+$, $C_-$, $T_-$ ($z\neq 0$) & & $F_+$, $F_-$, $C_+$, $T_-^{1,0}$, $C_0$\\ \hline
Case~3 & $F_+$, $F_-$, $C_-$, $C_-^+$, $F_-^+$ ($t_1+t_2\neq 0$) & & $F_+$, $F_-$, $C_-$, $T_-$, $T_-^{0,1}$\\ \hline
Case~4 & $F_+$, $F_-$, $C_-$, $T_-$, $F_-^+$ ($zt_1\neq 0$) & & $F_+$, $F_-$, $C_-$, $T_-^{0,1}$, $C_0$\\ \hline
Case~5 & $F_+$, $F_-$, $C_+$, $C_-$, $C_0$ & Case~7 & $F_+$, $F_-$, $C_-$, $C_-^+$, $C_0$\\ \hline
Case~6 & $F_+$, $F_-$, $T_-^{q+1,q}$, $T_-^{q,q+1}$, $T_-^{q,q}$ & & $F_+$, $F_-$, $F_-^+$, $C_0^+$\\ \hline
       & $F_+$, $F_-$, $T_-^{q+1,q+1}$, $T_-^{q+1,q}$, $T_-^{q,q+1}$ & & $F_+$, $F_-$, $C_0^+$, $C_0$\\ \hline
       & $F_+$, $F_-$, $T_-^{q+2,q+1}$, $T_-^{q+1,q+1}$, $T_-^{q+1,q}$ & Case~8 & $F_+$, $F_-$, $T_-$, $F_-^+$, $T_-^{1,0}$\\ \hline
       & $F_+$, $F_-$, $T_-^{q+2,q+1}$, $T_-^{q+1,q}$, $C_0$ & & $F_+$, $F_-$, $F_-^+$, $T_-^{1,0}$, $C_0^+$\\ \hline
       & $F_+$, $F_-$, $T_-^{q+1,q+2}$, $T_-^{q+1,q+1}$, $T_-^{q,q+1}$ & & $F_+$, $F_-$, $T_-^{1,0}$, $C_0^+$, $C_0$\\ \hline
       & $F_+$, $F_-$, $T_-^{q+1,q+2}$, $T_-^{q,q+1}$, $C_0$ & & \\ \hline
\end{tabular}}
\end{table}

Note that Cases~1-4 correspond to four triangles in the graph in Figure~\ref{fig:graph1}, and Cases~5-8 can similarly provide more triangles if we also consider the normal disks with bands. Also, there are no subclasses $T_+$, $T_+^{r,s}$, $F_+^-$, $C_+^-$, and $C_0^-$ in the proof. For $T_+$ and $T_+^{r,s}$, this is because we have used the reflections across the squares in $S_D$. Disks in these subclasses may actually appear in 0-handles. For $F_+^-$, $C_+^-$, and $C_0^-$, this is because we have chosen the direction ``up'' in Section~\ref{subsec:Moves}. If we choose the direction ``down'', then we have $F_+^-$, $C_+^-$, and $C_0^-$ instead of $F_-^+$, $C_-^+$, and $C_0^+$, respectively. See also Remark~\ref{rem:updown}.
\end{remark}

By Remark~\ref{rem:updown}, we have similar 0-moves and 1-moves if we choose the direction ``down''. It is possible that $\mathcal{I}(\Sigma)$ can be further reduced after applying such moves, as in the paragraphs before Propositions~\ref{prop:0move} and \ref{prop:1move}. If the case does not happen, we can get parallel propositions, with the signs $+$ and $-$ interchanged.

\begin{definition}\label{def:simNF}
For the normal surface $\Sigma$ obtained in the proof of Theorem~\ref{thm:normalForm}, we can require that $\mathcal{I}(\Sigma)$ cannot be reduced by applying sequences of the possible 0-moves and 1-moves. We call such $\Sigma$ having the {\it simple normal form}.
\end{definition}


\section{Parallel pieces, nonparallel pieces, and $X$-pieces}\label{sec:FDH}
Let $K$, $D$, $\mathcal{H}_D$, and $\Sigma$ be as in previous sections. Here $\Sigma$ has the simple normal form. Then, by cutting $S^3$ along $\Sigma$, we have manifold pairs $(M_i,A_i)$, $1\leq i\leq n$, as described in Section~\ref{sec:MSS}. Because $\partial M_i$ meets the handles of $\mathcal{H}_D$ in normal disks, $M_i$ meets each handle of $\mathcal{H}_D$ in some 3-balls. So we have the following.

\begin{definition}\label{def:indHS}
For a given $(M_i,A_i)$ and a $j$-handle $H^j$ of $\mathcal{H}_D$, where $1\leq i\leq n$, $0\leq j\leq 3$, define any component of their intersection to be a $j$-handle in $(M_i,A_i)$. Then all such handles together give a handle structure for $(M_i,A_i)$. We call it the {\it induced handle structure} for $(M_i,A_i)$ and denote it by $\mathcal{H}_{D,i}$.
\end{definition}

Clearly, for a fixed $0\leq j\leq 3$, two $j$-handles of $\mathcal{H}_{D,i}$ are disjoint. Note that two handles of $\mathcal{H}_D$ meet in at most one disk and each such disk is divided into several disks by $\Sigma$. Then, any 1-handle of $\mathcal{H}_{D,i}$ meets 0-handles of $\mathcal{H}_{D,i}$ in two disks, and any 2-handle of $\mathcal{H}_{D,i}$ meets the union of all 1-handles and 0-handles of $\mathcal{H}_{D,i}$ in an annulus. Also, the following proposition can be checked easily.

\begin{proposition}\label{prop:goodhandle2}
If two handles of $\mathcal{H}_{D,i}$ meet, then they meet in one disk.
\end{proposition}

In this section, we first divide 1-handles and 0-handles of $\mathcal{H}_{D,i}$ in a certain way into smaller pieces, and we define the parallel pieces and nonparallel pieces. Then, we identify each component of the union of nonparallel pieces in a handle of $\mathcal{H}_{D,i}$, and we define the model of the component, and $X$-pieces. Finally, we simplify the models and modify the $X$-pieces and $(M_i,A_i)$ correspondingly.

\begin{figure}[h]
\includegraphics{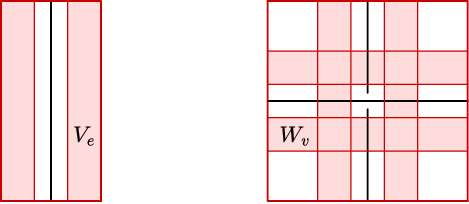}
\caption{Cut $R_e$ and $R_v$ along $G_e$ and $G_v$, respectively.}\label{fig:divide}
\end{figure}

To divide those handles of $\mathcal{H}_{D,i}$, we first cut the 1-handles and 0-handles of $\mathcal{H}_D$ into some blocks as follows. For a 1-handle $([-1,1]\times R_e,A_e)$, we identify $R_e$ with $[-1/2,1/2]\times[-1,1]$. Then, by cutting $[-1,1]\times R_e$ along $[-1,1]\times G_e$, where
\[G_e=\{-1/6,1/6\}\times[-1,1]\subset R_e,\]
we get $3$ blocks. In a similar way, for a 0-handle $([-1,1]\times R_v,A_v)$, we identify $R_v$ with $[-1,1]\times[-1,1]$. Then, by cutting $[-1,1]\times R_v$ along $[-1,1]\times G_v$, where \[G_v=(\{\pm1/2,\pm1/6\}\times[-1,1])\cup([-1,1]\times\{\pm1/2,\pm1/6\})\subset R_v,\]
we have $25$ blocks. In Figure~\ref{fig:divide}, we show the straight arcs in $G_e$ and $G_v$. We also view the pictures as the projections of $[-1,1]\times G_e$ and $[-1,1]\times G_v$.

Then, we let $J=(-1/2,-1/6)\cup(1/6,1/2)$, and we define the regions
\[V_e=J\times[-1,1]\subset R_e,\, W_v=(J\times[-1,1])\cup([-1,1]\times J)\subset R_v.\]
See the shaded regions in the pictures of Figure~\ref{fig:divide}. Recall that $\Sigma$ meets 1-handles and 0-handles in unions of normal disks, which have diagrams; see Definition~\ref{def:diagram}. Up to isotopy, we can assume that the parts of the diagrams which come from the vertical disks lie in $V_e$ and $W_v$, and they give certain patterns, as described below; see Figure~\ref{fig:case0} and Figures~\ref{fig:case1}-\ref{fig:case678} for some examples.

Now $G_e$ (resp. $G_v$) cuts $R_e$ (resp. $R_v$) into some rectangles. Let $U$ denote such a rectangle. It corresponds to a block $[-1,1]\times U$. Then, let $H$ denote a handle of $\mathcal{H}_{D,i}$, where $H$ and $[-1,1]\times U$ lie in the same handle of $\mathcal{H}_D$.

\begin{definition}\label{def:piece}
We call any component of $H\cap[-1,1]\times U$ a {\it piece}.

Let $B$ denote a piece, and let $\Sigma_0=\Sigma\cup\{\pm1\}\times S_0$. We call $B$ {\it parallel} if

(1) $B\cap K=\emptyset$ and $(B,B\cap\Sigma_0)$ is homeomorphic to $(D^2\times[0,1],D^2\times\{0,1\})$,

(2) the vertical part in $\Sigma\cap B$ projects to two parallel arcs in the rectangle $U$ or it projects to two (parallel) figures ``$\textsf{Y}$'' in the rectangle $U$.

Otherwise, we call $B$ {\it nonparallel}.
\end{definition}

Note that if $B$ does not meet those vertical disks in $\Sigma$, then condition~(2) holds automatically. By the definition, 1-handles and 0-handles of $\mathcal{H}_{D,i}$ are divided into many parallel and nonparallel pieces.

\vspace{8pt}

Below we identify all those possible components of the union of the nonparallel pieces in $H$. Each component will be a 3-ball, which has some peculiar shape. So, according to the shape, we can define a model of the component, and we call such a component with model $X$ an $X$-piece. There are nine cases: one for the possible $H$ lying in 1-handles of $\mathcal{H}_D$ and eight for the possible $H$ lying in 0-handles of $\mathcal{H}_D$. The eight cases correspond to those ones in Table~\ref{tab:subclass}.

\vspace{5pt}

\noindent {\bf Case 0.} The handle $H$ lies in some $([-1,1]\times R_e,A_e)$.

Up to the reflection across $[-1,1]\times A_e$, we can require that all possible vertical disks in $\Sigma\cap H$ project to parallel arcs in the rectangle $[1/6,1/2]\times[-1,1]\subset R_e$.

\begin{figure}[h]
\includegraphics{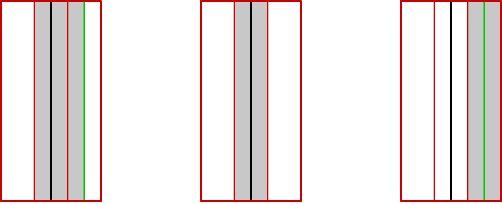}
\caption{The three models $Z_1$, $Z_2$, and $Z_3$ appearing in Case~0.}\label{fig:case0}
\end{figure}

If $H\cap\Sigma_0$ has only one component, then $A_e\subset H$, and there are two nonparallel pieces in $H$. In the left picture of Figure~\ref{fig:case0}, the projection of the two pieces gives the shaded region, where the green line indicates a vertical disk. The union of the two pieces is a 3-ball. We call such a 3-ball having model $Z_1$.

If $H\cap\Sigma_0$ has exactly two components, then such components are parallel disks in $[-1,1]\times R_e$. If the disks are curved, then by Definition~\ref{def:piece}, the pieces in $H$ are all parallel. In the other cases, $H$ contains nonparallel pieces only if $A_e\subset H$. The middle picture of Figure~\ref{fig:case0} gives the projection of the only nonparallel piece. The piece is a 3-ball. We call such a 3-ball having model $Z_2$.

Otherwise $H\cap\Sigma_0$ has exactly three components, and there are five pieces in $H$ where only one is nonparallel. The right picture of Figure~\ref{fig:case0} shows the projection of the piece. The piece is a 3-ball. We call such a 3-ball having model $Z_3$.

In Figure~\ref{fig:case0F}, we give some examples of the vertical sections of $[-1,1]\times R_e$ and the pieces, where the green arcs indicate the normal disks coming from $\Sigma$, and the shaded regions indicate the $Z_1$-piece, $Z_2$-piece, and $Z_3$-piece, respectively.

\begin{figure}[h]
\includegraphics{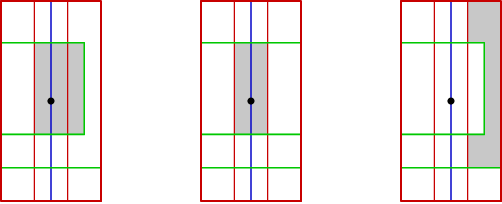}
\caption{The vertical sections of $[-1,1]\times R_e$ and the $X$-pieces.}\label{fig:case0F}
\end{figure}

Now we consider those cases when the handle $H$ lies in some $([-1,1]\times R_v,A_v)$. Let $\Psi=\Sigma\cap[-1,1]\times R_v$. It is a union of normal disks. According to the proof of Theorem~\ref{thm:normalForm}, up to the reflections across the two squares in $S_D\cap[-1,1]\times R_v$, we have eight cases for all possible $\Psi$: four without bands and four with bands. Then we also have eight cases below corresponding to those ones in Table~\ref{tab:subclass}.

Note that when $H\cap\Sigma_0$ consists of a pair of parallel normal disks, or it consists of some disks in $F_0$, $F_{\pm}$, or $\{\pm1\}\times R_v$, a piece $B$ in $H$ is nonparallel if and only if $B\cap K\neq\emptyset$. Then, by viewing $\{\pm1\}\times R_v$ as two disks in $F_{\pm}$, respectively, in most cases below, we only need to focus on those cases where disks in $H\cap\Sigma_0$ belong to different subclasses. Also, for simplicity, we only provide the projection pictures of the nonparallel pieces and $X$-pieces, as in Figure~\ref{fig:case0}.

\vspace{5pt}

\noindent {\bf The cases without bands.} Now, $\Psi$ is a union of some disks in $F_{\pm}$, $F_0$, $C_{\pm}$, $T_-$, $F_-^+$, and $C_-^+$. Assume that $\Psi$ contains exactly $x$, $y_+$, $y_-$, $z$, $t_1$, and $t_2$ disks in $F_0$, $C_+$, $C_-$, $T_-$, $F_-^+$, and $C_-^+$, respectively. According to Table~\ref{tab:subclass}, we have four cases.

\vspace{5pt}

\noindent {\bf Case 1.} $z=t_1=t_2=0$.

According to whether $x$, $y_+$, or $y_-$ is zero, there are five subcases.

Case 1(0). $x\neq 0$.

Let $D^2\subseteq\Psi$ be a disk in $F_0$. Then, it splits $[-1,1]\times R_v$ into two 3-balls, which are essentially the same as 1-handles of $\mathcal{H}_D$. The interiors of the 3-balls meet $\Psi$ in some flat and curved disks, where vertical disks project to straight arcs in $W_v$. So, as in Case~0, if $H$ contains nonparallel pieces, then the union of nonparallel pieces in $H$ is a 3-ball. According to whether $H\cap\Sigma_0$ has one, two, or three components, we also call such a 3-ball having model $Z_1$, $Z_2$, or $Z_3$, respectively.

Case 1(1). $x=0$, $y_+\neq 0$, and $y_-\neq 0$.

If $H\cap\Sigma_0$ has at most two components, then we meet some situation which has been discussed in Case~1(0). Otherwise, $H\cap\Sigma_0$ has exactly four components, and there are exactly one disk in $C_+$ and one disk in $C_-$. We can require that the two vertical disks in the two curved disks project to two straight arcs in $W_v$, as shown in the left picture of Figure~\ref{fig:case1}. Then one can check that there are $55$ pieces in $H$, where $9$ pieces are nonparallel. See the shaded region in the picture. The union of the $9$ pieces is a 3-ball. We call such a 3-ball having model $X_4$.

Case 1(2). $x=0$, $y_+=0$, and $y_-\neq 0$.

As in Case~1(1), we only need to consider the case when $H\cap\Sigma_0$ has more than two components. Then $H\cap\Sigma_0$ has exactly three components, and there is exactly one disk in $C_-$. Then we can require that the vertical disk projects to the straight arc in $W_v$ as shown in the middle picture of Figure~\ref{fig:case1}. Now there are $40$ pieces in $H$, and among them, $9$ pieces are nonparallel. See the picture. The union of the $9$ pieces is a 3-ball. We call such a 3-ball having model $X_3$.

Case 1(3). $x=0$, $y_+\neq 0$, and $y_-=0$.

By applying an isometry of $[-1,1]\times R_v$ which interchanges the two arcs of $A_v$, we see that this case is essentially the same as Case~1(2). When $H\cap\Sigma_0$ has three components, we can have a picture similar to the middle one in Figure~\ref{fig:case1}. All the nonparallel pieces in $H$ give a 3-ball. We also call this 3-ball having model $X_3$.

Case 1(4). $x=y_+=y_-=0$.

Now, $H\cap\Sigma_0$ has exactly two components, which are parallel in $[-1,1]\times R_v$. If $H$ contains nonparallel pieces, then $A_v\subset H$. The nonparallel pieces are the pieces that meet $A_v$. The right picture of Figure~\ref{fig:case1} gives their projections. The union of these pieces is a 3-ball. We call such a 3-ball having model $X_2$.

\begin{figure}[h]
\includegraphics{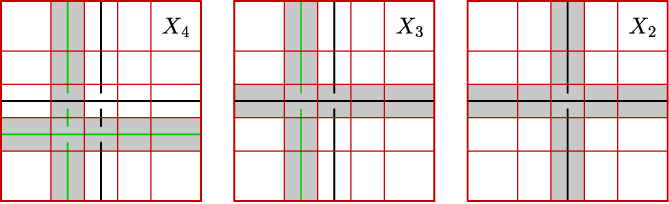}
\caption{The three models $X_4$, $X_3$, and $X_2$ appearing in Case~1.}\label{fig:case1}
\end{figure}

\noindent {\bf Case 2.} $x=t_1=t_2=0$ and $z\neq 0$.

Let $D^2\subseteq\Psi$ be a disk in $T_-$. Then, it splits $[-1,1]\times R_v$ into two 3-balls, which are essentially the same, up to Euclidean isometries of $[-1,1]\times R_v$. Hence we can require that $H$ lies in the 3-ball which contains the upper arc of $A_v$. For the other 3-ball, we have parallel results and the same models.

Then, according to whether $y_+$ is zero, there are two subcases.

Case 2(1). $y_+=0$.

We can assume that the diagram of each twisted disk in $\Psi$ lies in $W_v$ and gives a pattern as in the left picture of Figure~\ref{fig:case2}. If there are many such disks, then we have ``parallel'' figures ``$\textsf{Y}$'', whose branch points lie in the same rectangle. Denote this rectangle by $U$. If $H\cap K=\emptyset$, then either $H$ lies between two parallel normal disks, or it lies between $\{-1\}\times R_v$ and a disk in $F_-$. Then, by Definition~\ref{def:piece}, the pieces in $H$ are all parallel. Note that if $H$ lies between two twisted disks, then $H$ meets $[-1,1]\times U$ in a piece $B$ which is ``twisted'', but $B$ is parallel.

So we assume that $H\cap K\neq\emptyset$. Then the case is essentially the same as the one shown in Figure~\ref{fig:example2}. We have $37$ pieces in $H$, where $11$ pieces are nonparallel. Note that now $H$ meets $[-1,1]\times U$ in a piece $B$ which satisfies Definition~\ref{def:piece}(1), but it does not satisfy Definition~\ref{def:piece}(2). So $B$ is nonparallel. We also give the projection of the union of nonparallel pieces in the left picture of Figure~\ref{fig:case2}. This union gives a 3-ball. We call such a 3-ball having model $Y_2$.

Case 2(2). $y_+\neq 0$.

Now $\Psi$ contains disks in $C_+$, where the vertical disks project to straight arcs in $W_v$. If $H\cap\Sigma_0$ has exactly one component, then it is a disk in $C_+$. Then, we meet a situation which has been discussed in Case~1(0). When $H\cap\Sigma_0$ contains exactly two components, $H$ lies between two parallel disks. So all pieces in $H$ are parallel. Otherwise, $H\cap\Sigma_0$ has exactly three components, which contain a disk in $C_+$ and a disk in $T_-$. We can assume that the vertical parts in $H\cap\Sigma_0$ project to two arcs as shown in the right picture of Figure~\ref{fig:case2}. Then, we have $52$ pieces in $H$, where $5$ pieces are nonparallel, as shown in the picture. The union of these $5$ pieces gives a 3-ball. We call such a 3-ball having model $YZ_3$.

\begin{figure}[h]
\includegraphics{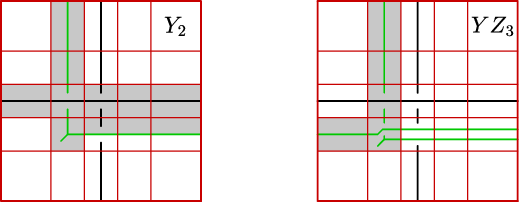}
\caption{The two models $Y_2$ and $YZ_3$ appearing in Case~2.}\label{fig:case2}
\end{figure}

\noindent {\bf Case 3.} $x=y_+=z=0$ and $t_1+t_2\neq 0$.

Now $\Psi\cap K\neq\emptyset$. We can require that the intersection points coming from disks in $F_-^+$ project into the rectangle which is not a square, and the vertical parts in $\Psi$ project to straight arcs in $W_v$; see the pictures below.

According to whether $y_-$, $t_1$, or $t_2$ is zero, there are three subcases.

Case 3(1). $y_-=0$, $t_1\neq 0$, and $t_2=0$.

Since all disks in $\Psi$ are flat, the nonparallel pieces are the ones that meet $K$. If $H$ contains nonparallel pieces, then we meet one of the three situations below.

(a) $H\cap K$ is a union of two arcs. There are $9$ nonparallel pieces, and the union of these pieces is a 3-ball. We call such a 3-ball having model $X_2'$.

(b) $H\cap K$ is one arc, and $H\cap\Sigma_0\cap K$ consists of two points. There is only one nonparallel piece, which is a 3-ball. We call such a 3-ball having model $O_2''$.

(c) $H\cap K$ is one arc, and $H\cap\Sigma_0\cap K$ is a point. There is only one nonparallel piece, which is a 3-ball. We call such a 3-ball having model $O_2'$.

The shaded regions in the left, middle, and right pictures of Figure~\ref{fig:case3a} show the projections of the $X_2'$-piece, $O_2''$-piece, and $O_2'$-piece, respectively.

Case 3(2). $y_-=0$, $t_1=0$, and $t_2\neq 0$.

As in Case~3(1), when $H$ contains nonparallel pieces, we have three situations.

(a) $H\cap\Sigma_0$ has three components. There are $6$ nonparallel pieces. The union of these pieces is a 3-ball. We call such a 3-ball having model $Y_3'$.

(b) $H\cap\Sigma_0$ has exactly two components. There is exactly one nonparallel piece, which is a 3-ball. We call such a 3-ball having model $\bar{O}_2''$.

(c) $H\cap\Sigma_0$ has only one component. There are $12$ nonparallel pieces, and these pieces give a 3-ball. We call such a 3-ball having model $Y_1'$.

See pictures of Figure~\ref{fig:case3b} for the $Y_3'$-piece, $\bar{O}_2''$-piece, and $Y_1'$-piece, respectively.

Case 3(3). At most one of $y_-$, $t_1$, and $t_2$ is zero.

By above cases, we only need to consider the following three situations.

(a) $y_-=0$ and $H\cap\Sigma_0$ has three components. We have a new model $Y_3''$.

(b) $t_2=0$ and $H\cap\Sigma_0$ has three components. We have a new model $X_3'$.

(c) $y_-\neq 0$, $t_2\neq 0$, and $H\cap\Sigma_0$ contains one disk in $C_-$ and one disk in $C_-^+$. In this situation, there are $4$ nonparallel pieces, and we have a new model $Z_2'$.

See pictures of Figure~\ref{fig:case3c} for the $Y_3''$-piece, $X_3'$-piece, and $Z_2'$-piece, respectively.

\begin{figure}[h]
\includegraphics{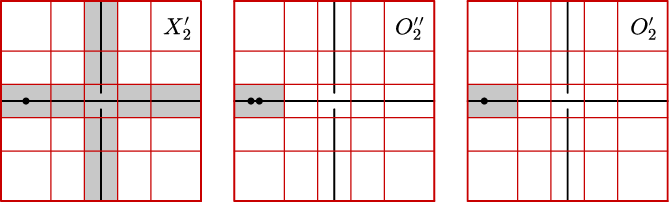}
\caption{The three models $X_2'$, $O_2''$, $O_2'$ appearing in Case~3(1).}\label{fig:case3a}
\end{figure}

\begin{figure}[h]
\includegraphics{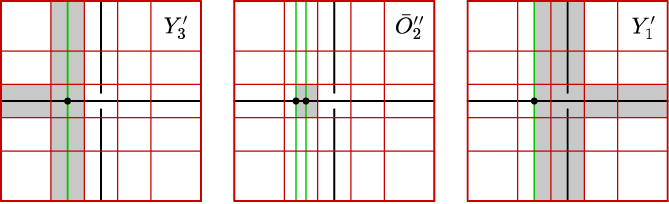}
\caption{The three models $Y_3'$, $\bar{O}_2''$, $Y_1'$ appearing in Case~3(2).}\label{fig:case3b}
\end{figure}

\begin{figure}[h]
\includegraphics{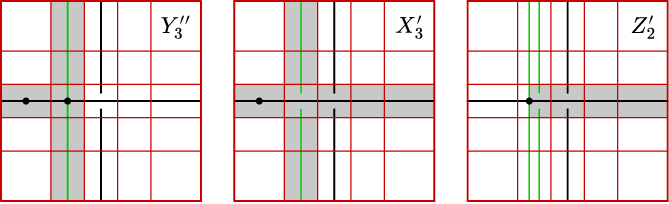}
\caption{The three models $Y_3''$, $X_3'$, $Z_2'$ appearing in Case~3(3).}\label{fig:case3c}
\end{figure}

\noindent {\bf Case 4.} $x=y_+=t_2=0$ and $zt_1\neq 0$.

As in Case~2, $\Psi$ contains a disk in $T_-$ which splits $[-1,1]\times R_v$ into two 3-balls. We can require that $H$ lies in the 3-ball which contains the upper arc of $A_v$. Then we only need to consider the case when $H\cap\Sigma_0$ consists of one disk in $T_-$ and one disk in $F_-^+$. We can require that $H\cap\Sigma_0$ has the diagram shown in the left picture of Figure~\ref{fig:case4}. Then there are $11$ nonparallel pieces, as shown by the shaded region in the picture. Their union is a 3-ball. We call such a 3-ball having model $Y_2'$.

This finishes the discussion of the cases where there are no bands.

\begin{figure}[h]
\includegraphics{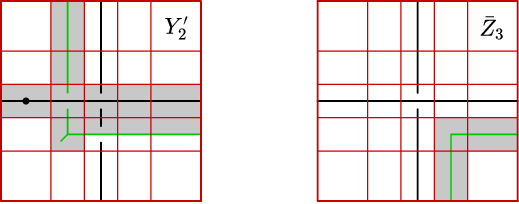}
\caption{The models $Y_2'$ in Case~4 and $\bar{Z}_3$ coming from bands.}\label{fig:case4}
\end{figure}

\noindent {\bf The cases with bands.} Now, $\Psi$ can be obtained from normal disks in Cases~1-4 by adding bands. The bands are vertical disks, which project to straight arcs near the corners of $R_v$. We can isotope $\Psi$ so that all vertical (resp. horizontal) parts of $\Psi$ are still vertical (resp. horizontal), and any band projects to a union of straight arcs in $W_v$. Up to isometries of $R_v$, we can assume that each band corresponds to an arc as shown in the right picture of Figure~\ref{fig:case4}. Then, the arc in $R_v$ meets three rectangles. Let $L$ be the union of the three rectangles. Let $U$ be a rectangle which is adjacent to $L$. If $U$ contains the corner of $R_v$, then the pieces in $[-1,1]\times U$ are all parallel. If $U\cap p(A_v)\neq\emptyset$, then $\Psi\cap[-1,1]\times U$ has not been changed when we add bands. So, we only need to focus on the nonparallel pieces in $[-1,1]\times L$.

\vspace{5pt}

We use $\bar{H}$ to denote a component of $H\cap[-1,1]\times L$. Recall that each band lies in a $\mathrm{II}$-disk. So we only need to consider the cases when $\bar{H}$ meets such $\mathrm{II}$-disks. If $\bar{H}$ lies between two parallel $\mathrm{II}$-disks, then by Definition~\ref{def:piece}, the pieces in $\bar{H}$ are all parallel. Otherwise, $\bar{H}$ can only meet one $\mathrm{II}$-disk. If $\bar{H}$ also meets the block which corresponds to the $U$ containing the corner, then $\bar{H}$ is a union of three nonparallel pieces. Consider a block $[-1,1]\times U$ meeting $\bar{H}$, where $U\cap p(A_v)\neq\emptyset$. The $\mathrm{II}$-disk meets exactly two components of $\Psi\cap[-1,1]\times U$, which must be horizontal disks, and do not meet $A_v$. Then, the pieces in $H\cap[-1,1]\times U$ which are adjacent to $\bar{H}$ must be parallel. So $\bar{H}$ is a component of the union of nonparallel
pieces in $H$. In the right picture of Figure~\ref{fig:case4}, the shaded region gives its projection. Note that $\bar{H}$ is a 3-ball which looks like a $Z_3$-piece. We call such a 3-ball having model $\bar{Z}_3$.

We also have the cases when $\bar{H}$ meets exactly one $\mathrm{II}$-disk and it only meets the $[-1,1]\times U$ with $U\cap p(A_v)\neq\emptyset$. Based on Cases 1-4, we have Cases 5-8.

\vspace{5pt}

\noindent {\bf Case 5.} $\Psi$ is obtained by adding bands to the normal disks in Case~1.

According to the proof of Theorem~\ref{thm:normalForm} and Table~\ref{tab:subclass}, $\Psi$ contains disks in $C_0$ and $x=0$. Then, according to whether $y_+$ or $y_-$ is zero, there are four subcases.

Case 5(1). $y_+\neq 0$ and $y_-\neq 0$.

Only one component of $[-1,1]\times R_v\setminus S_D$ can contain bands. Then the possible $\bar{H}$ lies in the handle $H$ where $H\cap\Sigma_0$ consists of a disk in $C_0$, a disk in $C_+$, and a disk in $C_-$. We can assume that the vertical parts in $H\cap\Sigma_0$ project to three arcs as shown in the left picture of Figure~\ref{fig:case5}. Then $\bar{H}$ is a union of three pieces. Only one of them is nonparallel. So there are $7$ nonparallel pieces in $H$, as shown in the picture. Their union gives a 3-ball. We call such a 3-ball having model $XZ_3$. One can compare it with $X_4$; see the left picture of Figure~\ref{fig:case1}.

Case 5(2). $y_+=0$ and $y_-\neq 0$.

There are exactly two components of $[-1,1]\times R_v\setminus S_D$ that can contain bands. Now $\bar{H}$ lies in the handle $H$ where $H\cap\Sigma_0$ is a union of a disk in $C_0$ and a disk in $C_-$. Up to reflections across the squares in $S_D\cap[-1,1]\times R_v$, we can assume that the two vertical disks in $H\cap\Sigma_0$ project to the arcs shown in the middle picture of Figure~\ref{fig:case5}. Then $\bar{H}$ consists of three pieces where two pieces are nonparallel. Note that the piece whose projection lies in the square satisfies Definition~\ref{def:piece}(1). But it does not satisfy Definition~\ref{def:piece}(2). So the piece in $\bar{H}$ is nonparallel. Then, we have $9$ nonparallel pieces in $H$. Their union gives a 3-ball. We call such a 3-ball having model $XY_2$. One can compare it with $X_3$; see the middle picture of Figure~\ref{fig:case1}.

Case 5(3). $y_+\neq 0$ and $y_-=0$.

By applying an isometry of $[-1,1]\times R_v$ which interchanges the two arcs of $A_v$, we see that this case is essentially the same as Case~5(2). We can obtain a picture similar to the middle one in Figure~\ref{fig:case5}. For the possible $\bar{H}\subset H$, those nonparallel pieces in $H$ give a 3-ball. We also call this 3-ball having model $XY_2$.

Case 5(4). $y_+=y_-=0$.

Each component of $[-1,1]\times R_v\setminus S_D$ can contain bands. The possible $\bar{H}$ lies in the handle $H$ where $H\cap\Sigma_0$ is one disk in $C_0$. Up to reflections across the squares in $S_D$ we can require that the vertical disk in $H\cap\Sigma_0$ projects to the arc shown in the right picture of Figure~\ref{fig:case5}. Then $\bar{H}$ consists of three nonparallel pieces, and $H$ contains exactly $12$ nonparallel pieces, as shown in the picture. The union of these pieces is a 3-ball. We call such a 3-ball having model $X_1$. One can also compare it with $X_2$; see the right picture of Figure~\ref{fig:case1}.

\begin{figure}[h]
\includegraphics{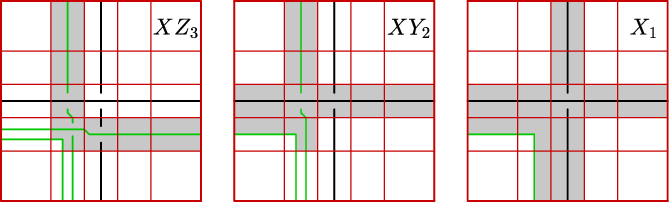}
\caption{The three models $XZ_3$, $XY_2$, $X_1$ appearing in Case~5.}\label{fig:case5}
\end{figure}

\noindent {\bf Case 6.} $\Psi$ is obtained by adding bands to the normal disks in Case~2.

Assume that $\Psi'$ is a union of some normal disks in $F_+$, $F_-$, $C_+$, $C_-$, and $T_-$ as in Case~2, and $\Psi$ can be obtained from $\Psi'$ by adding some bands. By the proof of Theorem~\ref{thm:normalForm}, there are exactly two components of $[-1,1]\times R_v\setminus S_D$ where we can add bands. In such a component we can add bands inductively, and the first band lies in an innermost $\mathrm{II}$-disk. Then the possible $\bar{H}$ meets such a $\mathrm{II}$-disk.

The twisted disks in $\Psi'$ cut $[-1,1]\times R_v$ into some 3-balls. There is exactly one 3-ball containing the upper (resp. lower) arc of $A_v$. Those 3-balls disjoint from $A_v$ lie between parallel disks in $T_-$. Since the first band lies between a disk in $T_-$ and a disk in $F_{\pm}$, $\bar{H}$ lies in one of the two 3-balls which meet $A_v$. We can assume that $\bar{H}$ lies in the 3-ball containing the upper arc of $A_v$. For the other 3-ball, there are parallel results and the same models, as in Case~2.

Now in the 3-ball containing the upper arc of $A_v$, $\bar{H}$ meets a $\mathrm{II}$-disk lying in $\Psi$, so $y_+=0$. Then, by cutting $[-1,1]\times R_v$ along $\Psi'$, we have a 3-ball $H'$ containing the upper arc of $A_v$ so that $H'\cap\Sigma_0$ consists of a disk in $T_-$ and a disk in $F_-$. So, by Case~2(1), the nonparallel pieces in $H'$ give a $Y_2$-piece. Then, when we add the first band, we can get a modified $Y_2$-piece as given in the left picture of Figure~\ref{fig:case678}. Note that $\bar{H}$ consists of three pieces, where two pieces are nonparallel. Two pieces in the original $Y_2$-piece have been modified, where one piece becomes parallel. For any rectangle $U$ with $U\cap p(A_v)\neq\emptyset$, the pieces in $[-1,1]\times U$ do not change when we add bands. So the modified $Y_2$-piece also does not change when we add bands. Then it is a component of the union of nonparallel pieces in $H$. The component is a 3-ball. We call such a 3-ball having model $YZ_1$.

\begin{figure}[h]
\includegraphics{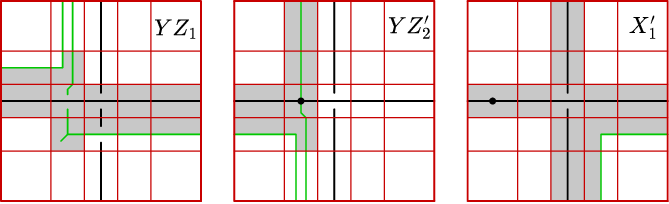}
\caption{The models $YZ_1$, $YZ_2'$, $X_1'$ appearing in Cases~6 and 7.}\label{fig:case678}
\end{figure}

\noindent {\bf Case 7.} $\Psi$ is obtained by adding bands to the normal disks in Case~3.

As above, we assume that $\Psi'$ is a union of some normal disks as in Case~3, and $\Psi$ can be obtained from $\Psi'$ by adding some bands. We assume that $\Psi'$ contains $t_1$ disks in $F_-^+$ and $t_2$ disks in $C_-^+$. Then, according to the proof of Theorem~\ref{thm:normalForm}, one can add bands only if $t_1=0$ or $t_2=0$. So there are two subcases.

Case 7(1). $t_1=0$ and $t_2\neq 0$.

There are exactly two components of $[-1,1]\times R_v\setminus S_D$ that can contain bands. Now $\bar{H}$ lies in the handle $H$ where $H\cap\Sigma_0$ is a union of a disk in $C_0$ and a disk in $C_-^+$. Up to reflections across the squares in $S_D\cap[-1,1]\times R_v$, we can assume that the two vertical disks in $H\cap\Sigma_0$ project to the arcs shown in the middle picture of Figure~\ref{fig:case678}. Then there are exactly $6$ nonparallel pieces in $H$, and their union gives a 3-ball. We call such a 3-ball having model $YZ_2'$. One can compare it with $Y_3'$.

Case 7(2). $t_1\neq 0$ and $t_2=0$.

According to the proof of Theorem~\ref{thm:normalForm} and Table~\ref{tab:subclass}, we have $y_-=0$. There are exactly two components of $[-1,1]\times R_v\setminus S_D$ that can contain bands. The possible $\bar{H}$ lies in the handle $H$ where $H\cap\Sigma_0$ is a disk in $C_0^+$. Up to reflections across the squares in $S_D$, we can require that the vertical disk in $H\cap\Sigma_0$ gives the arc shown in the right picture of Figure~\ref{fig:case678}. Then $H$ has exactly $12$ nonparallel pieces, which give a 3-ball. We call such a 3-ball having model $X_1'$. See also $X_2'$.

\vspace{5pt}

\noindent {\bf Case 8.} $\Psi$ is obtained by adding bands to the normal disks in Case~4.

As above, we assume that $\Psi'$ is a union of some normal disks as in Case~4, and $\Psi$ can be obtained from $\Psi'$ by adding some bands. Then according to the proof of Theorem~\ref{thm:normalForm}, $y_-=0$, and exactly one component of $[-1,1]\times R_v\setminus S_D$ can contain bands. The possible $\bar{H}$ lies in the handle $H$ where $H\cap\Sigma_0$ is a disk in $T_-^{1,0}$. So we meet a situation which has been discussed in Case~6. We can get a picture similar to the left one in Figure~\ref{fig:case678}. The nonparallel pieces in $H$ give a $YZ_1$-piece.

This finishes the discussion of the cases where there exist bands.

\vspace{5pt}

So we have found all possible components of the union of the nonparallel pieces in $H$. The components are 3-balls, which have the following models:
\begin{align*}
X_4,\, &X_3, X_2, X_1, XY_2, XZ_3,\quad Y_2, YZ_3, YZ_1,\quad Z_3, Z_2, Z_1, \bar{Z}_3,\\
&X_3', X_2', X_1',\quad Y_3', Y_3'', Y_2', Y_1', YZ_2',\quad Z_2', O_2', O_2'', \bar{O}_2''.
\end{align*}

\begin{remark}\label{rem:sym}
Note that in the proof of Theorem~\ref{thm:normalForm} and the above discussions, we have applied the reflections across the squares in $S_D\cap[-1,1]\times R_v$. So for each of the pictures in Figures~\ref{fig:case1}-\ref{fig:case678}, by applying reflections across the two arcs in $p(A_v)$, there are similar pictures. Similarly, in Cases~1(3), 2, 5(3), and 6, we have applied isometries of $[-1,1]\times R_v$ which interchange the two arcs of $A_v$. So, in these cases there are also similar pictures corresponding to the isometries. So, for each model, we usually have several pictures which differ by certain symmetries.
\end{remark}

\vspace{3pt}

Below we simplify the models further and modify $(M_i,A_i)$ correspondingly. We locally push $\partial M_i$ ``up'' or ``down'' so that the points in $\partial M_i\cap K$ move along $K$ as in Section~\ref{subsec:Moves}. Then $(M_i,A_i)$ becomes some $(M_i',A_i')$ with $M_i'\subset M_i$ while certain nonparallel pieces in $M_i$ become parallel in $M_i'$, and the $X$-pieces become simpler. Now, we use $B_X$ to denote an $X$-piece in $M_i$. Note that $B_X\cap\Sigma_0$ consists of some horizontal and vertical disks. The modification has three steps.

\vspace{5pt}

\noindent {\bf Step 1.} We simplify $B_X$ when $X$ is $X_1'$, $X_2'$, or $X_3'$.

The bottom horizontal disk in $B_X\cap\Sigma_0$ meets $K$ in a point. Near this point we can isotope $\partial M_i$ so that the disk moves ``up'' and the point moves along $K$. Then some nonparallel pieces in $B_X$ become parallel. Corresponding to $X_1'$, $X_2'$, and $X_3'$, we can then get the new models $X_1'Y$, $X_2'Y$, and $X_3'Y$ shown in the three pictures of Figure~\ref{fig:step1}, respectively. As before, those shaded regions indicate the projections of the nonparallel pieces. The new $X$-pieces are also 3-balls.

\begin{figure}[h]
\includegraphics{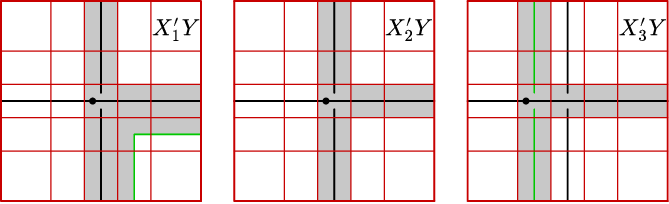}
\caption{The models $X_1'Y$, $X_2'Y$, $X_3'Y$ coming from $X_1'$, $X_2'$, $X_3'$.}\label{fig:step1}
\end{figure}

\noindent {\bf Step 2.} We simplify $B_X$ when $X$ is $Y_1'$, $Y_2'$, or $Y_3''$.

As in Step~1, we can modify $Y_2'$-pieces and $Y_3''$-pieces. Corresponding to $Y_2'$ and $Y_3''$, we can obtain the new models $Y_2'Z$ and $Y_3''Z$ shown in the left two pictures of Figure~\ref{fig:step2}, respectively. Their corresponding new $X$-pieces are also 3-balls. If $X$ is $Y_1'$, then we can modify $B_X$ as follows. Now the vertical disk in $B_X\cap\Sigma_0$ meets $K$ in a point. We first isotope $\partial M_i$ so that the top horizontal disk in $B_X\cap\Sigma_0$ moves ``down'' and contains the point, then we push this disk ``down'' further so that the point moves along $K$. Then, a nonparallel piece in $B_X$ becomes parallel, while $B_X$ splits into a $Z_1$-piece and an $O_2'$-piece; see the right picture of Figure~\ref{fig:step2}.

\begin{figure}[h]
\includegraphics{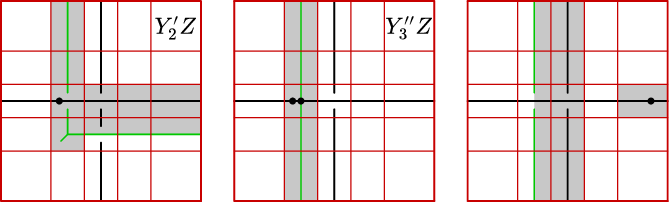}
\caption{The two models $Y_2'Z$, $Y_3''Z$ and the decomposition of $Y_1'$.}\label{fig:step2}
\end{figure}

\noindent {\bf Step 3.} We simplify $B_X$ when $X$ is $Z_2'$ or $O_2'$.

Similar to the case of $Y_1'$ in Step~2, we can modify any $Z_2'$-piece $B_X$ by pushing ``down'' the top horizontal disk in $B_X\cap\Sigma_0$, so that the point $B_X\cap\Sigma_0\cap K$ moves along $K$. Then three nonparallel pieces in $B_X$ become parallel, while $B_X$ becomes an $O_2'$-piece. Note that the ``pushing'' corresponds to a 0-move.

Now consider an $O_2'$-piece $B_X$. The top horizontal disk in $B_X\cap\Sigma_0$ meets $K$ in a point. Then corresponding to a 1-move with the direction ``down'', we can move the point across the adjacent 1-handle into another 0-handle. In the 1-handle, the $X$-piece in $M_i$ that meets the $O_2'$-piece has model $Z_2$. Then in the other 0-handle, the $X$-piece in $M_i$ that meets the $Z_2$-piece must have a certain model. The model cannot be $O_2'$. Otherwise, we will get a trivial connected summand of $K$, as in the discussion in Section~\ref{subsec:Moves}. If the $X$-piece has model $Z_2$, then it meets a $Z_2$-piece in another 1-handle. We can assume that such a sequence of $Z_2$-pieces stops at some $X$-piece in a 0-handle, and the $X$-piece has a model other than $O_2'$ and $Z_2$. Then, by Step~1 and Step~2, the model of this $X$-piece must be one of the following:
\[X_1, X_2, X_3,\quad XY_2, Y_2, Y_3',\quad X_1'Y, X_2'Y, X_3'Y.\]
According to the 0-moves and 1-moves, we can move $B_X\cap\Sigma_0\cap K$ along $K$. Then all the pieces in the $O_2'$-piece and the $Z_2$-pieces become parallel. The model of the final $X$-piece will change. According to the above list, we have three cases.

Case (a). The final $X$-piece has model $X_1$, $X_2$, or $X_3$.

We first assume that it has model $X_1$ as given in the right picture of Figure~\ref{fig:case5}. The point in $\Sigma_0\cap K$ may come from the top side of $R_v$ or the right side of $R_v$. In the former case, the $X_1$-piece will become the $X_1'Y$-piece shown in the left picture of Figure~\ref{fig:step3Ca}. It differs from the one in Step~1 by isometries which interchange the two arcs of $A_v$, as in Remark~\ref{rem:sym}. The latter case cannot happen. Otherwise $\mathcal{I}(\Sigma)$ can be reduced by the moves as in the discussion in Section~\ref{subsec:Moves}, which contradicts the assumption that $\Sigma$ has the simple normal form; see Definition~\ref{def:simNF}.

Then we assume that the $X$-piece has model $X_2$ as given in the right picture of Figure~\ref{fig:case1}. As above, we have essentially two situations where the point in $\Sigma_0\cap K$ comes from the top side or the right side of $R_v$. For the former case, the $X_2$-piece will become the $X_2'Y$-piece shown in the middle picture of Figure~\ref{fig:step3Ca}, which differs from the one in Step~1 by an isometry interchanging the two arcs of $A_v$, as above. For the latter case, the $X_2$-piece will split into a $Z_2$-piece and an $O_2'$-piece. So, we can repeat the previous processes for this new $O_2'$-piece.

Then, we assume that the $X$-piece has model $X_3$ as given in the middle picture of Figure~\ref{fig:case1}. The point in $\Sigma_0\cap K$ may come from the left side or the right side of $R_v$. In the former case, the $X$-piece will split into a $Z_3$-piece and an $O_2'$-piece. For the latter case, we can get a $Z_3$-piece and an $O_2'$-piece which are adjacent. In each case, we can move the point further and repeat the previous processes.

Finally, note that in Case~1(3) we have another picture for $X_3$. So we also have two cases for this picture. In such cases, the $X$-piece will become an $X_3'Y$-piece or a $Y_3'$-piece. As above, the $X_3'Y$-piece differs from the one in Step~1 by an isometry interchanging the two arcs of $A_v$. See the right picture of Figure~\ref{fig:step3Ca}.

Now, up to symmetry, we have listed all the possibilities in Case~(a).

\begin{figure}[h]
\includegraphics{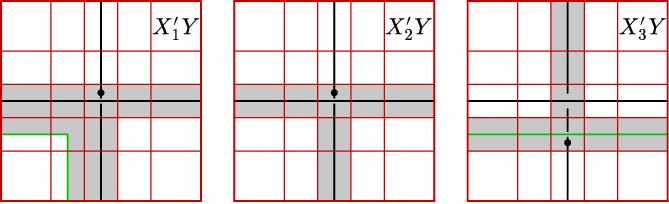}
\caption{The other pictures for the models $X_1'Y$, $X_2'Y$, and $X_3'Y$.}\label{fig:step3Ca}
\end{figure}

Case (b). The final $X$-piece has model $XY_2$, $Y_2$, or $Y_3'$.

First, we assume that the $X$-piece has model $XY_2$. Then, similar to the case of $X_1$ in Case (a), since $\Sigma$ has the simple normal form, this $XY_2$-piece cannot be the one shown in the middle picture of Figure~\ref{fig:case5}. Then, it suffices to consider the one given in Case~5(3). This one will become a $YZ_2'$-piece.

When the $X$-piece has model $Y_2$, the argument is similar. It suffices to consider the $Y_2$-piece which contains the lower arc of $A_v$. Then, the $Y_2$-piece will become a $Y_2'Z$-piece, which differs from the one obtained in Step~2 by an isometry. The case of $Y_3'$ is similar. The $Y_3'$-piece shown in the left picture of Figure~\ref{fig:case3b} will become a $Y_3''Z$-piece, which differs from the one obtained in Step~2 by an isometry.

Now, up to symmetry, we have listed all the possibilities in Case~(b).

Case (c). The final $X$-piece has model $X_1'Y$, $X_2'Y$, or $X_3'Y$.

Note that we need to consider two pictures for each of the three models. One is given in Figure~\ref{fig:step1}. The other is given in Figure~\ref{fig:step3Ca}. For $X_1'Y$, the latter picture is impossible. Otherwise we will obtain a contradiction as in Case~(a). For $X_2'Y$, the latter picture is also impossible. If the point in $\Sigma_0\cap K$ comes from the left side or the right side of $R_v$, then as in Step~5 in Section~\ref{sec:SF}, the diagram $D$ is not minimal. If this point comes from the bottom side of $R_v$, then as in Step~6 in Section~\ref{sec:SF}, the knot $K$ is not connected. Below we consider the remaining four cases.

We first assume that the $X$-piece has model $X_1'Y$ as given in the left picture of Figure~\ref{fig:step1}. The point in $\Sigma_0\cap K$ comes from the top side of $R_v$. Then we can have a new model $X_1''Z$ as given in the left picture of Figure~\ref{fig:step3}. This $X_1''Z$-piece meets $\Sigma$ in a disk, which contains two points in $K$. Let $S_j\subseteq\Sigma$ be the sphere that contains the disk. Then $S_j$ splits $K$ into two arcs $E_1$ and $E_2$. Let $\mu\subset S_j$ be the arc shown in the picture. It consists of one vertical arc and two horizontal arcs. Then we get two knots $\widehat{E}_i=E_i\cup\mu$, $i=1,2$, and $K=\widehat{E}_1\#\widehat{E}_2$. By projecting $\widehat{E}_i$ to $S_0$ we have a diagram $Q_i$ of $\widehat{E}_i$. Now let $c(Q_i)$ denote the number of crossings in $Q_i$. We note that outside $R_v$ there are at least two points in $Q_1\cap Q_2$. So, we have
\[c(K)=c(D)\geq c(Q_1)+c(Q_2)\geq c(\widehat{E}_1)+c(\widehat{E}_2)\geq c(\widehat{E}_1\#\widehat{E}_2).\]
Then, $c(K)=c(\widehat{E}_1)+c(\widehat{E}_2)$, $c(\widehat{E}_i)=c(Q_i)$ for $i=1,2$, and outside $R_v$ there exist exactly two points in $Q_1\cap Q_2$. Moreover, one of $\widehat{E}_1$ and $\widehat{E}_2$ contains a trefoil knot as a connected summand; see the picture. Then, by replacing $K$ with $\widehat{E}_i$, which is nontrivial, we can assume that there are no $X_1''Z$-pieces.

\begin{figure}[h]
\includegraphics{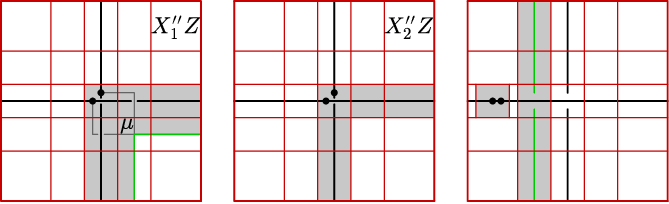}
\caption{The models $X_1''Z$, $X_2''Z$ and the decomposition of $X_3'Y$.}\label{fig:step3}
\end{figure}

When the $X$-piece has model $X_2'Y$ as shown in the middle picture of Figure~\ref{fig:step1}, there are essentially two situations where the point in $\Sigma_0\cap K$ comes from the top side or the right side of $R_v$. In the former case, we can have a new model $X_2''Z$ as given in the middle picture of Figure~\ref{fig:step3}. In the latter case, by reversing the move in Step~1, we can first turn the $X_2'Y$-piece into the original $X_2'$-piece. Then we see that this $X_2'$-piece splits into a $Z_2$-piece and an $O_2''$-piece.

Now we assume that the $X$-piece has model $X_3'Y$. As above, for the $X_3'Y$-piece shown in the right picture of Figure~\ref{fig:step1}, we can obtain a $Z_3$-piece and an $O_2''$-piece which are adjacent. Then, for our purpose, we can shrink this $O_2''$-piece a little bit so that it does not intersect the $Z_3$-piece. See the right picture of Figure~\ref{fig:step3}. Note that any $O_2''$-piece is essentially an $I$-bundle over a disk, which meets $K$ in a fiber. We also regard the shrunken piece as an $O_2''$-piece. For the $X_3'Y$-piece given in the right picture of Figure~\ref{fig:step3Ca}, we will obtain a $Y_3''Z$-piece.

Then, up to symmetry, we have listed all the possibilities in Case~(c).

Finally, the moves of the point in $\Sigma_0\cap K$ cannot affect $X_2''Z$-pieces. Otherwise, we will get a contradiction as in Step~5 or Step~6 in Section~\ref{sec:SF}. Since we can apply only finitely many times of the 0-moves and 1-moves, as in Step~6 in Section~\ref{sec:SF}, by Step~3, we can remove all the possible $O_2'$-pieces.

\begin{definition}\label{def:unexp}
We call a nonprime knot $K\subset S^3$ {\it unexpected} if for any nontrivial decomposition $K=\widehat{E}_1\#\widehat{E}_2$, the inequality $c(K)<c(\widehat{E}_1)+c(\widehat{E}_2)$ holds.
\end{definition}

Clearly we only need to show Theorem~\ref{thm:main} for those possible unexpected knots. We simply use this condition to avoid the $X_1''Z$-pieces, so that the discussions can be simpler. We summarize the results in this section as follows.

\begin{proposition}\label{pro:Xmodels}
Assume that $K$, $D$, $\mathcal{H}_D$, $\Sigma$, and $(M_i,A_i)$ be as before, where $\Sigma$ has the simple normal form, and $K$ is unexpected. Then we can obtain a new pair $(M_i',A_i')$ from $(M_i,A_i)$ by removing an open collar of $(\partial M_i,\partial A_i)$, so that the part of $M_i'$ in each 1-handle and 0-handle of $\mathcal{H}_D$ is a union of some parallel pieces and $X$-pieces, where each $X$-piece has one of the following models:
\begin{align*}
X_4,\, &X_3, X_2, X_1, XY_2, XZ_3,\quad Y_2, YZ_3, YZ_1,\quad Z_3, Z_2, Z_1, \bar{Z}_3,\\
&X_3'Y, X_2'Y, X_1'Y, X_2''Z,\quad Y_3', Y_3''Z, Y_2'Z, YZ_2',\quad O_2'', \bar{O}_2''.
\end{align*}
Moreover, all the possible $X$-pieces that lie in the same 0-handle, with model other than $O_2''$, $\bar{O}_2''$, $\bar{Z}_3$, can be given by using Table~\ref{tab:modelsX} and Table~\ref{tab:effectX}. In Table~\ref{tab:modelsX}, the eight cases correspond to those eight ones before the modification, and each model name represents one $X$-piece. In one row, those $X$-pieces outside the round brackets can appear simultaneously, and the $X$-piece followed by round brackets can be replaced by the ones in the bracket. All those possible variations of the $X$-pieces during the three steps of the modification are listed in Table~\ref{tab:effectX}.
\end{proposition}

\begin{table}[h]
\caption{The $X$-pieces that can appear simultaneously.}\label{tab:modelsX}
\centerline{
\begin{tabular}{l|l||l|l}
\hline
Case~1 & $Z_2$ ($Z_1,Z_3$), $Z_2$ ($Z_1,Z_3$)    & Case~4 & $Y_2'$, $Y_2$ ($YZ_3,Z_1$), $O_2'$\\ \hline
       & $X_4$, $Z_1$, $Z_1$                     & Case~5 & $XZ_3$, $Z_1$, $Z_1$\\ \hline
       & $X_3$, $Z_1$                            &        & $XY_2$, $Z_1$\\ \hline
       & $X_2$                                   &        & $X_1$\\ \hline
Case~2 & $Y_2$ ($YZ_3,Z_1$), $Y_2$ ($YZ_3,Z_1$)  & Case~6 & $YZ_1$, $Y_2$ ($YZ_3,Z_1$) ($YZ_1$)\\ \hline
Case~3 & $X_3'$, $Z_1$, $O_2'$                   & Case~7 & $YZ_2'$, $Y_1'$ ($Z_2',Z_1$)\\ \hline
       & $X_2'$, $O_2'$                          &        & $X_1'$, $O_2'$\\ \hline
       & $Y_3'$ ($Y_3'',O_2'$), $Y_1'$ ($Z_2',Z_1$) & Case~8 & $Y_2'$, $YZ_1$, $O_2'$\\ \hline
\end{tabular}}
\end{table}

\begin{table}[h]
\caption{All possible variations of the $X$-pieces.}\label{tab:effectX}
\centerline{
\begin{tabular}{l||l||l}
\hline
$X_1'\rightarrow X_1'Y$     & $Z_2'\rightarrow O_2'$ & $XY_2\rightarrow YZ_2'$\\ \hline
$X_2'\rightarrow X_2'Y$     & $O_2'\rightarrow$ Parallel & $Y_2\rightarrow Y_2'Z$\\ \hline
$X_3'\rightarrow X_3'Y$     & $Z_2\rightarrow$ Parallel  & $Y_3'\rightarrow Y_3''Z$\\ \hline
$Y_1'\rightarrow Z_1,O_2'$  & $X_1\rightarrow X_1'Y$ & $X_1'Y\rightarrow X_1''Z$\\ \hline
$Y_2'\rightarrow Y_2'Z$     & $X_2\rightarrow X_2'Y$ ($Z_2,O_2'$) & $X_2'Y\rightarrow X_2''Z$ ($Z_2$)\\ \hline
$Y_3''\rightarrow Y_3''Z$   & $X_3\rightarrow X_3'Y$ ($Y_3'$) ($Z_3,O_2'$) & $X_3'Y\rightarrow Y_3''Z$ ($Z_3$)\\ \hline
\end{tabular}}
\end{table}

\begin{proof}
To modify $(M_i,A_i)$, we only need to suitably push $\partial M_i$ inward. So we can require that $M_i'$ lies in the interior of $M_i$, and $\partial M_i'$ is parallel to $\partial M_i$. To find the list of the models and which $X$-pieces may appear simultaneously in a 0-handle of $\mathcal{H}_D$, we only need to check the discussions in Cases~1-8 and Steps~1-3. When $K$ is unexpected, the variation ``$X_1'Y\rightarrow X_1''Z$'' in Table~\ref{tab:effectX} cannot happen.
\end{proof}

\begin{remark}\label{rem:Z3}
We have not shown $O_2''$, $\bar{O}_2''$, $\bar{Z}_3$ in the tables. We will see that not as the other models in the list, they are irrelevant to the estimation of $c(K_i)$.
\end{remark}


\section{Modeled handle structure for manifold pairs}\label{sec:GP}
Let $K$, $D$, $\mathcal{H}_D$, $\Sigma$, $(M_i,A_i)$, $(M_i',A_i')$, $1\leq i\leq n$, be as in Section~\ref{sec:FDH}. Here $\Sigma$ has the simple normal form. Since $M_i'\subset M_i$, the $n$ new pairs are pairwise disjoint. So now a sphere $S_j\subseteq\Sigma$ corresponds to two spheres in some $\partial M_i'$ and $\partial M_k'$. Let $\bar{\Sigma}$ be the union of all $\partial M_i'$. Note that $\partial M_i'$ also lies in $\mathcal{C}$, and $\mathcal{I}(\partial M_i')=\mathcal{I}(\partial M_i)$.

In this section we deal with the parallel pieces in $M_i'$. We construct generalized parallelity 2-handles in $M_i'$. Then, by suitably modifying $(M_i',A_i')$ further, we give a new handle structure for the pair, where handles also have certain models.

\vspace{8pt}

First, we shrink each $O_2''$-piece as in the right picture of Figure~\ref{fig:step3}, and we view that original $O_2''$-piece as the union of the new one and two smaller parallel pieces. Then, in each handle of $\mathcal{H}_D$, the $X$-pieces in $M_i'$ are pairwise disjoint. An $X$-piece meets another $X$-piece only if they meet in a disk. Similar to the case of $(M_i,A_i)$, we have an induced handle structure $\mathcal{H}_{D,i}'$ for $(M_i',A_i')$, and the 2-handles of $\mathcal{H}_{D,i}'$ are essentially those ones of $\mathcal{H}_{D,i}$. Now the union of all the 2-handles of $\mathcal{H}_{D,i}'$ and all the parallel pieces in $M_i'$ naturally gives an $I$-bundle over a compact surface. It meets each $\bar{O}_2''$-piece and new $O_2''$-piece in an annulus, and it meets the $X$-piece in $M_i'$ with any other model in disjoint disks which are $I$-bundles over arcs. Let $\Gamma$ be a component of the $I$-bundle of the surface, and let $\Gamma_0$ be the base of $\Gamma$.

\begin{proposition}\label{pro:bundle}
The surface $\Gamma_0$ is a 2-sphere with holes, $\Gamma$ is a trivial $I$-bundle over $\Gamma_0$, and $\Gamma$ contains at least one 2-handle of $\mathcal{H}_{D,i}'$.
\end{proposition}

\begin{proof}
If $\partial\Gamma_0=\emptyset$, then $\partial\Gamma$ lies in $\bar{\Sigma}_0=\bar{\Sigma}\cup\{\pm1\}\times S_0$. Since $\Gamma\cap K=\emptyset$, we must have $\partial\Gamma\subseteq\{\pm1\}\times S_0$, which is impossible. Hence $\partial\Gamma_0\neq\emptyset$. If $\Gamma_0$ is nonorientable, then there is an orientation-reversing simple closed curve $\gamma$ in $\Gamma_0$. So the $I$-bundle over $\gamma$ gives a properly embedded M\"obius band in $M_i'$. Then since $\partial M_i'$ is a union of spheres, the boundary of the band must bound a disk in $\partial M_i'$, and this gives an embedded projective plane in $S^3$, which is impossible. Hence $\Gamma_0$ is orientable, and $\Gamma$ is a trivial $I$-bundle over $\Gamma_0$. Then $\Gamma_0$ is a 2-sphere with holes.

Now we let $\gamma$ be a component of $\partial\Gamma_0$. The $I$-bundle over $\gamma$ gives an annulus. If it meets some $\bar{O}_2''$-piece or new $O_2''$-piece, then $\Gamma$ intersects $[-1,1]\times U$, where $U$ is some rectangle in $R_v$ which contains a corner; see Section~\ref{sec:FDH}. Then, since all pieces in $[-1,1]\times U$ are parallel and meet some 2-handles of $\mathcal{H}_{D,i}'$, $\Gamma$ contains a 2-handle of $\mathcal{H}_{D,i}'$. If the annulus meets an $X$-piece with any other model, then it also meets an $X$-piece which lies in a 1-handle of $\mathcal{H}_{D,i}'$. Then, the $X$-piece has model $Z_1$, $Z_2$, or $Z_3$, and one can check that $\Gamma$ also contains a 2-handle of $\mathcal{H}_{D,i}'$.
\end{proof}

Below we identify $\Gamma$ with $\Gamma_0\times[0,1]$ and let $\Gamma_-=\Gamma_0\times\{0\}$ and $\Gamma_+=\Gamma_0\times\{1\}$. Then $\Gamma_-$ and $\Gamma_+$ are 2-spheres with holes. We let $\gamma_1,\ldots,\gamma_l$ be the components of $\partial\Gamma_0$, and let $\gamma_{s,-}$ and $\gamma_{s,+}$ be the two components in $\partial\Gamma_-$ and $\partial\Gamma_+$ corresponding to $\gamma_s$, respectively, where $1\leq s\leq l$.

Clearly $\Gamma_0$ is a disk if and only if $l=1$. In this case, $\Gamma$ looks like a 2-handle. In general, we may have $l>1$. Then, we need to extend $\Gamma$ in a certain way so that it looks like a 2-handle, as in \cite{La}. Here the idea is that we can always choose $l-1$ components of $\partial\Gamma_0$, so that for each chosen $\gamma_s$, we can find a suitable 3-ball in $M_i'$ which meets $\Gamma$ in the annulus $\gamma_s\times[0,1]$, and all these 3-balls are pairwise disjoint. Moreover, we can identify the 3-ball with $D^2\times[0,1]$ so that $\partial D^2\times[0,1]$ coincides with $\gamma_s\times[0,1]$, and either $K$ does not meet the $I$-bundle over $D^2$, or it meets the $I$-bundle in exactly one fiber. See Figure~\ref{fig:GPIB} for a sketch.

So we can regard the union of $\Gamma$ and the 3-balls as an $I$-bundle over a disk. We call this union a {\it generalized parallelity 2-handle} in $M_i'$. Below we show how to get these 3-balls. In the process, we need to consider some disk $D^2$ in $\bar{\Sigma}_0$ which meets the 2-handles of $\mathcal{H}_D$ in normal disks or disks having the form $\{\pm1\}\times R_f$. We use $I_2(D^2)$ to denote the number of those disks in $D^2$. Also, note that $D$ is a 4-valent graph in $S_0$, so the number of the regions $R_f$ in $S_0$ is equal to $c(K)+2$.

According to which sphere in $\bar{\Sigma}_0$ contains $\Gamma_{\pm}$, there are four cases.

\begin{figure}[h]
\includegraphics{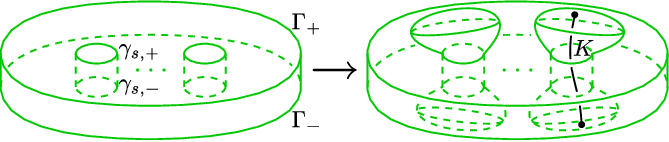}
\caption{Extend $\Gamma$ to a generalized parallelity 2-handle.}\label{fig:GPIB}
\end{figure}

\noindent {\bf Case 1.} Neither $\Gamma_+$ nor $\Gamma_-$ lies in $\bar{\Sigma}$.

We can require that $\Gamma_+\subset\partial B_+$. By Proposition~\ref{pro:bundle}, $\Gamma$ contains one 2-handle of $\mathcal{H}_{D,i}'$. Then, we have $\Gamma_-\subset\partial B_-$, and $\Gamma$ contains exactly one 2-handle of $\mathcal{H}_{D,i}'$. So in this case $\Gamma$ consists of one $[-1,1]\times R_f$ and some parallel pieces in the adjacent 0-handles and 1-handles of $\mathcal{H}_D$. It is indeed an $I$-bundle over a disk.

\vspace{5pt}

\noindent {\bf Case 2.} Exactly one of $\Gamma_+$ and $\Gamma_-$ lies in $\bar{\Sigma}$.

We can assume that $\Gamma_+\subset\bar{\Sigma}$ and $\Gamma_-\subset\partial B_-$. Then, $\Gamma_+$ lies in a sphere $\bar{S}_j\subseteq\bar{\Sigma}$. When $l>1$, we use $D^2_{s,+}$ to denote the disk in $\bar{S}_j$ bounded by $\gamma_{s,+}$ that is disjoint from the interior of $\Gamma_+$, and we let $D^2_{s,-}$ be the disk in $\partial B_-$ bounded by $\gamma_{s,-}$ that is disjoint from the interior of $\Gamma_-$, where $1\leq s\leq l$. Since $\Gamma$ contains a 2-handle of $\mathcal{H}_{D,i}'$ by Proposition~\ref{pro:bundle}, at most one $D^2_{s,-}$ satisfies $I_2(D^2_{s,-})\geq c(K)/2+1$. Then, we pick any $D^2_{s,-}$ that does not satisfy the inequality. Let $A_{s,0}=\gamma_s\times[0,1]$. Then there is a horizontal disk $D^2$ parallel to the disk $A_{s,0}\cup D^2_{s,-}$, with $\partial D^2\subset \bar{S}_j\setminus\Gamma_+$, as shown in Figure~\ref{fig:paraD}. So we can apply the $D^2$-surgery.

\begin{figure}[h]
\includegraphics{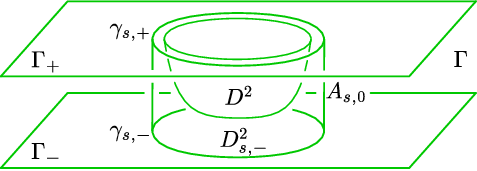}
\caption{A disk $D^2$ parallel to the disk $A_{s,0}\cup D^2_{s,-}$.}\label{fig:paraD}
\end{figure}

The sphere $\bar{S}_j$ will be replaced by some sphere $\bar{S}_j'$. If $D^2_{s,+}\cap K\neq\emptyset$, then it has two points. So the other disk in $\bar{S}_j$ bounded by $\gamma_{s,+}$, which contains $\Gamma_+$, will then be removed. Note that it intersects those 2-handles of $\mathcal{H}_D$ in at least $I_2(D^2_{s,-})+1$ normal disks, because $\Gamma$ contains a 2-handle of $\mathcal{H}_{D,i}'$ and $D^2_{s,-}$ does not satisfy the inequality. So $\mathcal{I}(\partial M_i')$ will be reduced. Then $\mathcal{I}(\Sigma)$ can also be reduced. Otherwise $D^2_{s,+}\cap K=\emptyset$. Now the disk in $D^2_{s,+}$ bounded by $\partial D^2$ will be removed. Note that the 2-sphere $D^2_{s,+}\cup A_{s,0}\cup D^2_{s,-}$ bounds a 3-ball that does not meet the interior of $\Gamma$, and this 3-ball does not meet $K$. The annulus $A_{s,0}$ must meet some $X$-piece in a 1-handle of $\mathcal{H}_D$, whose model can only be $Z_3$. Then this $Z_3$-piece meets $D^2_{s,+}$ in two disks and meets $D^2_{s,-}$ in one disk. So, we must have $I_2(D^2_{s,+})>I_2(D^2_{s,-})$, and $\mathcal{I}(\partial M_i')$ will be reduced. Then $\mathcal{I}(\Sigma)$ can be reduced as above.

So we can require that $\mathcal{I}(\Sigma)$ cannot be reduced and $l=1$. By the argument we also see that $\Gamma_{\pm}$ intersects the 2-handles of $\mathcal{H}_D$ in at most $c(K)/2+1$ disks.

\vspace{5pt}

\noindent {\bf Case 3.} Both $\Gamma_+$ and $\Gamma_-$ lie in the same sphere $\bar{S}_j\subseteq\bar{\Sigma}$.

Let $D^2_{s,+}$ be the disk in $\bar{S}_j$ bounded by $\gamma_{s,+}$ as in Case~2 where $1\leq s\leq l$. Then we can assume that $\Gamma_-$ lies in $D^2_{1,+}$. Similarly, let $D^2_{s,-}$ be the disk in $\bar{S}_j$ bounded by $\gamma_{s,-}$, and let $A_{s,0}$ be the $I$-bundle over $\gamma_s$. Note that $\Gamma$ lies in a 3-ball bounded by $\bar{S}_j$, and the annuli $A_{s,0}$ are disjoint from each other. So we see that $\gamma_{1,-}$ is the outermost circle in $D^2_{1,+}$, and $D^2_{s,-}$ lies in $D^2_{1,+}$ for $2\leq s\leq l$. So, if $l>1$, then we have pairwise disjoint spheres $D^2_{s,+}\cup A_{s,0}\cup D^2_{s,-}$ for $2\leq s\leq l$. Let $\Theta_s$ denote the sphere $D^2_{s,+}\cup A_{s,0}\cup D^2_{s,-}$. For a given $\Theta_s$ we have three cases.

Case 3(1). Neither $D^2_{s,+}$ nor $D^2_{s,-}$ meets $K$.

Then, $\bar{S}_j\setminus(D^2_{s,+}\cup D^2_{s,-})$ intersects $K$, and $\Theta_s$ bounds a 3-ball disjoint from $K$. Because $\Gamma\setminus A_{s,0}$, $\bar{S}_j\setminus(D^2_{s,+}\cup D^2_{s,-})$, and $K$ lie on the same side of $\Theta_s$, this 3-ball only intersects $\Gamma$ in the annulus $A_{s,0}$. Clearly, this 3-ball lies in $M_i'$. Then, we can add it to $\Gamma$, as shown in Figure~\ref{fig:GPIB}. We can also require that $I_2(D^2_{s,+})=I_2(D^2_{s,-})$. Otherwise, $\mathcal{I}(\partial M_i')$ and $\mathcal{I}(\Sigma)$ can be reduced, as in Case~2.

Case 3(2). Exactly one of $D^2_{s,+}$ and $D^2_{s,-}$ meets $K$.

We can assume that $D^2_{s,+}\cap K\neq\emptyset$ and $D^2_{s,-}\cap K=\emptyset$. Then $D^2_{s,+}\cap K$ contains two points. Similar to Case~2, we can find a disk $D^2$ parallel to $A_{s,0}\cup D^2_{s,-}$ where $\partial D^2\subset D^2_{s,+}$. See Figure~\ref{fig:paraD} for a sketch, where both $\Gamma_+$ and $\Gamma_-$ lie in $\bar{S}_j$ now and $D^2_{s,-}$ can be complicated. Then, since we can choose $D^2$ very close to $A_{s,0}\cup D^2_{s,-}$, we can also define $I_2(D^2)$, and we have $I_2(D^2)=I_2(D^2_{s,-})$.

Now, by applying the $D^2$-surgery, the disk in $\bar{S}_j$ bounded by $\gamma_{s,+}$ that contains $\Gamma_+$, $\Gamma_-$, and $D^2_{s,-}$ will then be removed. Since $\Gamma$ contains one 2-handle of $\mathcal{H}_{D,i}'$ by Proposition~\ref{pro:bundle}, this disk contains more normal disks in the 2-handles of $\mathcal{H}_D$ than $D^2_{s,-}$. So $\mathcal{I}(\partial M_i')$ will be reduced. Then $\mathcal{I}(\Sigma)$ can be reduced, as before.

So we can require that $\mathcal{I}(\Sigma)$ cannot be reduced and this case does not happen.

\begin{figure}[h]
\includegraphics{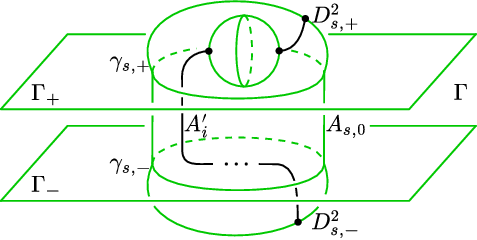}
\caption{The manifold pair with boundary lying in $\Theta_s\cup\bar{\Sigma}$.}\label{fig:SPinM}
\end{figure}

Case 3(3). Both $D^2_{s,+}$ and $D^2_{s,-}$ meet $K$.

Since $\bar{S}_j\cap K$ contains exactly two points, each of $D^2_{s,+}\cap K$ and $D^2_{s,-}\cap K$ only contains one point, and $\bar{S}_j\setminus(D^2_{s,+}\cup D^2_{s,-})$ does not meet $K$. As in Case~3(1), the sphere $\Theta_s$ bounds a 3-ball in $S^3$ that only intersects $\Gamma$ in $A_{s,0}$. But now the 3-ball meets $K$, and it may contain some spheres in $\bar{\Sigma}$. See Figure~\ref{fig:SPinM} for a sketch of this situation. The intersection of the 3-ball with $M_i'$ gives a manifold pair.

We can assume that $I_2(D^2_{s,+})\geq I_2(D^2_{s,-})$. Then, as in Case~3(2), we can pick a disk $D^2$ very close to $A_{s,0}\cup D^2_{s,-}$, with $\partial D^2\subset D^2_{s,+}$. Since $D^2$ meets $K$ in exactly one point, we can apply the $D^2$-surgery. Similar to Case~3(2), we can assume that $\mathcal{I}(\Sigma)$ cannot be reduced. This implies that the disk bounded by $\partial D^2$ which lies in $D^2_{s,+}$ will be removed, and we have $I_2(D^2_{s,+})=I_2(D^2_{s,-})$. Hence, the manifold pair given above is divided into a simple pair and a trivial pair by $D^2$. The trivial pair gives a 3-ball in $M_i'$ which only intersects $\Gamma$ in the annulus $A_{s,0}$. Then we can add this 3-ball to $\Gamma$, as shown in Figure~\ref{fig:GPIB}.

In summary, we can require that $\mathcal{I}(\Sigma)$ cannot be reduced. Then Case~3(2) does not happen, and in Case~3(1) and Case~3(3) we can add 3-balls in $M_i'$ to $\Gamma$. There are $l-1$ such 3-balls, which are pairwise disjoint, by the constructions. The union of $\Gamma$ and these 3-balls can be viewed as an $I$-bundle over a disk.

\begin{remark}\label{rem:addtri}
In Case~3(3), if $\Theta_s$ already bounds a trivial pair, then one can add it to $\Gamma$ directly, and has no need to cut it by $D^2$. Note that the $D^2$-surgery does not affect the correspondence between pairs and knots, by Remark~\ref{rem:corr}. So removing a simple pair as in Case~3(3) from $(M_i', A_i')$ does not affect the type of $\widehat{A}_i'$. Also, the $I$-bundle over a disk obtained in Case~3 can meet $K$ in at most one arc.
\end{remark}

\noindent {\bf Case 4.} $\Gamma_+$ and $\Gamma_-$ lie in different spheres in $\bar{\Sigma}$.

We assume that $\Gamma_+\subset\bar{S}_j$ and $\Gamma_-\subset\bar{S}_k$, where $j\neq k$. For $1\leq s\leq l$, let $D^2_{s,+}$ be the disk in $\bar{S}_j$ bounded by $\gamma_{s,+}$, as in Case~2. Similarly, let $D^2_{s,-}$ be the disk in $\bar{S}_k$ bounded by $\gamma_{s,-}$, and let $A_{s,0}$ denote the $I$-bundle over $\gamma_s$. Since $\bar{S}_k\cap K$ contains exactly two points, there is at most one $D^2_{s,-}$ which can meet $K$ twice. So, we can require that $D^2_{s,-}$ meets $K$ at most once for $2\leq s\leq l$ if $l>1$. Then, as in Case~3, we have pairwise disjoint spheres $\Theta_s$ for $2\leq s\leq l$, and according to whether $D^2_{s,+}$ or $D^2_{s,-}$ meets $K$, we have three cases for a given $\Theta_s$.

Case 4(1). Neither $D^2_{s,+}$ nor $D^2_{s,-}$ meets $K$.

Then, as in Case~3, $\Theta_s$ bounds a 3-ball disjoint from $K$. This 3-ball only meets $\Gamma$ in the annulus $A_{s,0}$, and it lies in $M_i'$. So, we can add this 3-ball to $\Gamma$, as shown in Figure~\ref{fig:GPIB}. Moreover, we can require that $I_2(D^2_{s,+})=I_2(D^2_{s,-})$. Otherwise $\mathcal{I}(\partial M_i')$ and $\mathcal{I}(\Sigma)$ can be reduced as in Case~2.

Case 4(2). Exactly one of $D^2_{s,+}$ and $D^2_{s,-}$ meets $K$.

Then, since $D^2_{s,-}$ meets $K$ at most once, we must have $D^2_{s,+}\cap K\neq\emptyset$. Then we have $D^2_{s,-}\cap K=\emptyset$, and $D^2_{s,+}\cap K$ contains two points. Now consider the 2-sphere obtained from $\bar{S}_j$ by replacing $D^2_{s,+}$ with $A_{s,0}\cup D^2_{s,-}$. The 2-sphere does not meet $K$, but both sides of it meet $K$. Hence we get a contradiction. See Figure~\ref{fig:paraD} for a sketch of this situation, where $\Gamma_+\subset\bar{S}_j$ and $\Gamma_-\subset\bar{S}_k$ now.

Case 4(3). Both $D^2_{s,+}$ and $D^2_{s,-}$ meet $K$.

Then, because $D^2_{s,-}$ intersects $K$ at most once, each of $D^2_{s,-}\cap K$ and $D^2_{s,+}\cap K$ contains exactly one point. So, as in Case~3, $\Theta_s$ bounds a 3-ball which only meets $\Gamma$ in $A_{s,0}$, and the intersection of the 3-ball with $M_i'$ provides a manifold pair. See Figure~\ref{fig:SPinM} for a sketch, where $\Gamma_+\subset\bar{S}_j$ and $\Gamma_-\subset\bar{S}_k$ now.

As in Case~3, we can assume that $I_2(D^2_{s,+})\geq I_2(D^2_{s,-})$ and pick the disk $D^2$. If the pair is simple, then by applying the $D^2$-surgery, we have $I_2(D^2_{s,+})=I_2(D^2_{s,-})$. Otherwise $\mathcal{I}(\Sigma)$ can be reduced. In the simple pair, $D^2$ then cuts off a trivial pair that meets $D^2_{s,-}$, and we can add it to $\Gamma$ as before. When the pair is nonsimple, a little more work is needed, since this $D^2$-surgery may not reduce $\mathcal{I}(\partial M_i')$.

Let $\bar{D}^2_{s,+}$ be the disk in $\bar{S}_j$ bounded by $\gamma_{s,+}$ that contains $\Gamma_+$. This $D^2$-surgery does not reduce $I_2(\partial M_i')$ if and only if $I_2(\bar{D}^2_{s,+})\leq I_2(D^2_{s,-})$. Similarly, let $\bar{D}^2_{s,-}$ be the disk in $\bar{S}_k$ bounded by $\gamma_{s,-}$ that contains $\Gamma_-$. By arguments as above, we can also assume that $I_2(\bar{D}^2_{s,-})\leq I_2(D^2_{s,+})$. Then we also need to consider $D^2_{1,+}$, $D^2_{1,-}$, and the corresponding sphere $\Theta_1$. There is some $r\neq s$ such that each of $D^2_{r,+}$ and $D^2_{r,-}$ also meets $K$ in one point, and there are $\bar{D}^2_{r,+}$ and $\bar{D}^2_{r,-}$ as well. Then, since $\Gamma$ contains a 2-handle of $\mathcal{H}_{D,i}'$ by Proposition~\ref{pro:bundle}, we have
\[I_2(D^2_{r,+})<I_2(\bar{D}^2_{s,+})\leq I_2(D^2_{s,-})<I_2(\bar{D}^2_{r,-}),\]
and similarly, $I_2(D^2_{r,-})<I_2(\bar{D}^2_{r,+})$. Hence, we can get a simple pair corresponding to $\Theta_r$. Otherwise $\mathcal{I}(\partial M_i')$ and $\mathcal{I}(\Sigma)$ can be reduced as before. So now we can find a required 3-ball for each of the spheres $\Theta_t$ with $t\neq s$.

In summary, Case~4(2) cannot happen, and we can require that $\mathcal{I}(\Sigma)$ cannot be reduced. Then in Case~4(1) and Case~4(3) we can add 3-balls in $M_i'$ to $\Gamma$. We can find $l-1$ such 3-balls, which are pairwise disjoint. The union of $\Gamma$ and the 3-balls can then be viewed as an $I$-bundle over a disk.

\begin{remark}\label{rem:notwoarc}
In Case~4(3), if $\Theta_s$ or $\Theta_r$ already bounds a trivial pair, then one can add it to $\Gamma$ directly as in Remark~\ref{rem:addtri}. In general one needs to remove some simple pair from $(M_i', A_i')$. This does not affect the type of $\widehat{A}_i'$. Also, this $I$-bundle over a disk given in Case~4 meets $K$ in at most one arc. Otherwise, by choosing two arcs in $\bar{S}_j$ and $\bar{S}_k$ connecting the points in $K$, we will get a trivial connected summand of $K$; see Figure~\ref{fig:TMpoints} for a sketch. This is a contradiction.
\end{remark}

\begin{figure}[h]
\includegraphics{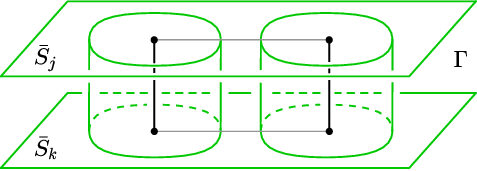}
\caption{Obtain a trivial knot from two arcs in $K$.}\label{fig:TMpoints}
\end{figure}

So by assuming that $\mathcal{I}(\Sigma)$ cannot be reduced, we can extend $\Gamma$ to a generalized parallelity 2-handle $\tilde{\Gamma}$ in $M_i'$, as in Cases~1-4. We see that $\tilde{\Gamma}$ can be identified with an $I$-bundle over a disk $D^2$, where either $K\cap\tilde{\Gamma}=\emptyset$ or $K\cap\tilde{\Gamma}$ is a fiber. Note that the $I$-bundle over $\partial D^2$ cannot meet $\bar{O}_2''$-pieces or those new $O_2''$-pieces. Otherwise, we must have $K\cap\tilde{\Gamma}\neq\emptyset$, and we can obtain a trivial connected summand of $K$ as shown in Figure~\ref{fig:TMpoints}, which is a contradiction.

\vspace{8pt}

Below we modify $(M_i',A_i')$ further so that it becomes a new pair $(M_i'',A_i'')$, and we construct a modeled handle structure for $(M_i'',A_i'')$. There are two steps.

\vspace{5pt}

\noindent {\bf Step 1.} We modify $(M_i',A_i')$ and construct the 2-handles and 3-handles.

We assume that $\mathcal{I}(\Sigma)$ cannot be reduced. Recall that the union of all 2-handles of $\mathcal{H}_{D,i}'$ and all parallel pieces in $M_i'$ is an $I$-bundle over a compact surface. It has finitely many components, and one can extend each component $\Gamma$ to a generalized parallelity 2-handle $\tilde{\Gamma}$ in $M_i'$. In the process, we may need to remove a simple pair from $(M_i',A_i')$. We let $\bar{\Gamma}$ denote the union of $\tilde{\Gamma}$ and the possible simple pair. Then for any components $\Gamma\neq\Gamma'$ satisfying $\bar{\Gamma}\cap\bar{\Gamma}'\neq\emptyset$, according to whether $\bar{\Gamma}\cap\Gamma'=\emptyset$, we see that either $\bar{\Gamma}\subset\bar{\Gamma}'\setminus\Gamma'$ or $\Gamma'\subset\bar{\Gamma}\setminus\Gamma$. Then, among the components, we can inductively pick $\Gamma_1,\ldots,\Gamma_t$, so that $\bar{\Gamma}_r\cap\bar{\Gamma}_s=\emptyset$ for $1\leq r<s\leq t$, and the union of $\bar{\Gamma}_1,\ldots,\bar{\Gamma}_t$ contains all the components of the $I$-bundle. Hence, this union contains all 2-handles of $\mathcal{H}_{D,i}'$ and all parallel pieces in $M_i'$. Then by removing the possible simple pairs that correspond to $\bar{\Gamma}_1,\ldots,\bar{\Gamma}_t$ from $(M_i',A_i')$, we get $M_i''$.

By the constructions, $M_i''$ contains the disjoint generalized parallelity 2-handles $\tilde{\Gamma}_1,\ldots,\tilde{\Gamma}_t$, and $M_i''\subseteq M_i'$. The closure of $M_i''\setminus(\tilde{\Gamma}_1\cup\cdots\cup\tilde{\Gamma}_t)$ consists of $X$-pieces and possibly $B_{\pm}$. We note that there are no $\bar{O}_2''$-pieces or the new $O_2''$-pieces, since such a piece must lie in $\bar{\Gamma}_1\cup\cdots\cup\bar{\Gamma}_t$. Otherwise we can obtain a trivial connected summand of $K$ as in Figure~\ref{fig:TMpoints}. If there exist $Y_3''Z$-pieces, then the pieces meet $K$ in some arcs, and we isotope the arcs into $\tilde{\Gamma}_1\cup\cdots\cup\tilde{\Gamma}_t$, keeping $M_i''$ invariant. As before, each $\tilde{\Gamma}_s$ still meets $K$ in at most one arc; see Remark~\ref{rem:notwoarc}. The $Y_3''Z$-pieces now become $Z_3$-pieces. Then, after applying the isotopy, we let $A_i''=K\cap M_i''$. So we get $(M_i'',A_i'')$, where $\widehat{A}_i''$ gives the same knot as $\widehat{A}_i'$ and $\widehat{A}_i$. Then for $1\leq s\leq t$, we let $(\tilde{\Gamma}_s,\tilde{\Gamma}_s\cap A_i'')$ be a 2-handle. The possible $B_+$ and $B_-$ are 3-handles.

\vspace{5pt}

\noindent {\bf Step 2.} We construct the 1-handles and 0-handles in $(M_i'',A_i'')$.

We remove the above 2-handles and 3-handles from $(M_i'',A_i'')$, and then denote the closure of the resulting pair by $(H_i,\lambda_i)$. Then by Step~1, $(H_i,\lambda_i)$ is a union of some $X$-pieces, where each $X$-piece has one of the following models:
\begin{align*}
X_4, X_3, X_2,\, &X_1, XY_2, XZ_3,\quad Y_2, YZ_3, YZ_1,\quad Z_3, Z_2, Z_1, \bar{Z}_3,\\
&X_3'Y, X_2'Y, X_1'Y, X_2''Z,\quad Y_3', Y_2'Z, YZ_2'.
\end{align*}
Since $M_i''$ is connected, $H_i$ is also connected. Then $H_i$ is a handlebody in $S^3$, and the above 2-handles meet $H_i$ in disjoint annuli. Then, since $\partial M_i''$ bounds pairwise disjoint 3-balls in $S^3$, we see that $\partial H_i$ is a Heegaard surface for $S^3$. So, we call $H_i$ an {\it unknotted} handlebody in $S^3$. Below we define the handles in it.

First, we consider the union of all the $X$-pieces in $H_i$ with models $Z_1$ and $YZ_1$. Each component of this union is either a 3-ball or a solid torus. In the latter case, the solid torus is $H_i$, and then $K$ is the unknot, which is a contradiction. So, each component is a 3-ball. We view it as an extension of a $Z_1$-piece or $YZ_1$-piece, and we call it a {\it generalized 1-handle with model $Z_1$}.

Then we consider the union of all the $X$-pieces in $H_i$ with model $Z_2$. As above, each component of this union is either a 3-ball or a solid torus, and the latter case cannot happen. Otherwise, $H_i$ is an unknotted solid torus, and $K$ is trivial. Then each component is a 3-ball. We view this 3-ball as an extension of a $Z_2$-piece, and we call it a {\it generalized 1-handle with model $Z_2$}.

Finally we consider the union of all the $X$-pieces in $H_i$ with models $Z_3$ and $\bar{Z}_3$. As above, each component of this union is either a 3-ball or a solid torus. Now, in the latter case, $H_i$ is an unknotted solid torus which does not meet $K$, and in $M_i''$ there are three 2-handles which meet $H_i$ in disjoint annuli. Then each of the three 2-handles must meet $K$. So $(M_i'',A_i'')$ is a simple pair, and we get a contradiction. So, as above, each component is a 3-ball. We view it as an extension of a $Z_3$-piece or $\bar{Z}_3$-piece, and we call it a {\it generalized 1-handle with model $Z_3$}.

Now, we define the generalized 1-handles with models $Z_1$, $Z_2$, and $Z_3$ (together with their intersections with $K$) to be the 1-handles in $(M_i'',A_i'')$. Then, we define the $X$-piece in $H_i$ (together with its intersection with $K$) that does not lie in such a 1-handle to be a 0-handle in $(M_i'',A_i'')$. It also has a certain model.

\begin{definition}\label{def:MHS}
All 3-handles, 2-handles, 1-handles, and 0-handles constructed in Step~1 and Step~2 together provide a handle structure for $(M_i'',A_i'')$. We call it the {\it modeled handle structure} for $(M_i'',A_i'')$ and denote it by $\mathcal{H}_{X,i}$.
\end{definition}

Note that each 0-handle of $\mathcal{H}_{X,i}$ lies in some 0-handle of $\mathcal{H}_D$, and a 1-handle of $\mathcal{H}_{X,i}$ may meet a 0-handle of $\mathcal{H}_D$ in some $X$-pieces. Clearly, for a fixed $0\leq j\leq 3$, two $j$-handles of $\mathcal{H}_{X,i}$ are disjoint. Each 1-handle of $\mathcal{H}_{X,i}$ meets 0-handles of $\mathcal{H}_{X,i}$ in two disks, and each 2-handle of $\mathcal{H}_{X,i}$ meets $H_i$ in an annulus. Now two handles of $\mathcal{H}_{X,i}$ can intersect in more than one disks. In the next section, we will use $\mathcal{H}_{X,i}$ to estimate $c(K_i)$. We summarize the results in this section as follows.

\begin{theorem}\label{thm:decompose}
Let $K\subset S^3$ be an unexpected knot. Let $D$ be a diagram of $K$ that has $c(K)$ crossings. Let $\Sigma$ be the union of spheres in the maximal sphere system of $K$ so that $\Sigma$ has the simple normal form and the minimal complexity, with respect to $\mathcal{H}_D$. Let $(M_i,A_i)$, $1\leq i\leq n$, be the manifold pairs obtained by cutting $S^3$ along $\Sigma$. Then, for each $(M_i,A_i)$, there exists a new pair $(M_i'',A_i'')$ so that

(1) $M_i''$ lies in the interior of $M_i$, and $\widehat{A}_i''$ gives the same knot as $\widehat{A}_i$;

(2) $(M_i'',A_i'')$ admits a modeled handle structure $\mathcal{H}_{X,i}$, where each 3-handle can only be $B_+$ or $B_-$, each 2-handle is a generalized parallelity 2-handle which meets $A_i''$ in at most one arc, each 1-handle is a generalized 1-handle with model $Z_1$, $Z_2$, or $Z_3$, and each 0-handle is an $X$-piece that has one of the following models:
\begin{align*}
&X_4, X_3, X_2, X_1, XY_2, XZ_3,\quad Y_2, YZ_3,\\
&X_3'Y, X_2'Y, X_1'Y, X_2''Z,\quad Y_3', Y_2'Z, YZ_2';
\end{align*}

(3) the union of 1-handles and 0-handles of $\mathcal{H}_{X,i}$ is an unknotted handlebody.\\
Moreover, in a 0-handle of $\mathcal{H}_D$, all those possible $X$-pieces coming from 1-handles and 0-handles of all the $\mathcal{H}_{X,i}$ can be obtained by using Table~\ref{tab:models} and Table~\ref{tab:effect}, where the eight cases in Table~\ref{tab:models} correspond to the ones in Table~\ref{tab:modelsX}, all possible variations of the $X$-pieces are listed in Table~\ref{tab:effect}, as in the case of Table~\ref{tab:effectX}, and all the possible $\bar{Z}_3$-pieces are omitted, as explained in Remark~\ref{rem:Z3}.
\end{theorem}

\begin{table}[h]
\caption{The possible $X$-pieces in a 0-handle of $\mathcal{H}_D$.}\label{tab:models}
\centerline{
\begin{tabular}{l|l||l|l}
\hline
Case~1 & $Z_2$ ($Z_1,Z_3$), $Z_2$ ($Z_1,Z_3$)    & Case~4 & $Y_2'Z$, $Y_2$ ($YZ_3,Z_1$)\\ \hline
       & $X_4$, $Z_1$, $Z_1$                     & Case~5 & $XZ_3$, $Z_1$, $Z_1$\\ \hline
       & $X_3$, $Z_1$                            &        & $XY_2$, $Z_1$\\ \hline
       & $X_2$                                   &        & $X_1$\\ \hline
Case~2 & $Y_2$ ($YZ_3,Z_1$), $Y_2$ ($YZ_3,Z_1$)  & Case~6 & $YZ_1$, $Y_2$ ($YZ_3,Z_1$) ($YZ_1$)\\ \hline
Case~3 & $X_3'Y$, $Z_1$                          & Case~7 & $YZ_2'$, $Z_1$\\ \hline
       & $X_2'Y$                                 &        & $X_1'Y$\\ \hline
       & $Y_3'$ ($Z_3$), $Z_1$                   & Case~8 & $Y_2'Z$, $YZ_1$\\ \hline
\end{tabular}}
\end{table}

\begin{table}[h]
\caption{Possible variations of the above $X$-pieces.}\label{tab:effect}
\centerline{
\begin{tabular}{l||l||l}
\hline
$X_1\rightarrow X_1'Y$                  & $Z_2\rightarrow$ Null   & $XY_2\rightarrow YZ_2'$\\ \hline
$X_2\rightarrow X_2'Y$ ($Z_2$)          & $Y_2\rightarrow Y_2'Z$  & $X_2'Y\rightarrow X_2''Z$ ($Z_2$)\\ \hline
$X_3\rightarrow X_3'Y$ ($Y_3'$) ($Z_3$) & $Y_3'\rightarrow Z_3$   & $X_3'Y\rightarrow Z_3$\\ \hline
\end{tabular}}
\end{table}

\begin{proof}
For each $(M_i,A_i)$, we can get $(M_i',A_i')$ by Proposition~\ref{pro:Xmodels}. Since now $\mathcal{I}(\Sigma)$ cannot be reduced, we can then get $(M_i'',A_i'')$ and $\mathcal{H}_{X,i}$ by the above construction of generalized parallelity 2-handles and Steps~1-2. Then we only need to check the tables. By Steps~1-3 in Section~\ref{sec:FDH}, $X_1'$, $X_2'$, $X_3'$, $Y_2'$, and $Y_3''$ in Table~\ref{tab:modelsX} are replaced by $X_1'Y$, $X_2'Y$, $X_3'Y$, $Y_2'Z$, and $Y_3''Z$, respectively, while $Y_1'$ is replaced by $Z_1$, and $Z_2'$ and $O_2'$ are removed. Also, in Table~\ref{tab:effectX}, ``$X_1'Y\rightarrow X_1''Z$'' cannot happen, since $K$ is unexpected. Then by above Step~1, there are no $\bar{O}_2''$-pieces and $O_2''$-pieces in the 0-handles or 1-handles of $\mathcal{H}_{X,i}$, and all the possible $Y_3''Z$-pieces become $Z_3$-pieces. So, by Table~\ref{tab:modelsX} and Table~\ref{tab:effectX}, we have Table~\ref{tab:models} and Table~\ref{tab:effect}.
\end{proof}

\begin{remark}\label{rem:notallX}
The condition ``unexpected'' is only used to avoid $X_1''Z$-pieces. Since $(M_i'',A_i'')$ only contains part of the $X$-pieces in $(M_i',A_i')$ whose models are not $O_2''$ and $\bar{O}_2''$, actually in Table~\ref{tab:models} some $X$-pieces may not appear.
\end{remark}

\begin{remark}
The union of all $\partial M_i''$ consists of $2n$ spheres. So, we can have $2n-1$ manifold pairs by cutting $(S^3,K)$ along these spheres. Since $(M_i'',A_i'')$ is obtained from $(M_i',A_i')$ by removing some possible simple pairs and applying an isotopy, we see that among those $2n-1$ pairs, the pairs other than $(M_i'',A_i'')$, $1\leq i\leq n$, must be simple. So $(S^3,K)$ is decomposed into $n-1$ simple pairs and all the $(M_i'',A_i'')$. Also, the unknotted $H_i$, $1\leq i\leq n$, are pairwise unlinked in $S^3$.
\end{remark}


\section{Estimation of the number of crossings}\label{sec:EN}
In this section we finish the proof of Theorem~\ref{thm:main}. We can require that $K$ is an unexpected knot, and $D$ is a diagram of $K$ that has $c(K)$ crossings. By Section~\ref{sec:SF}, we can obtain a maximal sphere system of $K$ so that the corresponding $\Sigma$ has the simple normal form and the minimal $\mathcal{I}(\Sigma)$, with respect to $\mathcal{H}_D$. So, we have pairs $(M_i,A_i)$, $1\leq i\leq n$, corresponding to those connected summands $K_i$, $1\leq i\leq n$, of $K$. Then, by Section~\ref{sec:FDH} and Section~\ref{sec:GP}, we have $(M_i'',A_i'')$ and $\mathcal{H}_{X,i}$, $1\leq i\leq n$, and we will use their properties given in Theorem~\ref{thm:decompose} to estimate $c(K_i)$.

Assume that $\partial M_i''$ consists of the spheres $S_{i,j}$, where $1\leq j\leq m_i$. So $S_{i,j}$ meets $K$ in exactly two points. We pick an arc $\mu_{i,j}\subset S_{i,j}$ connecting the two points and let $\mu_i$ be the union of $\mu_{i,j}$, $1\leq j\leq m_i$. Then by Theorem~\ref{thm:decompose}, $\widehat{A}_i''=A_i''\cup\mu_i$ gives the knot $K_i$. To prove Theorem~\ref{thm:main}, we need to isotope $A_i''$ and choose $\mu_i$ suitably so that the diagram given by $p(A_i'')\cup p(\mu_i)$ is as simple as possible. Then, we give the upper bound on $c(K_1)+\cdots+c(K_n)$. There are four steps.

\vspace{5pt}

\noindent {\bf Step 1.} We can choose $\mu_i\subset H_i$ avoiding the 1-handles of $\mathcal{H}_{X,i}$ with model $Z_1$.

Recall that $H_i$ is the union of all the 1-handles and 0-handles of $\mathcal{H}_{X,i}$ and each 2-handle of $\mathcal{H}_{X,i}$ meets $H_i$ in an annulus. A 2-handle of $\mathcal{H}_{X,i}$ may intersect $A_i''$ in an arc. We first isotope the possible arc into any 1-handle of $\mathcal{H}_{X,i}$ adjacent to the 2-handle, keeping $M_i''$ invariant. Note that this 1-handle of $\mathcal{H}_{X,i}$ cannot have model $Z_1$. Otherwise, by picking the $\mu_{i,j}$, we see that $K$ is not connected. So, the model of this 1-handle can only be $Z_2$ or $Z_3$.

After applying the isotopy, the 2-handles of $\mathcal{H}_{X,i}$ are disjoint from $A_i''$. Because the intersection of each $S_{i,j}$ with all these 2-handles is a union of pairwise disjoint disks, we can pick the $\mu_{i,j}$ so that it is disjoint from the 2-handles of $\mathcal{H}_{X,i}$. So, we can let $\mu_i\subset H_i$. Actually, $\mu_i$ can also avoid the 1-handles with model $Z_1$.

Consider a 2-handle of $\mathcal{H}_{X,i}$ that meets some 1-handles with model $Z_1$. We use $B$ to denote the union of this 2-handle and all the 1-handles with model $Z_1$ which are adjacent to it. Then, $B$ is a 3-ball, and $B\cap\partial M_i''$ is a 2-sphere with holes that lies in some $S_{i,j}$. Now, each boundary circle of $B\cap\partial M_i''$ bounds a disk in $\partial B$ and a disk in $S_{i,j}$ which do not contain $B\cap\partial M_i''$. The two disks give a 2-sphere. Note that the disk in $\partial B$ meets $K$ in exactly two points. So, the disk in $S_{i,j}$ either does not meet $K$ or meets $K$ also in exactly two points, and there is only one such disk $D^2\subset S_{i,j}$ that meets $K$. See Figure~\ref{fig:AvoidZ1} for a sketch of the situation. Then, we can pick $\mu_{i,j}\subset D^2$. So we can require that $\mu_i$ is disjoint from $B$.

\begin{figure}[h]
\includegraphics{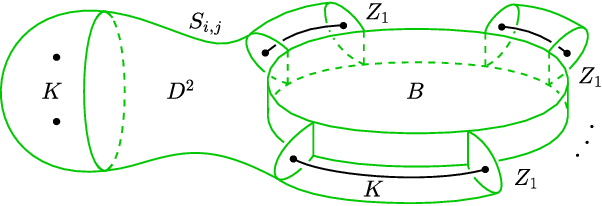}
\caption{A 2-handle and the adjacent 1-handles with model $Z_1$.}\label{fig:AvoidZ1}
\end{figure}

In general, each 1-handle with model $Z_1$ must meet some 2-handle of $\mathcal{H}_{X,i}$, and we can have many 3-balls as the above $B$. These 3-balls meet a given $S_{i,j}$ in some 2-spheres with holes. Then the complement of their union in $S_{i,j}$ consists of many connected components. By the above argument, the points in $K\cap S_{i,j}$ must lie in the same component. So we can pick $\mu_{i,j}$ in the component as before, and then $\mu_i$ is disjoint from all the 1-handles with model $Z_1$.

\vspace{5pt}

\noindent {\bf Step 2.} The part of $\mu_i$ in each 1-handle of $\mathcal{H}_{X,i}$ can have a standard form.

Now among the 1-handles of $\mathcal{H}_{X,i}$, only those 1-handles with models $Z_2$ and $Z_3$ can meet $\mu_i$. Note that such a 1-handle looks like a $Z_2$-piece or $Z_3$-piece. Actually it contains a $Z_2$-piece or $Z_3$-piece that lies in some $[-1,1]\times R_e$. If the 1-handle of $\mathcal{H}_{X,i}$ contains some arcs of $A_i''$ coming from the 2-handles of $\mathcal{H}_{X,i}$, as described in Step~1, then by identifying $R_e$ with $[-1/2,1/2]\times[-1,1]$, we can require that such arcs lie in $[-1,1]\times[-1/2,1/2]\times\{0\}$. Then for a 1-handle with model $Z_2$, there is at most one such arc. Otherwise, we have two such arcs, and we can pick two arcs of $\mu_i$ lying in $[-1,1]\times[-1/2,1/2]\times\{0\}$ and connecting their endpoints. Then, $K$ is not connected, and we get a contradiction. Similarly, for a 1-handle with model $Z_3$, there are at most two such arcs. Otherwise, we have three such arcs, and they give a trivial connected summand of $K$ if we pick three arcs of $\mu_i$ connecting their endpoints and lying in $[-1,1]\times[-1/2,1/2]\times\{0\}$. So, we also get a contradiction. Further, if a 1-handle of $\mathcal{H}_{X,i}$ with model $Z_3$ contains two arcs of $A_i''$ coming from the 2-handles of $\mathcal{H}_{X,i}$, then we can always replace such two arcs by another arc in $[-1,1]\times[-1/2,1/2]\times\{0\}$, as shown in Figure~\ref{fig:subarc}. If we pick some $\mu_{i,j}$ connecting the two arcs, then we get the required arc up to isotopy.

Hence we can require that any 1-handle of $\mathcal{H}_{X,i}$ contains at most one arc of $A_i''$ coming from the 2-handles of $\mathcal{H}_{X,i}$, and any such arc is horizontal. If the 1-handle has model $Z_2$, then we can further require that the projection of such an arc to $S_0$ is disjoint from $D$. If the 1-handle has model $Z_3$, then we can further require that the arc is as the ones shown in Figure~\ref{fig:subarc}, up to symmetry.

\begin{figure}[h]
\includegraphics{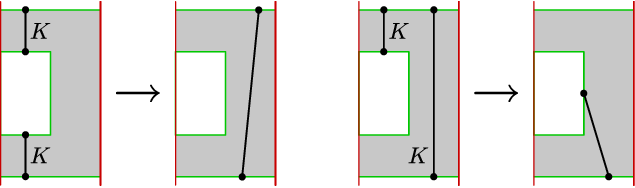}
\caption{Replace two arcs by one in $[-1,1]\times[-1/2,1/2]\times\{0\}$.}\label{fig:subarc}
\end{figure}

Below we simplify the part of $\mu_i$ in a 1-handle with model $Z_2$ or $Z_3$. As before, we assume that the 1-handle intersects $[-1,1]\times R_e$ in a $Z_2$-piece or $Z_3$-piece. The rectangle $[-1,1]\times[-1/2,1/2]\times\{0\}$ then intersects the piece in a disk. We denote this disk by $\Omega$. Then we can identify the 1-handle of $\mathcal{H}_{X,i}$ with $\Omega\times[-1,1]$, where $\Omega\times\{0\}$ corresponds to $\Omega$. Now we let $\alpha$ denote a component of $\partial\Omega\cap\partial M_i''$, which is an arc, and we consider the part of $\mu_i$ in the disk $\alpha\times[-1,1]$.

If $\alpha\times[-1,1]$ meets some $\mu_{i,j}$, then $\mu_{i,j}$ does not meet $\partial\alpha\times[-1,1]$, and we can require that it meets $\alpha\times\{-1,1\}$ transversely. Then the intersection of $\alpha\times[-1,1]$ and $\mu_{i,j}$ consists of some disjoint arcs. Let $\beta$ be such an arc. Then we can assume that $\beta$ has the form $\{s\}\times[-1,0]$ or $\{s\}\times[0,1]$ if it has one endpoint in $K$, and $\beta$ has the form $\{s\}\times[-1,1]$ if its endpoints lie in different sides of $\alpha\times[-1,1]$, up to isotopy, where $s$ is a point in $\alpha$. If its endpoints lie in the same side of $\alpha\times[-1,1]$, then $\beta$ cuts off a disk that is disjoint from $\partial\alpha\times\{-1,1\}$. We can move the part of $\mu_{i,j}$ in the disk into the adjacent 0-handle of $\mathcal{H}_{X,i}$ if the disk does not meet $K$. So in this case we can assume that the disk meets $K$, and $\beta$ consists of three straight arcs as shown in the left picture of Figure~\ref{fig:arcsur}. The picture also shows another two forms of $\beta$. Note that there can be parallel arcs with the same form.

\begin{figure}[h]
\includegraphics{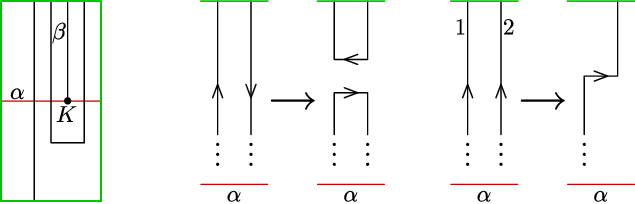}
\caption{The possible $\beta$ and the surgeries on $\mu_{i,j}$.}\label{fig:arcsur}
\end{figure}

If $\mu_{i,j}$ intersects $\alpha\times\{1\}$ in more than one points, then we apply surgeries on it as follows. Assume that in $\alpha\times\{1\}$ there are two adjacent intersection points, and each point is contained in a straight arc lying in some $\beta$ as shown in the right two pictures of Figure~\ref{fig:arcsur}, where we have given $\mu_{i,j}$ an orientation. These two oriented arcs induce orientations on $[-1,1]$. If their induced orientations are different, then we replace these two arcs by the arcs as shown in the middle picture of Figure~\ref{fig:arcsur}. Then $\mu_{i,j}$ becomes a union of an arc and a circle where the circle can be removed. Hence the number of intersection points in $\alpha\times\{1\}$ can be reduced. If the induced orientations of the two arcs are the same, then according to their order along $\mu_{i,j}$, we can replace the two arcs, together with the arc lying between them, by one arc as shown in the right picture of Figure~\ref{fig:arcsur}. Now $\mu_{i,j}$ becomes a new arc, and there are fewer intersection points in $\alpha\times\{1\}$. For $\alpha\times\{-1\}$ we have parallel results.

So we can assume that $\mu_{i,j}$ meets each of $\alpha\times\{1\}$ and $\alpha\times\{-1\}$ in at most one point. Hence $\mu_{i,j}$ meets $\alpha\times[-1,1]$ in at most one arc, which has one of the forms $\{s\}\times[-1,0]$, $\{s\}\times[0,1]$, or $\{s\}\times[-1,1]$, for some point $s$ in $\alpha$. After simplifying the part of $\mu_{i,j}$ in $\alpha\times[-1,1]$, we can repeat the process for the other components of $\partial\Omega\cap\partial M_i''$. This can affect $\mu_{i,j}\cap(\alpha\times[-1,1])$ only if the intersection is the arc having the form $\{s\}\times[-1,1]$, and this arc will be removed in this process. Hence, in each component of $\Omega\times[-1,1]\cap\partial M_i''$, there is at most one arc coming from $\mu_i$, and such an arc has one of the forms. This also holds for the other 1-handles with models $Z_2$ and $Z_3$, by repeating the process.

\begin{figure}[h]
\includegraphics{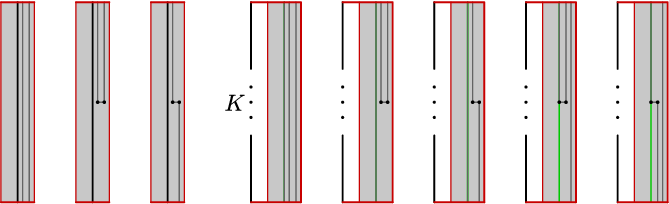}
\caption{The possible part of $\mu_i$ in a 1-handle of $\mathcal{H}_{X,i}$.}\label{fig:1-Model}
\end{figure}

In summary, we can require that the part of $\mu_i$ in each 1-handle with model $Z_2$ or $Z_3$ has a {\it standard form}, as shown in Figure~\ref{fig:1-Model}, where some of the arcs may not appear. For each 1-handle with model $Z_2$, there are at most two arcs coming from $\mu_i$, and the first three pictures show the possible two arcs. For each 1-handle with model $Z_3$, there are at most three arcs coming from $\mu_i$, as shown in the remaining five pictures. In the pictures, the arcs which do not meet $K$ may not appear. This provides the cases when there are fewer arcs. Note that in a ``curved disk'' the arc lies in the vertical part. Also, the pictures only show the projections, so in general a picture represents more than one cases, which differ by symmetries.

\vspace{5pt}

\noindent {\bf Step 3.} We simplify the part of $\mu_i$ in each 0-handle of $\mathcal{H}_{X,i}$.

Now let $B$ be a 0-handle of $\mathcal{H}_{X,i}$, and let $\Upsilon$ be a component of $B\cap\partial M_i''$. Then $\Upsilon$ is a disk, and it meets those adjacent 1-handles with models $Z_2$ and $Z_3$ in some disjoint arcs. Let $\alpha\subset\partial\Upsilon$ be the union of all such arcs. If $\Upsilon$ meets some $\mu_{i,j}$, then by Step~1 and Step~2, $\mu_{i,j}$ does not meet $\partial\Upsilon\setminus\alpha$, and it intersects each component of $\alpha$ transversely in at most one point. So, $\mu_{i,j}\cap\Upsilon$ is a union of disjoint arcs, and by suitably identifying $\Upsilon$ with some convex polygon, up to isotopy we can assume that these arcs are straight. Then as in Step~2, we can apply surgeries on $\mu_{i,j}$ if it meets $\Upsilon$ in more than one arcs. Here we choose adjacent intersection points in $\partial\Upsilon$ which lie in different arcs, and the surgery reduces the number of arcs. So, we can require that $\mu_{i,j}$ meets $\Upsilon$ in at most one arc. Then, as in Step~2, by repeating the process, we can require that $\mu_i$ meets each component of $B\cap\partial M_i''$ in at most one arc, and this holds for each 0-handle of $\mathcal{H}_{X,i}$. The result in Step~2 still holds.

Then for any 0-handle $B$ of $\mathcal{H}_{X,i}$, we can list all the possible cases of $\mu_i\cap B$ up to isotopy. The projections of the arcs in $\mu_i\cap B$ and $K\cap B$ then give crossings in the corresponding $R_v$. In each case, we can obtain the minimum of the number of crossings. However, we will not provide all these cases, since the 0-handles of $\mathcal{H}_{X,i}$ have $15$ models, and the list will then be too long. Instead, in Table~\ref{tab:0-Model}, we provide an upper bound on the minimum numbers of crossings among all those cases for a fixed model. We also provide an example which realizes the upper bound. In each picture listed in Table~\ref{tab:0-Model}, the model name and the upper bound are marked at the corners of $R_v$. One can compare the pictures with the ones in Section~\ref{sec:FDH}.

To verify the upper bounds (or list all cases), the following facts are useful.

(1) For a 0-handle $B$ with model $X$, any component of $B\cap\partial M_i''$ meets $\mu_i$ in at most one arc, and the number of components is marked in the model name $X$.

(2) For a possible arc as in (1), each endpoint of this arc either lies in $K$ or lies in an adjacent 1-handle of $\mathcal{H}_{X,i}$, where the part of $\mu_i$ has some standard form.

(3) The projection of $B$ to $S_0$ is a disk which contains the images of the arcs in $\mu_i\cap B$ and $K\cap B$. We can identify it with a convex polygon and then ``straighten the image arcs'' as much as possible. Then, in most cases, the number of crossings can be estimated by checking whether the boundaries of the image arcs are linked to each other in the boundary of the disk.

\begin{figure}[h]
\includegraphics{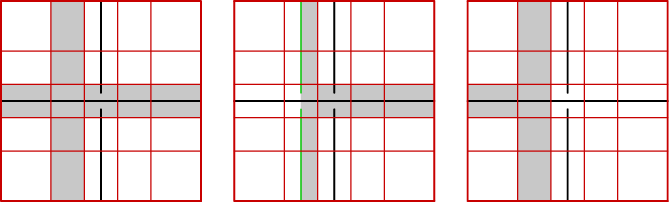}
\caption{The three disk components for the $X_3$-piece.}\label{fig:DinX3}
\end{figure}

As an example, consider a 0-handle $B$ with model $X_3$. Then $B\cap\partial M_i''$ contains three components, where the top one and the bottom one are horizontal disks and the middle one consists of two horizontal disks and one vertical disk. Assume that $B$ has the picture in Figure~\ref{fig:case1}, then the three components can be shown as in the left, right, and middle pictures in Figure~\ref{fig:DinX3}, respectively. In each of them, there is at most one arc, whose endpoints come from the arcs in adjacent 1-handles. Then we can assume that the possible arcs in the middle disk and the bottom disk have disjoint projections, because in the boundary of the projection of $B$, the arcs have unlinked pairs of endpoints. Now, if the number of crossings is minimal, then each pair of the arcs in $B$ gives at most one crossing. Since together with $K\cap B$, there are at most four arcs in $B$, the minimum number of crossings is at most $5$.

The cases of other models given in Table~\ref{tab:0-Model} can be analyzed in a similar way, or can be obtained just by enumeration. We leave the details to the readers.

\begin{table}[p]
\caption{Upper bounds on the minimal numbers of crossings.}\label{tab:0-Model}
\centerline{\includegraphics{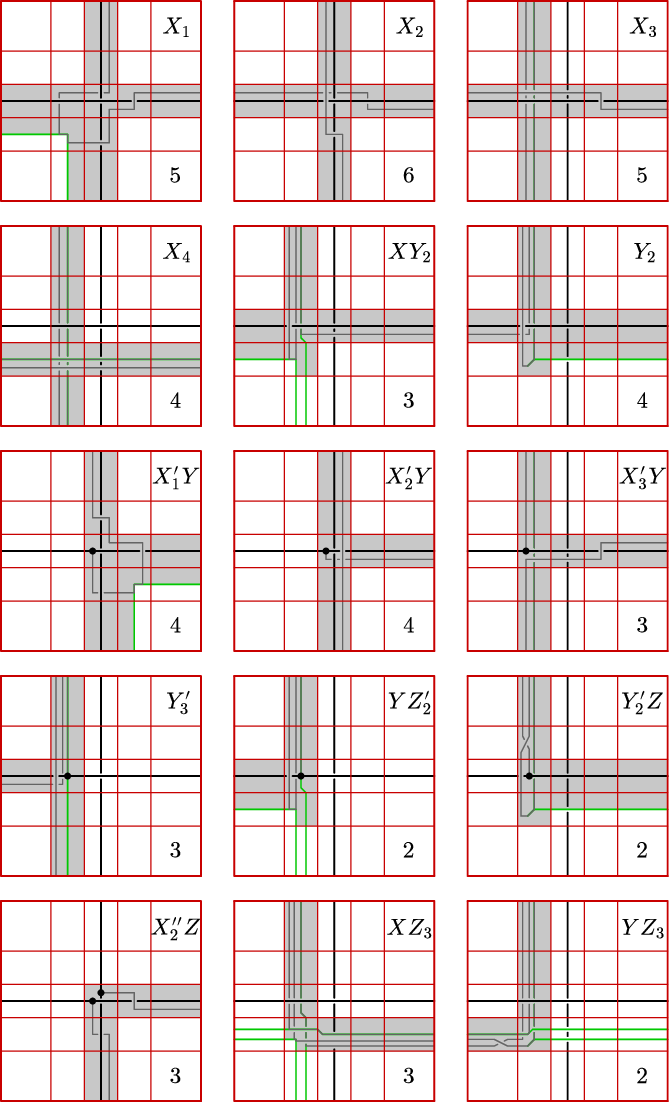}}
\end{table}

\noindent {\bf Step 4.} We show the required upper bound on $c(K_1)+\cdots+c(K_n)$. By the step, we can finish the proof of Theorem~\ref{thm:main}.

Now, for each $1\leq i\leq n$, we have modified $A_i''\subset M_i''$ and found some simple $\mu_i$ so that $p(A_i'')\cup p(\mu_i)$ gives a specific diagram of $K_i$ in $S_0$. We denote the diagram by $D_i$. Here, note that if we replaced two arcs by one as shown in Figure~\ref{fig:subarc}, then we actually first picked some $\mu_{i,j}$ and then applied an isotopy.

By Step~1 and Step~2, in any $R_f$ or $R_e$, the part of $D_i$ has no crossings. Hence, each crossing of $D_i$ lies in some $R_v$. For $1\leq i\leq n$ and a crossing $v$ in $D$, let $c(D_i)$ and $c_v(D_i)$ be the numbers of crossings in $D_i$ and $D_i\cap R_v$, respectively. Then, we have $c(D_i)=\sum_v c_v(D_i)$. So we have the following inequality
\[\sum_i c(K_i)\leq\sum_i c(D_i)=\sum_i\sum_v c_v(D_i)=\sum_v\sum_i c_v(D_i).\]
Below we first show that $\sum_i c_v(D_i)\leq 16$ holds for each crossing $v$ in $D$.

Let $D_v$ be the union of $D_i\cap R_v$, $1\leq i\leq n$. We only need to prove that $D_v$ has at most $16$ crossings if we pick all those $\mu_i$ suitably. Note that the crossings of $D_v$ come from the arcs in $\mu_i\cap B$ and $K\cap B$, where $1\leq i\leq n$, and $B$ is an $X$-piece in the 0-handle $[-1,1]\times R_v$ of $\mathcal{H}_D$. By Theorem~\ref{thm:decompose}, the $X$-pieces in the 0-handle of $\mathcal{H}_D$ are disjoint from each other, and we know that which kinds of $X$-pieces could appear simultaneously. Each $X$-piece either lies in a 1-handle of some $\mathcal{H}_{X,i}$ or the $X$-piece itself is a 0-handle of some $\mathcal{H}_{X,i}$. So we have two cases as follows.

Case (a). The $X$-piece $B$ lies in a 1-handle of $\mathcal{H}_{X,i}$.

Now $B$ has one of the five models $YZ_1$, $Z_1$, $Z_2$, $Z_3$, and $\bar{Z}_3$. If it has model $\bar{Z}_3$, then $\mu_i\cap B$ contains at most three arcs, and $K\cap B=\emptyset$. The projection of $\mu_i\cap B$ lies near a corner of $R_v$. Then, because the part of $\mu_i$ in any 1-handle of $\mathcal{H}_{X,i}$ has some standard form, the image arcs in $R_v$ meet no crossings of $D_v$. Hence, we can ignore $\bar{Z}_3$-pieces; see also Remark~\ref{rem:Z3}. If $B$ has model $YZ_1$ or $Z_1$, then $\mu_i\cap B=\emptyset$ and $K\cap B$ is one arc. In the case of $Z_2$ or $Z_3$, $\mu_i\cap B$ and $K\cap B$ together provide at most three arcs, which have parallel projections.

Case (b). The $X$-piece $B$ is a 0-handle of $\mathcal{H}_{X,i}$.

Now $B$ has one of the $15$ models listed in Table~\ref{tab:0-Model}. In each case, this table gives an upper bound on the minimum numbers of crossings given by the projections of $\mu_i\cap B$ and $K\cap B$. Note that $[-1,1]\times R_v$ contains at most two such $X$-pieces, by Theorem~\ref{thm:decompose}. If there are two such $X$-pieces, denoted by $B_1$ and $B_2$, then each $B_l$ must have model $Y_2$, $YZ_3$, or $Y_2'Z$. By the construction of the $X$-pieces, there is a twisted disk in $T_{\pm}$ that lies between $B_1$ and $B_2$. It gives a figure ``$\textsf{Y}$'' in $R_v$. Then we can require that those arcs in each $B_l$ have images disjoint from the figure ``$\textsf{Y}$'' except the possible arc $K\cap B_l$. We can further require that between those arcs in $B_1$ and those arcs in $B_2$, only $K\cap B_1$ and $K\cap B_2$ can give a crossing.

Then, according to Table~\ref{tab:models} and Table~\ref{tab:effect}, one can count the number of crossings between all possible arcs $\mu_i\cap B$ and $K\cap B$ described in Case~(a) and Case~(b). If an $X$-piece varies as in Table~\ref{tab:effect}, then the number of crossings will become smaller. Then it suffices to consider Table~\ref{tab:models}. If there are two $X$-pieces as in Case~(b), then there are at most $11$ crossings. The crossings are given by Case~2 in Table~\ref{tab:models} when there are two $YZ_3$-pieces and two $Z_1$-pieces. If there exists only one $X$-piece as in Case~(b), then there are also at most $11$ crossings. Now the crossings are given by Case~1 in Table~\ref{tab:models} when there are one $X_4$-piece and two $Z_1$-pieces. So only if there are two $Z_3$-pieces and two $Z_1$-pieces, we can obtain the upper bound $16$.

Since $\sum_i c_v(D_i)=16$ cannot hold for all $v$, we finish the proof of Theorem~\ref{thm:main}.


\bibliographystyle{amsalpha}

\begin{thebibliography}{Th2}

\bibitem[Di]{Di}
Y. Diao, {\it The additivity of crossing numbers}, J. Knot Theory Ramifications 13 (2004), no. 7, 857-866.

\bibitem[He]{He}
J. Hempel, {\it 3-manifolds}, Ann. of Math. Studies, No. 86. Princeton University Press, Princeton, N.J.; University of Tokyo Press, Tokyo, 1976.

\bibitem[It]{It}
T. Ito, {\it A quantitative Birman-Menasco finiteness theorem and its application to crossing number}, J. Topol. 15 (2022), no. 4, 1794-1806.

\bibitem[Ka]{Ka}
L. H. Kauffman, {\it State models and the Jones polynomial}, Topology 26 (1987), no. 3, 395-407.

\bibitem[Ki]{Ki}
R. Kirby, {\it Problems in low-dimensional topology}, Geometric topology (Athens, GA, 1993), 35-473, AMS/IP Stud. Adv. Math., 2.2, American Mathematical Society, Providence, RI, 1997.

\bibitem[KL]{KL}
E. Kalfagianni, C. R. S. Lee, {\it Jones diameter and crossing number of knots}, Adv. Math. 417 (2023), Paper No. 108937, 35 pp.

\bibitem[KM]{KM}
E. Kalfagianni, R. Mcconkey, {\it Crossing numbers of cable knots}, Bull. Lond. Math. Soc. 56 (2024), no. 11, 3400-3411.

\bibitem[La1]{La}
M. Lackenby, {\it The crossing number of composite knots}, J. Topol. 2 (2009), no. 4, 747-768.

\bibitem[La2]{La2}
M. Lackenby, {\it The crossing number of satellite knots}, Algebr. Geom. Topol. 14 (2014), no. 4, 2379-2409.

\bibitem[Li]{Li}
W. B. R. Lickorish, {\it An introduction to knot theory}, Graduate Texts in Mathematics, 175. Springer-Verlag, New York, 1997.

\bibitem[LT]{LT}
W. B. R. Lickorish, M. B. Thistlethwaite, {\it Some links with nontrivial polynomials and their crossing-numbers}, Comment. Math. Helv. 63 (1988), no. 4, 527-539.

\bibitem[Mo]{Mo}
E. E. Moise, {\it Geometric topology in dimensions 2 and 3}, Graduate Texts in Mathematics, Vol. 47. Springer-Verlag, New York-Heidelberg, 1977.

\bibitem[Mu]{Mu}
K. Murasugi, {\it Jones polynomials and classical conjectures in knot theory}, Topology 26 (1987), no. 2, 187-194.

\bibitem[Ro]{Ro}
D. Rolfsen, {\it Knots and links}, Mathematics Lecture Series, No. 7. Publish or Perish, Inc., Berkeley, Calif., 1976.

\bibitem[Th1]{Th1}
M. B. Thistlethwaite, {\it A spanning tree expansion of the Jones polynomial}, Topology 26 (1987), no. 3, 297-309.

\bibitem[Th2]{Th2}
M. B. Thistlethwaite, {\it On the Kauffman polynomial of an adequate link}, Invent. Math. 93 (1988), no. 2, 285-296.
\end{thebibliography}

\end{document}